\documentclass[a4paper,12pt,oneside,reqno]{amsart}
\usepackage{hyperref}
 \usepackage{amscd,amsfonts,amssymb,amsmath,amsthm,bbm,calc}
 \usepackage{enumerate,epsf,epsfig,enumerate}
\usepackage{graphicx}
\usepackage{version,latexsym}
\usepackage{fancyhdr}
\usepackage{indentfirst}
\usepackage{newlfont}
\usepackage{latexsym}
\usepackage[headinclude,DIV13]{typearea}
\usepackage{psfrag}

 \usepackage[english]{babel}
 \selectlanguage{english}

\areaset{15.1cm}{25.0cm}
\parskip 0pt plus .5pt

 %%%%%%%%%  LABELS FOR THEOREMS AND EQUATIONS SEPARATELY %%%%%%%%

 \makeatletter
 \@addtoreset{equation}{section}
 \makeatother

 \makeatletter
 \@addtoreset{enunciato}{section}
 \makeatother

 \newcounter{enunciato}[section]

%%%%%%%%%%%%%%%%%%%%%%%%%%%%%%%%%%%%%%%%%%%%%%%%%%%%%%%%%%%%%%%%%%%%%%

 \newtheorem{ittheorem}{Theorem}
 \newtheorem{itcorollary}{Corollary}
 \newtheorem{itlemma}{Lemma}
 \newtheorem{itproposition}{Proposition}
 \newtheorem{itdefinition}{Definition}
 \newtheorem{itremark}{Remark}
 \newtheorem{itclaim}{Claim}

 \newenvironment{theorem}{\addtocounter{enunciato}{1}
 \begin{ittheorem}}{\end{ittheorem}}

 \newenvironment{corollary}{\addtocounter{enunciato}{1}
 \begin{itcorollary}}{\end{itcorollary}}

 \newenvironment{lemma}{\addtocounter{enunciato}{1}
 \begin{itlemma}}{\end{itlemma}}

 \newenvironment{proposition}{\addtocounter{enunciato}{1}
 \begin{itproposition}}{\end{itproposition}}

 \newenvironment{definition}{\addtocounter{enunciato}{1}
 \begin{itdefinition}}{\end{itdefinition}}

 \newenvironment{remark}{\addtocounter{enunciato}{1}
 \begin{itremark}}{\end{itremark}}

 \newenvironment{claim}{\addtocounter{enunciato}{1}
 \begin{itclaim}}{\end{itclaim}}

\newcommand{\be}{\begin{equation}}
 \newcommand{\ee}{\end{equation}}

\newcommand{\bea}{\begin{eqnarray}}
\newcommand{\eea}{\end{eqnarray}}

 \newcommand{\bl}[1]{\begin{lemma}\label{#1}}
 \newcommand{\el}{\end{lemma}}

 \newcommand{\br}[1]{\begin{remark}\label{#1}}
 \newcommand{\er}{\end{remark}}

 \newcommand{\bt}[1]{\begin{theorem}\label{#1}}
 \newcommand{\et}{\end{theorem}}

 \newcommand{\bd}[1]{\begin{definition}\label{#1}}
 \newcommand{\ed}{\end{definition}}

 \newcommand{\bcl}[1]{\begin{claim}\label{#1}}
 \newcommand{\ecl}{\end{claim}}

 \newcommand{\bp}[1]{\begin{proposition}\label{#1}}
 \newcommand{\ep}{\end{proposition}}

 \newcommand{\bc}[1]{\begin{corollary}\label{#1}}
 \newcommand{\ec}{\end{corollary}}

 \newcommand{\bpr}{\begin{proof}}
 \newcommand{\epr}{\end{proof}}

 \newcommand{\bi}{\begin{itemize}}
 \newcommand{\ei}{\end{itemize}}

 \newcommand{\ben}{\begin{enumerate}}
 \newcommand{\een}{\end{enumerate}}

 %%%%%%%%% FOR FIGURES %%%%%%%%%%%%%%%%%%%%%%

 \def\botcaption#1#2{\medskip\centerline{{\scshape #1.}\kern8pt   %????
 {\rm #2}}\bigskip}

%%%%%%%%%%%%%%%%%%%%%%%%%%%%%%%%%%%%%%%%%%%%%%%%%%%%%%%%%%%%%%%%%%%%%%%%%%%%%%%%%%%%%%%%%%%%%%%%%%%%%%%%%%
%%%%%%%%%%%%%%%%%%%%%%%%%%%%%%%%%%%%%%%%%%%%%%%%%%%%%%%%%%%%%%%%%%%%%%%%%%%%%%%%%%%%%%%%%%%%%%%%%%%%%%%%%
 %%%%%%% ABBREVIATIONS %%%%%%%%%%%%%%%%%%%
 \def\no{\noindent}
 \def \1{\mathbbm{1}}
 \def \ba {\begin{array}}
 \def \ea {\end{array}}

\def \D {{\mathbb D}}

 \def \Z {{\mathbb Z}}
 \def \R {{\mathbb R}}
 
 \def \N {{\mathbb N}}
 \def \P {{\mathbb P}}
 
 \def \E {{\mathbb E}}
\def \H {{\mathbb H}}

 \def \cC {{\cal C}}
 \def \cD {{\cal D}}
 
 \def \cF {{\cal F}}

 \def \cL {{\cal L}}

\def \cK {{\cal K}}

 \def \D {{\Delta}}

\def \a' {{\'{a}}}
\def \i' {{\'{\i}}}
\def \o' {{\'{o}}}
\def \é {{\'{e}}}

 \def \a {{\alpha}}
 \def \b {{\beta}}
 
 \def \d {{\delta}}
 \def \e {{\epsilon}}

  \def \s {{\sigma}}

 \def \m {{\mu}}
 \def \t {{\tau}}
 \def \th {{\theta}}
  \def \l {{\lambda}}
  \def \o {{\omega}}
 \def \th {{\theta}}

\def \n {{\eta}}

\def\TH(#1){\label{#1}}       \def\thv(#1){\ref{#1}}  %% per chiamare teoremi ed equazioni
\def\Eq(#1){\label{#1}}       \def\eqv(#1){(\ref{#1})} %% e citarli nel testo

\def\cg{c_{gap}}

\def\var{\hbox{\rm Var}}

\def\pv{\hbox{P}_{\mbox{var}}}
\def\v{\var}
\def\sfrac#1#2{{\textstyle{#1\over #2}}}

\def\sfrac#1#2{{\textstyle{#1\over #2}}}

 %%%%%%%%% TITLE PAGE %%%%%%%%%%%%%%%%%%%%%%%%%%

 \begin{document}

\title[Glauber dynamics on nonamenable graphs: boundary conditions and mixing time]
{Glauber dynamics on nonamenable graphs:\\
boundary conditions and mixing time}
 \author[A. Bianchi]{Alessandra Bianchi}
\address{A. Bianchi \\
Weierstrass Institute for Applied Analysis and Stochastics\\
Mohrenstrasse 39\\ 10117 Berlin, Germany}
\email{bianchi@wias-berlin.de}

\subjclass[2000]{82C20, 60K35, 82B20, 82C80.} \keywords{Ising model,
Glauber dynamics, nonamenable graphs, spectral gap,
mixing time}

\date{\today}

 \begin{abstract}
We study the stochastic Ising model on finite graphs with $n$ vertices
and bounded degree and analyze the effect of boundary conditions on the mixing time.
We show that for all low enough temperatures, the spectral gap
of the dynamics with $(+)$-boundary condition on a
 class of nonamenable graphs, is strictly positive  uniformly in  $n$.
This implies that the mixing time grows at most linearly in $n$.
The class of graphs we consider includes hyperbolic graphs with sufficiently high degree,
where the best upper bound on the mixing time of the free boundary dynamics
is  polynomial in $n$,
with exponent growing with the inverse temperature.
In addition, we construct a graph in this class,
for which the mixing time in the free boundary case
is exponentially large in $n$. This provides a first example
where the mixing time jumps from exponential to
linear in $n$ while passing from free to $(+)$-boundary condition.
These results extend  the analysis of Martinelli, Sinclair and Weitz
to a wider class of nonamenable graphs.

 \end{abstract}

 \maketitle

 %%%%%%%%%%%% SECTION 1 %%%%%%%%%%%%%%%%%%%%%%%%%%%%%%%%%%%%%%%%%%%%%%%%%%%%%
 \section{Introduction}
The goal of this paper is to analyze the effect of boundary conditions
on the Glauber dynamics for the Ising model on  nonamenable graphs.
We will focus on a particular class of graphs which includes,
among others, hyperbolic graphs
with sufficiently high degree. Before discussing the
motivation and the formulation of the results we shall give some
necessary definitions.

Given a  finite graph $G=(V,E)$, we consider spin configurations
$\s=\{\s_x\}_{x\in V}$ which consist of an assignment of $\pm
1$-values to each vertex of $V$. In the Ising model the probability
of finding the system in a configuration $\s\in \{\pm 1\}^V\equiv
\Omega_G$ is given by the Gibbs measure
\be\label{prima}\m_{G}(\s)\,=\,(Z_G)^{-1}\exp{\left(\b \sum_{(xy)\in
E}\s_x\s_y +\b h\sum_{x\in V}\s_x\, \right)}\, ,\ee
 where $Z_G$ is a normalizing constant, and $\b$ and $h$
 are parameters of the model corresponding,
respectively, to the inverse temperature and to the external field.
Boundary conditions can also be taken into account by fixing the spin
values at some specified boundary vertices of $G$. The term free
boundary is used to indicate that no boundary is specified.

The Glauber dynamics for the Ising model on $G$ is a (discrete or
continuous time) Markov chain on the set of spin configurations
 $\Omega_G$, reversible with respect to the Gibbs measure $\m_G$. The
corresponding generator is given by \be (\cL f) (\s)\,=\, \sum_{x\in
V}c_x(\s)[f(\s^{x})-f(\s)]\,,\ee
 where $\s^{x}$ is the configuration obtained from $\s$ by a spin
flip at the vertex $x$, and $c_x(\s)$ is the jump rate from $\s$ to
$\s^x$.

Beyond of being the basis of Markov chain Monte Carlo algorithms,
the Glauber dynamics  provides a plausible model for the evolution
of the underlying physical system toward the equilibrium. In both
contexts, a central question is to determine the \emph{mixing time},
i.e. the number of steps until the dynamics is close to its
stationary measure.

In the past decades a lot of efforts have been devoted to the study
of the dynamics for the classical Ising model, namely when $G=G_n$
is a cube of size $n$ in the finite-dimensional lattice $\Z^d$, and
a remarkable connection between the equilibrium and the dynamical
phenomena has been pointed out. As an example, on finite $n$-vertex
cubes with free boundary in $\Z^d$, when $h=0$ and $\b$ is smaller
than the critical value $\b_c$ (one-phase region), the mixing time
is of order $\log n$, while for $\b> \b_c$ (phase coexistence
region) it is $\exp(n^{(d-1)/d})$ (\cite{SZ, MO1, MO2, Mar}).

More recently, an increasing attention has been devoted to the study
of spin systems on graphs other than regular lattices. Among the
various motivations which are beyond this new surge of interest, we
stress that many new phenomena only appear when one considers graphs
different from the Euclidean lattices,  thus revealing the presence
of an interplay between the geometry of the graph and the behavior
of statistical systems.

Here we are interested in the problem of the \emph{influence of
boundary conditions} on the mixing time. It has been conjectured
that in the presence of (+)-boundary condition on regular boxes of
the lattice $\Z^d$,  the mixing time should remain at most
polynomial in $n$ for all temperatures rather than
$\exp(n^{(d-1)/d})$ \cite{FH}. But even if some results supporting
this conjecture have been achieved \cite{BM}, a formal proof for the
dynamics on the lattice is still missing.

 However a different scenario can appear if one
replaces the classical lattice structure with different graphs. The
first rigorous result along this direction, has been  obtained
recently by Martinelli, Sinclair and Weitz \cite{MSW} when studying
the Glauber dynamics for the Ising model on regular trees.
With this graph setting and in presence of $(+)$-boundary condition,
they proved in fact that the mixing time remains of order $\log n$
also at low temperatures (phase coexistence region), in contrast to
the free boundary case where it grows polynomially in $n$
\cite{KMP, BKMP}.

In this paper we extend the above result to a class of
nonamenable graphs which includes trees, but also
hyperbolic graphs with sufficiently high degree,
and some suitable constructed expanders.
Specifically, we consider the dynamics on an $n$-vertex ball
of the  graph with $(+)$-boundary condition, and prove
that the spectral gap is $\Omega(1)$ (i.e.  bounded away from
zero uniformly in $n$) for all low enough temperatures and zero
external field. This implies, by classical argument (see, e.g.,
\cite{Sa}), an upper bound of order $n$ on the mixing time.
Notice that this result is in contrast with the behavior
of the free boundary  dynamics on  hyperbolic graphs,
for which the spectral gap is decreasing in $n$ for all
low temperatures, and bounded below by $n^{-\a(\b)}$, with exponent
$\a(\b)$ arbitrarily increasing with $\b$ \cite{KMP, BKMP}.
Moreover, we give an example of an expander, in the above
class of graphs, for which
we prove that the mixing time of the free boundary  dynamics
is at least exponentially large in $n$.
This provides a first rigorous example
of graph where the mixing time shrinks from exponential
to linear in $n$ while passing from free to $(+)$-boundary condition.

We remark that what we believe to be determinant for the result obtained
in \cite{MSW} for the dynamics on trees, is in fact the nonamenability of the graph.
On the other hand, the possible presence of cycles, which are absent on trees,
makes the structure of some nonamenable graphs more similar to classical lattices.
Our results show that cycles are not an obstacle for proving the influence
of boundary conditions on the mixing time.

The work is organized as follows. In section 2 we give some basic
definitions and state the main results. In section 3 we analyze
the system at equilibrium and prove a mixing property of the
plus phase. Then, in section 4, we deduce from this property a lower bound
for the spectral gap of the dynamics and conclude the proof of our main
result. Finally, in section 5, we give an example of a graph satisfying the
hypothesis of the main theorem, and prove for it an exponential lower bound
on  the spectral gap for the free boundary dynamics.

\section{The model: definitions and main result}
\subsection{Graph setting}\label{subsec:hgnotation}
Before describing the class of graphs in which we are interested,
let us fix some notation and recall a few definitions concerning the graph structure.

 Let $G=(V,E)$ be a general (finite or infinite) graph, where $V$ denotes the vertex set and
$E$ the edge set. We will always implicitly assume that $G$ is connected.
The \emph{graph distance} between two vertices
$x,y \in V$ is  defined as the length of the shortest path from $x$
to $y$ and it is denoted by $d\,(x,y)$. If $x$ and $y$ are at
distance one, i.e. if they are
neighbors, we write $x\sim y$.
The set of neighbors of $x$ is denoted by $N_x$, and $|N_x|$
is called the \emph{degree} of $x$. \\
 For a given subset $S\subset V$, let $E(S)$ be the set of all
edges in $E$ which have both their end vertices in $S$ and
define the \emph{induced subgraph} on $S$ by $G(S):=(S, E(S))$. When it
creates no confusion, we will identify $G(S)$ with its vertex set
$S$.

 For $S\subset V$ let us introduce the \emph{ vertex boundary}
of $S$
$$ \partial_V S\,=\,\{x\in V\setminus
S \,:\, \exists y\in S\,\,\mbox{s.t.}\, x\sim y \}\,$$
 and the
\emph{edge boundary} of $S$
$$\partial_E S\,=\,\{e=(x,y)\in E
\,\,\mbox{s.t.}\,\, x\in S\,,\, y\in V\setminus S \}\,.$$
If $G=(V,E)$ is an infinite, locally finite graph, we can
define the \emph{edge isoperimetric constant} of $G$ (also called
\emph{Cheeger constant}) by
 \be\label{isoperimetrica2}
 i_e(G)\,:=\,\inf \left\{\frac{|\partial_E
(S)|}{|S|}\,; \, S\subset
V\mbox{ finite }\right\}\,.
\ee
\begin{definition}\label{amenabilita}
 An infinite graph $G=(V,E)$ is amenable if
 its edge isoperimetric constant  is zero, i.e.
if for every $\e>0$ there is a finite set of vertices $S$ such that
$|\partial_E S|<\e|S|$. Otherwise $G$ is nonamenable.
\end{definition}
Roughly speaking, a nonamenable graph is such that
the boundary of every subgraph is of comparable size to its volume.
 A typical example of amenable graph is the lattice $\Z^d$, while
 one can easily show that  regular trees,
 with branching number bigger than two, are nonamenable.
We emphasize that nonamenability seems to be strongly related to the qualitative
 behavior of models in statistical mechanics.
 See, e.g., \cite{ JS, Ly1, Ly2, Sch} for results concerning the
Ising and the Potts models, and \cite{BS, BS2, HJL, Jo} for
percolation and random cluster models.
\vspace{5mm}

In this paper we focus on a class of nonamenable graphs,
that we call \emph{growing graphs}, defined as follows.
Given an infinite graph $G=(V,E)$ and a vertex $o\in V$,
let  $B_r(o)$ denote  the ball centered in $o$
and with radius $r\in\N$ with respect to the graph distance,
namely the finite subgraph induced on $\{x\in V : d\,(o,x)\leq r\}$,
and let $L_r(o):=\{x\in V:\,d(x,o)=r\}= \partial_V B_{r-1}(o)$.
\begin{definition}\label{def:growing}
An infinite graph $G=(V,E)$ is growing with parameter $g>0$ and root $o\in V$, if
\be\label{growing}
\min_{ x\in L_{r}(o),\, r\in\N
}\left\{
\left|N_x\cap L_{r+1}(o)\right|-\left|N_x \cap B_r(o)\right|
\right\}= g\,.
\ee
We call $G$ a  $(g,o)$-growing graph.
\end{definition}
It is easy to prove that a growing graph in the sense of Definition
\ref{def:growing} is also nonamenable. The simplest example
of growing graph with parameter $g$, is an infinite tree
with minimal vertex degree equal to $g+2$, where the growing property
is satisfied for every choice of the root on the vertex set.
On the other hand, there are many examples of growing graphs
which are not cycle-free. Between them we mention
 hyperbolic graphs,  that we will prove to be growing
provided that the vertex-degree is sufficiently high.

Hyperbolic graphs are a family
of infinite planar graphs characterized by a cycle periodic
 structure. They can be briefly described as follows
 (for their detailed construction see, e.g., \cite{Mag},
 or Section 2 of ref. \cite{RNO}). Consider a planar  graph
 in which each vertex  has the same degree, denoted by $v$, and each face (or
\emph{tile}) is equilateral with constant number of sides denoted by
$s$.  If the parameters $v$ and $s$ satisfy the relation
$(v-2)(s-2)>4$, then the graph can be embedded in the hyperbolic
plane $\H^2$ and  it is called \emph{hyperbolic graph}
with parameters $v$ and $s$. It will be
denoted by $\H(v,s)$.  The typical representation of hyperbolic
graphs  make use of the Poincar\'{e} disc that is in
bi-univocal correspondence with $\H^2$ (see Fig. \ref{fig:esgrafo}).

\begin{figure}[htbp]
\begin{center}
\includegraphics[width=6cm]{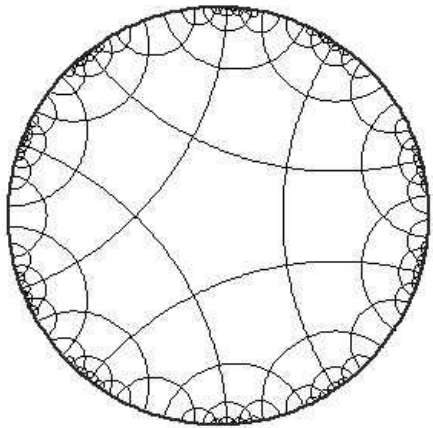}
\end{center}
\caption{{\small The hyperbolic graph $\H(4,5)$ in the Poincar\'{e}
disc representation.}}\label{fig:esgrafo}
\end{figure}

Hyperbolic graphs are nonamenable, with
edge isoperimetric constant  explicitly
computed in \cite{HJL} as a function of $v$ and $s$.
Moreover,  the following  holds:
\begin{lemma}\label{lemma:hyper}
For all couples $(v,s)$ such that $s\geq 4$ and $v\geq 5$,
or $s=3$ and $v\geq 9$,  $\H(v,s)$ is  a $(g,o)$-growing graph
for every vertex $o\in V$ and with parameter $g=g(v,s)$.
\end{lemma}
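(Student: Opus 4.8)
The plan is to read the growing parameter off the breadth‑first‑search (BFS) structure of $\H(v,s)$ around $o$. Since $\H(v,s)$ is vertex‑transitive it suffices to argue for one root $o$, and $g(v,s)$ will be the value of the minimum in \eqref{growing}, which we must show is positive. For $x\in L_r(o)$ with $r\ge1$ set
\[
b(x)=|N_x\cap L_{r-1}(o)|,\qquad \ell(x)=|N_x\cap L_r(o)|,\qquad f(x)=|N_x\cap L_{r+1}(o)|,
\]
so that $b(x)+\ell(x)+f(x)=|N_x|=v$ and the quantity between braces in \eqref{growing} equals $f(x)-b(x)-\ell(x)=2f(x)-v$ (and equals $v$ when $x=o$). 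Thus the lemma is equivalent to the claim that $f(x)>v/2$ for every vertex $x$; granting this, $g(v,s):=\min_{r,\,x\in L_r(o)}\bigl(2f(x)-v\bigr)$ is a positive integer and $\H(v,s)$ is $(g(v,s),o)$‑growing.

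The first step is the elementary planar combinatorics of the BFS layers. Using that $\H(v,s)$ is a planar tiling and the elementary fact that neighbours lie in consecutive layers, one checks that around any $x\in L_r(o)$ the back‑neighbours occupy a contiguous arc of the link of $x$ with $b(x)\le2$, the forward‑neighbours occupy a contiguous arc as well, and $\ell(x)\le2$. Hence $b(x)+\ell(x)\le4$, so $f(x)\ge v-4$ and $2f(x)-v\ge v-8$; this already gives the lemma, with $g=v-8\ge1$, whenever $v\ge9$, and in particular settles the case $s=3$, $v\ge9$.

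The remaining and main point is to sharpen this to $b(x)+\ell(x)\le2$ when $s\ge4$ --- equivalently, to rule out the three local patterns $(b(x),\ell(x))\in\{(1,2),(2,1),(2,2)\}$ --- which yields $f(x)\ge v-2$ and $2f(x)-v\ge v-4\ge1$ for all $v\ge5$ and so finishes the remaining cases. This is exactly where the hypothesis $s\ge4$ enters. A lateral edge $xy$ with $x,y\in L_r(o)$ lies on two faces, each of which, since $s\ge4$, carries at least two further vertices between $x$ and $y$; following these faces together with geodesics from $x$ and from $y$ down to $o$, one verifies that such an $x$ has exactly one back‑neighbour and no second lateral neighbour, since producing a second back‑ or lateral neighbour of $x$ would force a face of length strictly less than $s$. (For $s=3$ the triangle bounding a lateral edge identifies the two competing geodesics and this obstruction disappears, which is why the triangular tilings genuinely need $v\ge9$.) Concretely this reduces to a finite local inspection of the faces incident to $x$ and to a putative lateral neighbour of $x$, using that the girth of $\H(v,s)$ is $s$ together with the layer property.

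I expect this last step --- the $s\ge4$ exclusion of a second back‑ or lateral neighbour at a vertex that already carries one lateral neighbour --- to be the only genuine difficulty. The reduction to $f(x)>v/2$, the planar bounds $b(x)\le2$ and $\ell(x)\le2$, the reduction to a single root via vertex‑transitivity, and the arithmetic producing the explicit $g(v,s)\ge1$ are all routine; one may additionally note that the self‑similar structure of $\H(v,s)$ makes only finitely many local BFS types occur, so the minimum in \eqref{growing} is in fact attained.
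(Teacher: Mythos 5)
Your reduction is correct and matches the paper's: writing $f(x)=|N_x\cap L_{r+1}|$, $\ell(x)=|N_x\cap L_r|$, $b(x)=|N_x\cap L_{r-1}|$, the quantity in \eqv(growing) is $2f(x)-v$, and the lemma reduces to $b(x)+\ell(x)\leq 4$ in general (giving $g=v-8$ for $s=3$, $v\geq 9$) and $b(x)+\ell(x)\leq 2$ for $s\geq 4$ (giving $g=v-4$ for $v\geq 5$). The gap is that you prove neither of these bounds. The bounds $b(x)\leq 2$ and $\ell(x)\leq 2$, which you describe as routine planar combinatorics, are not local facts: the paper establishes them (properties (i)--(iii)) by induction on the level $r$, and the induction leans essentially on the \emph{one-end} property of $\H(v,s)$ --- e.g.\ a vertex of $L_r$ with no forward neighbour, or with three lateral neighbours, would produce a finite separating path splitting the complement of a ball into two infinite components. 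Your proposal never invokes the number of ends, and without some such global input these bounds do not follow from planarity and the layer structure alone.

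The same problem recurs, more seriously, at the step you yourself flag as the crux. Your proposed mechanism for $s\geq 4$ --- that a second back- or lateral neighbour at a vertex already carrying a lateral neighbour ``would force a face of length strictly less than $s$'' --- is not how the contradiction actually arises, and a purely local girth inspection around $x$ does not produce a short face: the face of the tiling lying between two consecutive neighbours of $x$ in its link simply has $s$ sides, and nothing local prevents it from closing up at distance. What the paper does instead is (a) for even $s$, an induction over levels showing $\ell(x)=0$ via a \emph{parity} argument (the face below a lateral edge would have to have an odd number of sides); and (b) for odd $s\geq 5$ (and likewise for the bound $b(x)\leq 2$), a \emph{degree count at a parent vertex} $y$: the forbidden configuration forces $y$ to have so few admissible neighbours that $|N_y|<v$, using again planarity, one-end, and the inductive hypothesis on the level of $y$. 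So while your skeleton and arithmetic are right and you have correctly located where $s\geq 4$ versus $s=3$ matters, the combinatorial core of the lemma is missing, and the route you sketch for it would not close.
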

The proof of this Lemma is postponed to Section 5, where we will
also construct a growing graph that will serve us as
further example of influence
of boundary conditions on the mixing time.

Let us stress, that due to the possible presence of cycles in  a growing graph,
a careful analysis of the correlations between spins will be required.
This is actually the main distinction between our proof and the similar work on trees
\cite{MSW}.

\subsection{Ising model on nonamenable graphs}\label{sec:ising}
The Ising model on   nonamenable graphs
has been investigated in many papers (see, e.g, \cite{Ly2} for a survey).
A  general result, concerning the uniqueness/non-uniqueness
phase transition of the model, is the following \cite{JS}:
\begin{theorem}[Jonasson and Steif]\label{th:JS}
If $G$ is a connected nonamenable graph with bounded degree,
then there exists an inverse temperature $\b_0>0$, depending on the graph,
such that for all $\b\geq\b_0$ there exists an interval
of $h$ where $G$ exhibits a phase transition.
\end{theorem}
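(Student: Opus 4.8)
The plan is to establish non-uniqueness of the infinite-volume Gibbs measure by a Peierls-type argument tailored to the nonamenable, bounded-degree setting: since on such a graph the edge boundary of any finite region is at least a fixed multiple of its volume, a sufficiently small external field cannot wipe out either pure phase, and a nondegenerate interval of fields with coexistence appears. Fix an arbitrary vertex $o\in V$ and let $\m^+_{B_r}$, $\m^-_{B_r}$ be the Gibbs measures on the ball $B_r=B_r(o)$ with all-$(+)$ and all-$(-)$ boundary condition. By the FKG inequality these are monotone in $r$, so the weak limits $\m^+:=\lim_r\m^+_{B_r}$ and $\m^-:=\lim_r\m^-_{B_r}$ exist and are the stochastically largest and smallest Gibbs states; hence the model has a unique Gibbs measure at $(\b,h)$ iff $\m^+=\m^-$, and since $\m^+\succeq\m^-$ it suffices to exhibit $\b_0$ and a nondegenerate interval $I\ni 0$ of values of $h$ such that $\E_{\m^+}[\s_o]>0>\E_{\m^-}[\s_o]$ for all $\b\ge\b_0$ and $h\in I$.

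The heart of the matter is a bound on $\m^+_{B_r}(\s_o=-1)$ uniform in $r$. On the event $\s_o=-1$, let $S$ be the connected component of $o$ in the subgraph induced by $\{x:\s_x=-1\}$; by maximality every edge of $\partial_E S$ joins a vertex of $S$ (spin $-1$) to a vertex of spin $+1$, including the edges to the $(+)$-boundary. Flipping all spins in $S$ leaves interactions internal to $S$ unchanged, raises each of the $|\partial_E S|$ boundary edges by $2$, and raises the field term by $2|S|$, so it multiplies the Gibbs weight by exactly $\exp\!\big(2\b|\partial_E S|+2\b h|S|\big)$; since this ``flip $S$'' map is injective on configurations, summing over configurations whose minus-cluster of $o$ equals $S$ gives $\m^+_{B_r}(\text{minus-cluster of }o\text{ is }S)\le\exp(-2\b|\partial_E S|-2\b h|S|)$. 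Summing over $S$ and using $|\partial_E S|\ge i_e(G)|S|$ (valid for every finite $S$) together with the bounded-degree bound $\#\{S\ni o\text{ connected},\,|S|=k\}\le C^k$ for some constant $C=C(D)$, I obtain
\[
\m^+_{B_r}(\s_o=-1)\;\le\;\sum_{k\ge1}\big(C\,e^{-2\b\, i_e(G)-2\b h}\big)^k .
\]
Since $G$ is nonamenable, $i_e(G)>0$; hence for $|h|\le i_e(G)/2$ the base of this geometric series is at most $C e^{-\b\, i_e(G)}$, which is $\le 1/6$ once $\b\ge\b_0=\b_0(i_e(G),D)$ is large enough, and then the bound is $\le 1/5$ uniformly in $r$. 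Letting $r\to\infty$ gives $\m^+(\s_o=-1)\le1/5$ and $\E_{\m^+}[\s_o]\ge 3/5>0$; the global spin flip $\s\mapsto-\s$ exchanges $\m^+$ with $\m^-$ and $h$ with $-h$, so $\E_{\m^-}[\s_o]\le-3/5<0$ in the same regime. Thus for all $\b\ge\b_0$ and all $h\in I:=[-i_e(G)/2,\,i_e(G)/2]$ the extremal Gibbs measures differ, i.e. $G$ exhibits a phase transition, and $I$ is nondegenerate precisely because $G$ is nonamenable.

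The step requiring the most care is the contour estimate on a general graph: lacking the geometric separating surfaces available on $\Z^d$, one must argue directly with the minus-cluster $S$ of $o$, verify that ``flip $S$'' multiplies the Gibbs weight by exactly $e^{2\b|\partial_E S|+2\b h|S|}$ on the event $\{\text{minus-cluster}=S\}$ and is injective on configurations (so that the probabilities of these events may be summed against $1$), and invoke the standard count $\#\{S\ni o\text{ connected},|S|=k\}\le C(D)^k$. Once this is done the roles of the two hypotheses are transparent: bounded degree controls the combinatorial entropy $C^k$, while nonamenability supplies the exponential gain $e^{-2\b\, i_e(G)k}$ that beats both the entropy and, when $h<0$, the unfavourable field factor $e^{2\b|h|k}$; this surplus is exactly what opens an entire interval of $h$, in contrast with the amenable lattice $\Z^d$, where $i_e=0$ and, by Lee--Yang, coexistence occurs only at $h=0$.
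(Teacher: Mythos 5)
Your argument is correct, and it is essentially the canonical proof of this statement: the paper itself gives no proof of Theorem \ref{th:JS}, quoting it from Jonasson and Steif, and their argument is precisely the Peierls-type estimate you carry out, in which nonamenability supplies the volume-order gain $|\partial_E S|\geq i_e(G)|S|$ that beats both the Kesten-type entropy bound $C(\Delta)^k$ and the field term for $|h|\leq i_e(G)/2$. The same mechanism reappears inside the paper (Claim \ref{claim:misuracompon} and Lemma \ref{lemma:peierls}, where the growing property plays the role of the isoperimetric inequality), so your proof is both complete and consonant with the techniques used there; the only cosmetic slip is the phrase ``raises the field term by $2|S|$'', which should read $2h|S|$, as your displayed factor $\exp\bigl(2\b|\partial_E S|+2\b h|S|\bigr)$ already reflects.
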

Thus, contrary to  what happens on the Euclidian lattice,
the Ising model on nonamenable graphs undergoes a phase transition
also at non zero value of the external field.

Though some   properties of the Ising model are
common to all nonamenable graphs,
the particular behavior of the system may differ from one family
to another one, also depending on other geometric parameters.
Since we will be especially interested in hyperbolic graphs,
we recall briefly the main results
concerning the Ising model on these graphs,
and stress which are the main differences from the model
on classical lattices.

It has been proved (see \cite{SS, Wu, Wu2})
that the Ising model on  $\H(v,s)$ exhibits two different phase
transitions appearing at inverse temperatures $\b_c\leq\b_c'$.
The first one, $\b_c$, corresponds  to the occurrence of a
uniqueness/non-uniqueness phase transition, while
 the second critical temperature refers to a change
 in the properties of the free boundary condition measure $\m^f$.
  Specifically, it is
 defined as
\be\label{secondcp}\b_c'\,:=\,\inf\{\b\geq\b_c\,:\,\m^f=(\m^+ + \m^-)/2\}\,,\ee
where $\m^+$ and $\m^-$ denote the extremal measures obtained by
imposing, respectively, $(+)$- and $(-)$-boundary condition.
As is well explained in \cite{Wu2} (see also \cite{CCST} for more details),
using the Fortuin-Kasteleyn representation it is possible
to show that $\b'_c<\infty$ for all hyperbolic graphs, in contrast
to the behavior of the model on regular trees where $\m^f\ne(\m^+ + \m^-)/2$
for all  finite $\b\geq\b_c$.
From Definition \ref{secondcp} it turns out that for $\b_c\leq\b<\b'_c$,
when this interval is not empty (see \cite{Wu2}),
the measure $\m^f$ is not a
convex combination of $\m^+$ and $\m^-$.
This implies the existence of a translation invariant
Gibbs state different from $\m^+$ and $\m^-$, in contrast
 to what happens on $\Z^d$ \cite{Bod}.

Another interesting result concerning the Ising model on hyperbolic graphs,
is due to Sinai and Series \cite{SS}.
For low enough temperatures and $h=0$, they
proved the existence of uncountably many mutually singular Gibbs states which they
conjectured to be extremal. This points out, once more,
the difference between the system on  hyperbolic graphs
and on  classical lattices, where it is known that the extremal
measures are at most a countable number.\vspace{3mm}

In this paper we are interested in the region of the phase diagram
where the dynamics is highly sensitive to the boundary condition,
namely when the temperature is low and the magnetic field is zero
(phase coexistence region).
Let us explain the model in detail and give the necessary
definitions and notation.

Let $G=(V,E)$ be an infinite $(g,o)$-growing graph
with maximal degree $\Delta$. For any  $r\in\N$,
we denote by $B_r=(V_r , E_r)\subset G$ the ball with radius $r$ centered in $o$.
When it does not create confusion, we identify the subgraphs  of $G$
with their vertex sets.
Given a finite ball $B\equiv B_m$ and an Ising spin configuration
$\t\in\Omega_G$, let $\Omega_B^\t\subset
\{\pm 1\}^{B\cup\partial_V B}$ be the set of configurations that
agree with $\t$ on $\partial_V B$. Analogously,
 for any subset $A\subseteq V_m$ and any $\n\in\Omega_B^\t$, we
 denote by $\Omega_A^\n\subset \{\pm
1\}^{A\cup\partial_V A}$  the set of configurations  that agree with
$\n$ on $\partial_V A$.  The Ising model on $A$ with $\n$-boundary
condition (b.c.) and zero external field is thus specified by the
Gibbs probability measure $\m_A^\n$, with support on $\Omega_A^\n$,
defined as
\be\label{def:gibbsmeasure}\m_A^\n(\s)\,=\,\displaystyle\frac{1}{Z(\b)}\exp(\,\b
\sum_{(x,y)\in E(\overline{A})} \s_x \s_y\,)\,,\ee where $Z(\b)$ is
a normalizing constant and the sum runs over all pairs of
nearest neighbors in the induced subgraph on
$\overline{A}=A\cup\partial_V A$.

Similarly, the Ising model on $A$ with free boundary condition is
specified by the Gibbs measure
 $\m_A$ supported on the set of configurations $\Omega_A:= \{\pm 1\}^{A}$.
 This is defined as in (\ref{def:gibbsmeasure}) by replacing the sum over
$E(\overline{A})$ in a sum over $E(A)$,   namely cutting away the
influence of the boundary $\partial_V A$. Notice that when
$A=V_m$, $\m^\n_{V_m}$ is simply the Gibbs measure on $B$ with
boundary condition $\t$ ($\n$ agrees with $\t$ on
$\partial_VV_m\equiv\partial_V B$) and $\m_{V_m}$ is the Gibbs measure
on $B$ with free boundary condition.

We denote by $\mathcal F_{A}$ the $\s$-algebra  generated by the set
of projections $\{\pi_x\}_{x\in A}$ from $\{\pm 1\}^A$ to $\{\pm
1\}$, where $\pi_x\,:\,\s\mapsto\s_x$, and  write
$f\in\mathcal{F}_A$ to indicate that $f$ is
$\mathcal{F}_A$-measurable. Finally, we recall that if
$f\,:\,\Omega_B^\t\,\rightarrow\,\mathbb R$ is a measurable
function, the \emph{expectation of} $f$ w.r.t. $\m_A^\n$ is given by
$\m_A^\n (f)=\sum_{\s\in\Omega}\m_A^\n(\s)f(\s)\,$
 and the \emph{variance of} $f$ w.r.t. $\m_A^\n$ is
 given by $\var_{A}^\n\,=\,\m_A^\n (f^2)- \m_A^\n (f)^2$.
 We usually think of them as functions of $\n$, that is $\m_A(f)(\n)=
\m_A^\n (f)$ and $\var_A(f)(\n)=\var_A^\n (f)$. In particular
$\m_A(f)\,,\,\var_A(f) \in\mathcal{F}_{A^c}$.

 In the following discussion we will
be concerned with the Ising model on $B$ with $(+)$-b.c. and we will
use the abbreviations $\Omega^+$, $\mathcal F$ and  $\m$ instead of
$\Omega_B^+$, $\mathcal F_{\overline B}$ and $\m_B^+$, and
thus $\m(f)$ and $\var(f)$ instead of $\m_B^+ (f) $ and
$\var_{B}^+(f)$.

\subsection{Glauber dynamics and mixing time}\label{sec:glauber}

The  Glauber dynamics on $B$ with $(+)$-boundary condition is a
continuous time Markov chain $(\s(t))_{t\geq0}$ on $\Omega^+$ with
Markov generator $\cL $  given by
\be\label{generatoreising}(\mathcal{L}f)(\s)\,=\,\sum_{x\in B}
c_x(\s)\left[f (\s^x)-f(\s)\right]\,,\ee where $\s^x$ denotes the
configuration obtained from $\s$ by flipping the spin at the site
$x$ and $c_x(\s)$ is the jump rate from $\s$ to $\s^x$. We sometimes
prefer the short notation $\nabla_x f (\s)=[f(\s^{x})-f(\s)]$.
 The  jump rates are required to be of finite-range,
uniformly positive, bounded, and they should satisfy  the detailed
balance condition  w.r.t. the Gibbs measure $\m$. Although all our
results apply to any choice of jump rates satisfying these
hypothesis, for simplicity we will work with a specific choice
called \emph{heat-bath dynamics}:
 \be\label{rates}
c_x(\s)\,:=\,\m_{x}^\s(\s^x)=\frac{1}{1+\omega_x (\s)}\;\mbox{ where
}\quad \omega_x(\s)\,:=\,\exp(2\b\s_x\,\sum_{y\sim x}\s_y\,).
\ee
 It is easy to check that the Glauber dynamics is ergodic and
 reversible w.r.t. the Gibbs measure $\m$, and so converges to $\m$ by the
 Perron-Frobenius Theorem. The key point is now to determine the rate of
 convergence of the dynamics.

 A useful tool to approach this problem is the \emph{spectral gap}
 of the generator $\cL$, that can be defined as the inverse of
 the first nonzero eigenvalue of $\cL$.
 \begin{remark}
 Notice that the generator $\cL$ is a non-positive
 self-adjoint operator on $\ell^2(\Omega^+,\mu)$. Its spectrum
 thus consists of discrete eigenvalues of finite multiplicity that
 can be arranged as $0=\l_0\geq -\l_1\geq-\l_2\geq\ldots,\geq-\l_{N-1}$, if
 $|\Omega^+|=N$, with $\l_i\geq 0$.
\end{remark}
An equivalent definition of spectral gap is given through the so
called \emph{Poincar\'{e} inequality} for the measure $\m$. For a
function $f:\Omega^+\mapsto \R$, define
 the \emph{Dirichlet form} of $f$ associated to $\cL$ by
 \be\label{dirichlet} \mathcal{D}(f)\,:=\,\frac{1}{2}\sum_{x\in B}\m\left(
 c_x[\nabla_x f]^2\right)\,=\,
\sum_{x\in B} \m(\var_{x}(f))\,,\ee where the second equality holds
under our specific choice of jump rates. The \emph{spectral gap} of the generator,
$\cg(\m)$, is then defined as the inverse of the best constant $c$ in
the \emph{Poincar\'{e} inequality} \be\label{def:poincare}
\v(f)\,\leq\,c\cD(f)\,,\quad\forall f\in \ell^2(\Omega^+,\,\m),\ee
or equivalently
\be\label{cgap}
\cg(\m)\,:=\,\inf\left\{\frac{\mathcal{D}(f)}{\v(f)};
\v(f)\ne 0\right\}\,.
\ee
Denoting by $P_t$ the Markov semigroup associated to $\cL$, with
transition kernel  $P_t(\s,\n)= e^{t\cL}(\s,\n)$, it easy to show
that
 \be\label{cgapdecay}
 \v (P_t f)\,\leq\,e^{-2\cg(\m)
t}\,\v(f)\,.
\ee
The last inequality shows that the spectral gap gives a measure of
the exponential decay of the variance, and justifies the name
\emph{relaxation time} for the inverse of the spectral gap.

Moreover, let $h_t^\s$ denote the density of the distribution at
time $t$ of the process starting at $\s$ w.r.t. $\m$, i.e.
$h_t^\s\,(\n)\,=\,\frac{P_t(\s,\,\n)}{\m(\n)}$.
For $1\leq p\leq
\infty$ and a function $f\in \ell^p(\Omega^+,\,\m)$, let $\|f\|_p$
denote the $\ell^p$ norm of $f$ and define the time of convergence
\be\label{tempomixing}
\t_p\,=\min
\left\{t>0\,:\,\sup_{\s}\|h_t^\s-1\|_p\leq e^{-1}\right\}\,,
\ee
that
for $p=1$ is  called \emph{mixing time}. A well known and useful
result relating $\t_p$ to the spectral gap (see, e.g., \cite{Sa}),
when specializing to the Glauber dynamics yields the following:
\begin{theorem}\label{teo:gapmixingtime}
On an $n$-vertex ball $B\subset G$ with
$(\t)$-boundary condition,
\be
\cg(\m)^{-1}\leq \t_1 \leq \cg(\m)^{-1}\times c n\,,
\ee
where
$\m=\m_B^\t$ and $c$ is a positive constant independent of
$n$.{\hfill$\square$\vskip.5cm}
\end{theorem}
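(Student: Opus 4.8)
The plan is to establish the two inequalities separately, both being instances of the classical comparison between relaxation time and $\ell^p$-mixing time for a reversible chain. For the lower bound $\t_1\ge\cg(\m)^{-1}$ I would probe the $\ell^1$-distance to equilibrium with the eigenfunction realizing the gap. Recall (see the Remark preceding the statement) that $\cL$ has an orthonormal eigenbasis of $\ell^2(\Omega_B^\t,\m)$; let $\phi$ be an eigenfunction with $\cL\phi=-\cg(\m)\,\phi$, normalized so that $\m(\phi)=0$ and $\|\phi\|_\infty=1$. Writing $h_t^\s$ for the density of $P_t(\s,\cdot)$ with respect to $\m$ as in \eqref{tempomixing}, and using $P_t\phi=e^{-\cg(\m)t}\phi$ together with $\m(\phi)=0$, one gets for every $\s$
\[
e^{-\cg(\m)t}\,\phi(\s)=(P_t\phi)(\s)=\m\!\big((h_t^\s-1)\,\phi\big),
\]
hence $e^{-\cg(\m)t}\,|\phi(\s)|\le\|\phi\|_\infty\,\|h_t^\s-1\|_1=\|h_t^\s-1\|_1$. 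Evaluating at a site where $|\phi|$ attains its maximum $1$ gives $e^{-\cg(\m)t}\le\sup_\s\|h_t^\s-1\|_1$ for all $t\ge0$; comparing with the defining condition of $\t_1$ in \eqref{tempomixing} forces $\cg(\m)\,\t_1\ge1$.

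For the upper bound I would first pass to the $\ell^2$-mixing time: since $\m$ is a probability measure, $\|\cdot\|_1\le\|\cdot\|_2$ by Cauchy--Schwarz, so $\t_1\le\t_2$. Expanding $h_t^\s-1$ in the eigenbasis of $\cL$ and using that $\cg(\m)$ is the inverse of the smallest nonzero eigenvalue yields the standard estimate
\[
\|h_t^\s-1\|_2^2=\frac{P_{2t}(\s,\s)}{\m(\s)}-1\le\frac{e^{-2\cg(\m)t}}{\m(\s)},
\]
so $\|h_t^\s-1\|_2\le e^{-1}$ holds as soon as $t\ge\cg(\m)^{-1}\big(1+\tfrac12\log(1/\m_{\min})\big)$, where $\m_{\min}:=\min_{\s\in\Omega_B^\t}\m(\s)$. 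The remaining point is to check $\log(1/\m_{\min})=O(n)$, and this is the only step that uses more than spectral theory: the induced subgraph on $\overline B=B\cup\partial_V B$ has at most $\tfrac12\Delta\,|\overline B|\le C_1 n$ edges (with $\Delta$ the maximal degree and $|\overline B|\le(\Delta+1)n$), so by the explicit formula \eqref{def:gibbsmeasure} every configuration has energy at most $\b C_1 n$ in absolute value; hence $e^{-\b C_1 n}\le Z(\b)\le 2^n e^{\b C_1 n}$ and $\m_{\min}\ge 2^{-n}e^{-2\b C_1 n}=e^{-C_2 n}$ for some $C_2=C_2(\b,\Delta)$. Combining, $\t_1\le\t_2\le\cg(\m)^{-1}\big(1+\tfrac12 C_2 n\big)\le c\,n\,\cg(\m)^{-1}$ with $c$ independent of $n$.

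I do not expect any genuine obstacle here — this is a textbook relaxation-time/mixing-time comparison. The only place where the hypotheses of the statement are really needed, rather than pure linear algebra, is the bound $\m_{\min}\ge e^{-cn}$, which rests on $G$ having bounded degree (on a graph with unbounded degrees the constant multiplying $\cg(\m)^{-1}$ would worsen). The rest is bookkeeping, and the only care required is in keeping track of which $\ell^p$-norm defines the mixing time and of the convention that $\cg(\m)$ here denotes the gap itself, so that $\cg(\m)^{-1}$ is the relaxation time.
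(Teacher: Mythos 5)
Your proof is correct and is precisely the standard relaxation-time/mixing-time comparison that the paper does not reproduce but simply imports from the cited reference \cite{Sa}: eigenfunction testing gives the lower bound, and the diagonal $\ell^2$ estimate combined with $\log(1/\m_{\min})=O(n)$ (which is where bounded degree enters) gives the upper bound. The only blemish is a wording slip: in the upper bound you call $\cg(\m)$ ``the inverse of the smallest nonzero eigenvalue,'' whereas by the paper's definition \eqref{cgap} it is that eigenvalue itself, as your formulas and your closing remark correctly assume.
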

We stress that a different choice of jump rates (here we considered
the heat-bath dynamics) only affects the spectral gap by at most a
constant factor. The bound stated in Theorem \ref{teo:gapmixingtime}
is thus  equivalent, apart for a multiplicative constant, for any
choice of the Glauber dynamics.

Before presenting our main result, we  recall that the Glauber dynamics
for the Ising model on regular trees and hyperbolic graphs has been recently
investigated by Peres et al. \cite{KMP, BKMP}.
In particular, they consider the free boundary dynamics on a finite ball
$B\subset G$, $G$ hyperbolic graph or regular tree,
and prove  that at all
temperatures,  the inverse spectral gap (relaxation time)
scales at most polynomially in the size of $B$, with exponent
$\a(\b) \uparrow\infty$ as $\b\rightarrow\infty$. Let us
stress again that under the same conditions, the dynamics on a cube of
size $n$ in the $d$-dimensional lattice,  relaxes in a time
exponentially large in the surface area $n^{(d-1)/d}$.

\subsection{Main results}
We  are finally in position to state our main results.
\begin{theorem}\label{teo}
Let $G$  be an infinite $(o,g)$-growing graph
with maximal degree $\Delta$. Then, for all $\b\gg 1$,
the Glauber dynamics on the
$n$-vertex ball $B$ with (+)-boundary condition and
zero external field has spectral gap $\Omega (1)$.
\end{theorem}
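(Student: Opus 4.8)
The plan is to split the argument, as in \cite{MSW}, into an equilibrium part and a dynamical part: first establish a strong spatial mixing property of the plus phase — this is the content of section~3 — and then feed it into a recursive block‑dynamics estimate over the concentric balls $B_1\subset B_2\subset\cdots\subset B_m=B$, which is section~4. Two consequences of the growing hypothesis (Definition~\ref{def:growing}) are used throughout. Since a growing graph is nonamenable, its balls grow exponentially, so the $n$‑vertex ball $B$ has radius $m=O(\log n)$ and the recursion runs over only logarithmically many scales. More importantly, the isoperimetric lower bound $|\partial_E S|\ge c\,|S|$ holds with a constant $c=c(g,\Delta)>0$ depending only on the \emph{infinite} graph $G$ and not on the ball or on $n$; this — the mechanism absent on $\mathbb Z^d$ — is what makes every estimate below uniform in $n$.

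For the equilibrium step I would show that, under $\m_B^+$ with $\b\gg1$, the $(-)$‑spins survive only in small droplets. By a Peierls‑type argument the event that the $(-)$‑cluster of a vertex $x$ equals a given connected set $S$ (necessarily with $(+)$ on $\partial_V S$) has $\m_B^+$‑probability at most $e^{-2\b|\partial_E S|}$; combining $|\partial_E S|\ge c\,|S|$ with the standard bound $(e\Delta)^k$ on the number of connected vertex sets of size $k$ through a fixed vertex, one gets, for $\b$ above a threshold depending only on $g$ and $\Delta$,
\[
\m_B^+\big(\,x\text{ lies in a }(-)\text{-cluster of size}\ge k\,\big)\;\le\;e^{-c'\b k},
\]
uniformly in $B$, in $x\in V_m$, and in the boundary configuration. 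From this I extract the mixing statement needed in section~4: changing the boundary condition, or conditioning on the spin at a vertex $x$ far from the support of a local function $f$, perturbs $\m_B^+(f)$ by at most $\|f\|_\infty\,e^{-c''\b\,d(x,\mathrm{supp}\,f)}$, and dually $\cov_B^+(f,g)$ decays exponentially in $d(\mathrm{supp}\,f,\mathrm{supp}\,g)$. Cycles already enter here: a boundary disagreement can reach $\mathrm{supp}\,f$ along many competing paths, so the droplet estimate must be run directly for the edge boundary $\partial_E$ rather than propagated along a tree of ancestors as in \cite{MSW}.

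For the dynamical step I would prove $\cg(\m_{B_r}^{\eta})\ge c_0>0$ by induction on $r$, for all boundary configurations $\eta$ on $L_{r+1}=\partial_V B_r$ of the ``mostly $(+)$'' type that the previous step shows to dominate. For the inductive step, split $B_r$ into the two blocks $B_{r-1}$ and $L_r$ and consider the heat‑bath block chain that alternately resamples $L_r$ (given $B_{r-1}$ and the external $\eta$) and $B_{r-1}$ (given $L_r$). By the standard comparison between a chain and its block version (see \cite{MSW} and the references therein), schematically
\[
\cg(\m_{B_r}^{\eta})\;\ge\;\cg(\text{block chain})\cdot\min\Big\{\min_{\zeta'}\cg(\m_{L_r}^{\zeta'}),\ \min_{\zeta}\cg(\m_{B_{r-1}}^{\zeta})\Big\}.
\]
The shell factor is $\Omega(1)$ because, conditionally, $L_r$ carries a bounded‑degree finite‑range interaction and splits into blocks of $O(1)$ size; the block‑chain factor is pushed to $1-o(1)$ by the equilibrium step, the only obstruction being the correlation between a function on $B_{r-1}$ and the conditional law of $L_r$, which is $O(e^{-c\b})$; and $\cg(\m_{B_{r-1}}^{\zeta})$ is supplied by the induction hypothesis for the ``mostly $(+)$'' $\zeta$. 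The rare configurations on $L_r$ outside that class, for which the inner gap may be genuinely small, contribute a term that has to be estimated separately and absorbed, again via the droplet bound. Finally, the $O(\log n)$ inductive losses $1-O(e^{-c\b r})$ multiply to a constant bounded away from zero, since $\prod_{r\ge1}\big(1-O(e^{-c\b r})\big)>0$ for $\b$ large; together with Theorem~\ref{teo:gapmixingtime} this also gives the linear bound on the mixing time.

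The hard part is precisely the presence of cycles. On a tree the subgraphs below a shell are conditionally independent given the shell, the block chain then has gap exactly $1$, and the only frustrated boundary datum is a single $(-)$‑spin at the parent vertex, which is readily controlled; on a growing graph the two blocks genuinely interact and the shells are large, so the equilibrium spatial mixing must be used in an essential, quantitative way — both to bound the block‑chain gap and to tame the rare unfavourable boundary configurations on internal shells. A second, more bookkeeping‑type difficulty is to check that every constant — in the Peierls bound, in the block comparison, and in the induction — is uniform in $n$ and in the (mixed) boundary condition; this uniformity, which rests entirely on the isoperimetric lower bound guaranteed by the growing hypothesis, is what ultimately delivers $\cg(\m)=\Omega(1)$.
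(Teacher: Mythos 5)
Your equilibrium step is in the right spirit (Peierls flip plus Kesten's $(e(\Delta+1))^p$ count of connected sets is exactly what Section~3 does), with one caveat: the plain isoperimetric bound $|\partial_E S|\ge c|S|$ is not what the Peierls flip actually produces once you work, as you must, with the conditional measures whose boundary condition is partly minus (e.g.\ all $(-)$ on $B_i\setminus S$). The flip gains energy on edges to $(+)$-neighbours but \emph{loses} it on edges to $(-)$-neighbours, so the exponent is $|\partial_+C|-|\partial_-C|$, and the whole point of the growing hypothesis is the level-by-level estimate $|\partial_+C|-|\partial_-C|\ge g|C|$ (Lemma~\ref{lemma:peierls}); nonamenability alone does not give this signed bound.

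The dynamical step, however, has two genuine gaps. First, the standard block-dynamics comparison requires $\min_{\zeta}\cg(\m_{B_{r-1}}^{\zeta})$ over \emph{all} boundary configurations $\zeta$ on $L_r$, including the all-minus one, for which the inner gap is expected to be exponentially small in $|B_{r-1}|$; restricting to ``mostly $(+)$'' $\zeta$ and ``absorbing'' the rest via the droplet bound is precisely the hard part, and you have not indicated how to do it (a union bound over bad $\zeta$ does not interface with the variational comparison). Second, and independently fatal for the stated conclusion: your product $\prod_{r\ge1}\bigl(1-O(e^{-c\b r})\bigr)$ presupposes that the per-scale loss decays in $r$, but the block-chain gap for the split $B_r=B_{r-1}\cup L_r$ is governed by the interaction across the fixed-width interface $L_{r-1}\cup L_r$ and is therefore only $1-O(e^{-c\b})$ \emph{uniformly} in $r$, with no mechanism for improvement as $r$ grows. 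Over the $m=\Theta(\log n)$ scales this yields $(1-O(e^{-c\b}))^{m}=n^{-O(e^{-c\b})}$, i.e.\ a polynomial rather than an $\Omega(1)$ lower bound on the gap. The paper avoids both obstructions by not inducting on the radius at all: it decomposes $\var(f)$ as the martingale sum $\sum_i\m[\v_i(\m_{i+1}f)]$ over levels, proves a Poincar\'e inequality for the \emph{marginal} of the conditioned measure on a single level by path coupling (Theorem~\ref{teoPoin}, using Proposition~\ref{propdecadimento}), and then converts $\v_{K_x}(\m_{i+1}f)$ back into $\v_x(f)$ plus a covariance term $\m_{i+1}^{\t^-}(h_x,g_{i+2})$ that is beaten down by a telescoping/recursive estimate, yielding $\pv(f)\le 2c_1\cD(f)+\ve\,\pv(f)$ with $\ve=O(e^{-c\b})$ in one shot, with no product over scales and no gap estimate ever needed for a ball with a bad boundary condition.
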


As a corollary we  obtain that, under the same hypothesis of the
theorem above,  the mixing time of the dynamics is bounded linearly
in $n$ (see  Theorem \ref{teo:gapmixingtime}).

This result, applied to hyperbolic graphs with sufficiently high degree,
 provides a convincing example of the influence of  the boundary condition
 on the mixing time.
 Indeed, for all $\b\gg 1$, due to the fact that the free boundary measure
on $\H(v,s)$ is a convex combination of
$\m^+$ and $\m^-$ (see section \ref{sec:ising}),
 it is not hard to prove that  the spectral gap for
an $n$-vertex ball in the hyperbolic graph
with free boundary condition, is decreasing with $n$.
Most likely it will be of order $n^{-\a(\b)}$, with
$\a(\b)\uparrow\infty$ as $\b\rightarrow\infty$, as
in the lower bound given in \cite{KMP, BKMP}.
The presence of $(+)$-boundary condition thus gives rise to
a jump of the spectral gap, and consequently it speeds up the dynamics.

\noindent \textbf{Remarks.}
\begin{enumerate}
\item[(i)]
\emph{ We recall that on $\Z^d$ not much is known about the
mixing time  when $\b>\b_c$, $h=0$ and the boundary condition is $(+)$,
though it has been conjectured that the it should be
polynomial in $n$ (see \cite{FH} and \cite{BM}).}
\item[(ii)]
\emph{A result similar to Theorem \ref{teo}  has been obtained for
the spectral gap, and thus for the mixing time, of the dynamics on a
regular $b$-ary tree (see \cite{MSW}). In particular it has been
proved that while under free-boundary condition the mixing time on a
tree of size $n$ jumps from $\log n$ to $n^{\Theta(\b)}$ when
passing a certain critical temperature, it remains of order $\log n$
at all temperatures and at all values of the magnetic field under
$(+)$-boundary condition. However we stress that while trees do not
have any cycle, growing graphs, and in general nonamenable graphs,
can have many cycles, as well as the Euclidean lattices.
The theorem above can thus be looked upon
as an extension of this result to a class of graphs which in some
respects are similar to Euclidean lattices. }
\item[(iii)]
\emph{At high enough temperatures (one phase region) the spectral gap
of the dynamics on a ball $B\subset G$, where
$G$ is an infinite graph with bounded degree,
is $\Omega(1)$ for all boundary conditions,
as can be proved by  path coupling techniques  \cite{Wei}.
This suggests that the result  of Theorem \ref{teo} should hold
for all temperatures, as for the  dynamics  on regular trees, and not only
for $\b\gg 1$.
At the moment, what happens in the intermediate region of temperature,
still remains an open question.}
\end{enumerate}

 The following result provides a further example
 of influence of boundary conditions on the dynamics.
\begin{theorem}\label{th:esempio}
For all finite $g\in\N$, there exists an infinite $(g,o)$-growing graph
$G$ with bounded degree, such that, for all $\b\gg 1$, the Glauber
dynamics on the $n$-vertex ball $B$ with free boundary condition
and zero external field has spectral gap $O\left(e^{-\th n}\right)$, with
$\th=\th(g,\b)>0$.
\end{theorem}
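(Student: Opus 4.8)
The plan is to realize $G$ as an infinite regular tree onto each of whose spheres one superposes a bounded‑degree expander, and then to extract an exponentially small spectral gap from a bottleneck (conductance) bound using the total magnetization as test function.

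\emph{The graph.} Given $g$, I would fix an integer $\Delta=\Delta(g)\ge 3$ with $g+\Delta$ even and $d:=g+2+\Delta\ge 7$, let $T$ be the rooted tree whose root $o$ has $d$ children and every other vertex has $d-1$ children, and let $L_r$ be its levels, so $|L_r|=d(d-1)^{r-1}\uparrow\infty$. Since bounded‑degree expander families exist on all sufficiently large vertex numbers, I would pick for each $r$ a $\Delta$‑regular graph $H_r$ on the vertex set $L_r$ with edge‑isoperimetric constant $\ge h_0=h_0(\Delta)>0$ uniformly in $r$ (for the finitely many small $r$ with no such graph, take any $\Delta$‑regular graph on $L_r$; $|L_r|$ is even since $d$ is), and let $G$ be $T$ together with all edges of all the $H_r$. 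Then $G$ has maximal degree $d+\Delta$. A one‑line induction on tree‑depth gives $d_G(x,o)=\mathrm{depth}(x)$ — every added edge joins two vertices of equal depth, hence cannot shorten a path to $o$ — so $L_r(o)=L_r$ and $B_r(o)$ is the union of $L_0,\dots,L_r$. For $x\in L_r$ with $r\ge1$ one then has $|N_x\cap L_{r+1}(o)|=d-1$ and $|N_x\cap B_r(o)|=1+\Delta$, so the quantity in \eqref{growing} equals $d-2-\Delta=g$ there and equals $d>g$ at $o$; hence $G$ is a $(g,o)$‑growing graph of bounded degree.

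\emph{The bottleneck.} The core geometric estimate I would prove is that there is $c_0=c_0(g)>0$ such that, for all large $r$ and all $S\subseteq V_r$ with $\tfrac14|V_r|\le|S|\le\tfrac34|V_r|$, at least $c_0|V_r|$ edges of $E_r=E(B_r)$ have exactly one endpoint in $S$. An easy induction gives $|V_{r-1}|\le\tfrac1{d-2}|L_r|$, whence $|L_r|\ge\tfrac{d-2}{d-1}|V_r|\ge\tfrac12|V_r|$ and, for such $S$, both $|S\cap L_r|$ and $|(V_r\setminus S)\cap L_r|$ are $\ge(\tfrac14-\tfrac1{d-2})|V_r|=:c_1|V_r|$ with $c_1>0$; since all edges of $H_r$ lie in $E_r$ and $H_r$ expands with constant $\ge h_0$, the partition of $L_r$ induced by $S$ is cut by $\ge h_0\min\{|S\cap L_r|,|(V_r\setminus S)\cap L_r|\}\ge h_0c_1|V_r|$ edges of $E_r$. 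The role of the tree is exactly this: $L_r$ already contains a constant fraction of $B_r$, so an expander on $L_r$ alone forces $B_r$ to have a linear‑size bisection.

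\emph{From the bottleneck to the gap.} Write $n=|V_r|$, let $\mu=\mu_{V_r}$ be the free‑boundary zero‑field Gibbs measure on $B=B_r$, and let $\cD,\var,\cg(\mu)$ be the associated Dirichlet form, variance and spectral gap, for which \eqref{dirichlet} and \eqref{cgap} remain valid. Take $A=\{\sigma:\sum_{x\in V_r}\sigma_x>0\}$. First, any $\sigma$ with $|m(\sigma)|\le2$ (where $m(\sigma)=\sum_x\sigma_x$) has its $(+)$‑set of size in $[\tfrac14 n,\tfrac34 n]$, hence, by the bottleneck estimate, at least $c_0n$ disagreeing edges inside $B$, so its energy exceeds that of the all‑plus configuration by $\ge 2c_0n$; bounding $Z$ below by the all‑plus term yields $\mu(\{|m|\le2\})\le 2^n e^{-2\beta c_0 n}$. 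Since this is exponentially small, the $\sigma\mapsto-\sigma$ symmetry forces $\mu(A)\in[\tfrac14,\tfrac12]$, so $\var(\1_A)\ge\tfrac3{16}$; and flipping a single spin changes $\1_A$ only on $\{|m|\le2\}$, where $\var_x(\1_A)\le\tfrac14$, so by \eqref{dirichlet}
\[
\cD(\1_A)=\sum_{x\in B}\mu\big(\var_x(\1_A)\big)\le \tfrac{n}{4}\,\mu\big(\{|m|\le2\}\big).
\]
Hence, for every $\beta>\tfrac{\log2}{2c_0}$,
\[
\cg(\mu)\le\frac{\cD(\1_A)}{\var(\1_A)}\le \tfrac{4n}{3}\,e^{-(2\beta c_0-\log2)n}=O\big(e^{-\theta n}\big),\qquad \theta=\theta(g,\beta):=\beta c_0-\tfrac12\log2>0 .
\]

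\emph{Main obstacle.} The only real difficulty is arranging the graph: it must be at once bounded degree, \emph{exactly} $(g,o)$‑growing (which pins down the local branching and tightly limits how many edges one may add), and have balls expanding in the strong sense used above. Superposing a bounded‑degree expander on each sphere meets all three requirements simultaneously, because the extra edges stay inside a level — so they do not change the graph distance and lower $|N_x\cap B_r(o)|$ by the fixed amount $\Delta$, which is absorbed by enlarging the branching — while the elementary fact that $L_r$ is a positive fraction of $B_r$ promotes level‑expansion to ball‑expansion. Everything else (the variational upper bound via $\1_{\{m>0\}}$, the Peierls‑type energy count, the existence of the expander family) is routine, and low temperature enters only to dominate the $2^n$ entropy factor.
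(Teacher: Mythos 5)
Your proposal is correct and follows essentially the same route as the paper: the same construction (an infinite tree with a bounded-degree expander superposed on each level, which is the paper's graph $X^{\Delta,d}$) and the same conductance bound via the test function $\1_{\{m_B>0\}}$ together with a Peierls-type energy estimate dominating the $2^n$ entropy factor. Your write-up is in fact slightly more complete in that you verify the exact value of the growing parameter and prove the linear-size bisection of $B_r$ from the level expanders, a point the paper only asserts.
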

Combining this with Theorem \ref{teo:gapmixingtime}
and Theorem  \ref{teo}, we get a first rigorous example where the
mixing time jumps abruptly from exponential to linear in $n$ while
 passing from  one boundary condition to another.

We now proceed to sketch briefly the ideas and techniques used along
the paper.\\
 The proof of our main result, Theorem \ref{teo}, is based on the
variational definition of the spectral gap and it is aimed to show
that the Gibbs measure relative to the system satisfies a
Poincar\'{e} inequality with constant $c$ independent of the size of
$B$. We will first analyze the equilibrium properties of the system
conditioned on having $(+)$-boundary, and under this condition we
will deduce a special kind of correlation decay between spins.
The proof of this spatial mixing property rests on a
disagreement argument and on a Peierls type
argument.

The second main step to prove Theorem \ref{teo}, is deriving a
Poincar\'{e} inequality for the Gibbs measure
from the obtained notion of spatial mixing. This will be achieved by
first deducing, via coupling techniques, a like-Poincar\'{e} inequality
for the marginal Gibbs measure with support on suitable subsets, and
then iterating the argument to recover the required estimate on the
variance.

The proof of Theorem \ref{th:esempio} is given by the explicit
construction of a growing graph with the property of remaining
"expander"  even when the boundary of a finite ball is erased,
as in the free measure. Using a suitable test function,
we will prove the stated exponentially small upper bound
on the spectral gap.

\section{Mixing properties of the plus
phase}\label{sec:mixing}

In this section we analyze the effect of the $(+)$-boundary
condition on the equilibrium properties of the system. In
particular, we prove that the Gibbs measure $\m\equiv\m_{B}^+$
satisfies a kind of \emph{spatial mixing property}, i.e. a form of
weak dependence between spins placed at distant sites.

Before presenting the main result of this section, we need some more
notation and definitions. Recall that for every integer $i$, we
denoted by $B_i=(V_i, E_i)$ the ball of radius $i$ centered in $o$,
and by $B=B_m$ the ball of radius $m$ such that $|V_m|=n$.
Let us define the following objects:
\begin{enumerate}
\item[(i)]  the $i$-th \emph{level}
$L_i = \{x\in V\,: \, d(x,o)=i \}\equiv\partial_V B_{i-1}$;
\item[(ii)] the vertex-set $F_i\subseteq B$ given by $F_i\,:=\,\{x\in
B_{i-1}^c \cap B\}\,$;
\item[(iii)] the $\s$-algebra $\mathcal{F}_i$
generated by the functions $\pi_x$ for $ x\in F_i^c =B_{i-1} $.
\end{enumerate}%
We will be mainly concerned with the Gibbs distribution on $F_i$
with boundary condition $\n\in\Omega^+$, which we will shortly
denote by
$\m_i^\n\,=\,\m_{F_i}^\n\,=\,\m(\cdot\,|\n\in\mathcal{F}_i)$;
analogously we will denote by $\v_i^\n$ the variance w.r.t.
$\m_i^\n$.

Notice that $\{F_i\}_{i=0}^{m+1}$ is a decreasing
sequence of subsets such that $V_m\,=\,F_0\supset
F_1\supset\ldots\supset F_{m+1}\,=\,\emptyset$, and in particular
$\m_i(\m_{i+1}(f))\,=\,\m_{i}(f)$, for all finite $i$,
and $\m_{m+1}(f)=f$. The set of variables $\{\m_{i}(f)\}_{i\geq0}$
is a Martingale with respect to the filtration $\{\mathcal{F}_i\}_{i\geq 0}$.

For a given  $i\in \{0,\ldots,m\}$ and a given subset
$S\subset L_i$, we set $U=F_{i+1}\cup S$
and consider the Gibbs measure
conditioned on the configuration outside $U$ being $\t\in\Omega^+$,
which as usually will be denoted by
$\m_{U}^\t$.

 We are now able to state the following:
\begin{proposition}\label{propdecadimento}
Let $G$ a $(g,o)$-growing graph with maximal degree $\Delta$.
Then there exists a constant $\d=\d(\Delta)>0$ such that,
 for every $\b>\frac{\d}{2g}$, every $\t\in\Omega^+$, and every pair of
vertices $x\in S \subset L_i$ and $y\in L_i\setminus S$, $i\in\{0,\ldots, m\}$,
\be\label{decadimento}
|\m_{U}^\t(\s_x=+)-\m_{U}^{\t^y}(\s_x=+)|\leq c e^{-\b'
d(x,y)}\,,\quad \ee with $\b':=2g\b -\d>0$ and for some constant $c>0$.
\end{proposition}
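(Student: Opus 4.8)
The plan is to bound the difference of the two magnetizations at $x$ by coupling the Gibbs measures $\m_U^\t$ and $\m_U^{\t^y}$ and showing that the disagreement created by flipping $\t$ at the single boundary site $y$ must "travel" from $y$ to $x$ through a contour, which in a $(g,o)$-growing graph is exponentially costly. First I would set up the standard disagreement (grand) coupling of the two dynamics/measures: run heat-bath Glauber chains simultaneously from the respective stationary measures, or more directly use the monotone coupling. Since the two boundary conditions differ only at $y$ and both are $\geq$ the all-plus configuration on the remaining boundary, monotonicity (FKG) gives that under the optimal coupling the configuration under $\t^y$ dominates that under $\t$ (or vice versa depending on the sign of the flip), so the set of disagreeing vertices $\cD$ is a random subset of $U$ containing $y$ on its boundary. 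The key observation is that $\{\s_x \text{ disagrees}\}$ forces the existence of a path of disagreeing vertices — equivalently a contour in the dual/interface sense — connecting (a neighborhood of) $y$ to $x$.

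The heart of the argument is then a Peierls-type estimate, carried out in the $(+)$-phase geometry of a growing graph. I would argue that if $x$ and $y$ are disagreeing, there is a $(-)$-cluster (a connected set of $\s=-1$ sites under one of the two coupled configurations) whose outer boundary separates $x$ from the plus boundary of $U$, and which must have diameter at least of order $d(x,y)$; equivalently a $*$-connected circuit/contour $\gamma$ of length $\gtrsim d(x,y)$ around $x$. Summing the Gibbs weight $e^{-2\b|\del_E \gamma|}$ over such contours, and using the growing hypothesis \eqref{growing} to control the combinatorial entropy of contours, gives the bound. Concretely, in a $(g,o)$-growing graph the edge-isoperimetric inequality forces $|\del_E \gamma| \geq g\cdot|\gamma| + \text{const}$ for any connected $\gamma$ (this is where the parameter $g$ enters), so each contour of "size" $k$ costs at least $e^{-2\b(gk)}$ while the number of such contours through a fixed region grows at most like $C^k$ for some $C=C(\Delta)$ depending only on the maximal degree. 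Choosing $\d=\d(\Delta)$ so that $C \leq e^{\d}$, the sum over contours enclosing $x$ and reaching distance $d(x,y)$ is dominated by a geometric series with ratio $e^{2\b(\d/(2g)\cdot g) - 2\b g} = e^{\d - 2\b g} < 1$ precisely when $\b > \d/(2g)$, and its tail is $\lesssim e^{-(2\b g - \d)\,d(x,y)} = e^{-\b' d(x,y)}$, with $\b' = 2\b g - \d$ as claimed. Putting $|\m_U^\t(\s_x=+) - \m_U^{\t^y}(\s_x=+)| = \P_{\text{coupling}}(x \in \cD) \leq \sum_{\gamma} (\text{weight}) \leq c\, e^{-\b' d(x,y)}$ completes the estimate.

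There are a few technical points to take care of. One must be careful that the relevant set $U = F_{i+1}\cup S$ is not itself a ball but a "cone"-like region with $(+)$-boundary on the outer part and $\t$ on the part inherited from $\del_V B$; since $\t \in \Omega^+$ this only helps (more plus spins only suppress disagreement, by monotonicity), but the contour argument must be run inside the induced subgraph on $\overline U$, where the isoperimetric constant could a priori be worse than that of $G$ — here one uses the growing property \emph{at every level}, exactly as stated in Definition \ref{def:growing}, which is a local condition and survives restriction to $\overline U$. A second point is the precise notion of "contour" in a graph with cycles: unlike on $\Z^d$ there is no planar duality to lean on in general, so I would work directly with connected subsets of vertices (the $(-)$-cluster and its vertex boundary) rather than with geometric contours, and use the combinatorial fact that a connected subgraph of $\overline U$ with $k$ vertices and maximal degree $\Delta$ can be encoded by a spanning tree plus $O(k)$ bits, giving the $C(\Delta)^k$ bound on the number of such clusters rooted near $x$.

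The main obstacle I anticipate is making the separation/Peierls step rigorous without planarity: one has to show that on a growing graph, if $x$ flips its sign between the two coupled configurations while the outer boundary of $U$ stays $(+)$, then there genuinely is a connected $(-)$-set of size $\gtrsim d(x,y)$ — this requires that the disagreement cluster containing $x$ cannot be "small and local" but must reach out to where the perturbation was introduced, which is where one invokes that the only source of disagreement is the single flipped site $y$ together with the graph structure. Establishing this cleanly, and checking that the constant $C(\Delta)$ controlling contour entropy truly depends only on $\Delta$ and not on $n$ or $i$, is the delicate part; everything else is a geometric-series bookkeeping exercise.
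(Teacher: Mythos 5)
Your overall architecture --- reduce the influence of the single flipped boundary spin at $y$ to the existence of a large connected ``bad'' set joining $y$ to $x$, then kill that event by a Peierls estimate whose energy gain is linear in the cluster size (via the growing property) against a $C(\Delta)^k$ entropy of connected sets --- matches the paper's, and you correctly identify where $g$ and $\Delta$ enter and how $\delta$ and $\beta'=2g\beta-\delta$ come out. But the step you yourself flag as delicate is a genuine gap, and the paper closes it by a different and cleaner mechanism than the one you propose. You want to say that under a monotone coupling, disagreement at $x$ forces a connected path of disagreeing vertices from $y$ to $x$; that is not a property of an arbitrary monotone coupling but the content of van den Berg--Maes disagreement percolation, which requires constructing a specific coupling you do not supply. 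The paper avoids couplings entirely here: it conditions on the maximal negative connected component $C^{(\sigma)}$ of $y$ inside $U$. If that component does not reach $x$, then all spins on its vertex boundary inside $U$ are $+$, so by stochastic domination the conditional law at $x$ dominates $\mu_U^{y,+}$ and those terms cancel; what survives is exactly $\mu_U^{y,-}(A^c)$, the probability that the negative cluster of $y$ reaches $x$ and hence has at least $\ell+1$ vertices. No separating circuit around $x$ and no planarity is needed --- only connectivity of the negative cluster of $y$ --- so the obstacle you anticipate simply does not arise on that route.

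Second, your Peierls inequality is misstated in a way that matters. After reducing by monotonicity to the worst-case measure $\mu_U^-$ with all $(-)$ spins on $U^c=B_i\setminus S$, flipping the cluster $C$ to $+$ \emph{loses} energy on the boundary edges touching $U^c$: the correct estimate is $\mu_U^-(\sigma\sim C)\le e^{-2\beta(|\partial_+C|-|\partial_-C|)}$, and what the growing property yields (Lemma \ref{lemma:peierls}, proved level by level, which is why it survives restriction to $U$) is $|\partial_+C|-|\partial_-C|\ge g|C|$, not $|\partial_E C|\ge g|C|$. Using the full edge boundary with weight $e^{-2\beta|\partial_E C|}$ ignores the adverse contribution of $\partial_-C$, and the flip computation in (\ref{hamilt})--(\ref{mispathlunghi}) would then be wrong. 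Finally, two small attributions: the $C(\Delta)^k$ entropy bound comes from Kesten's lemma on connected sets containing a fixed vertex (Lemma \ref{lemma:Kesten}), not from the growing hypothesis; the growing hypothesis controls the energy, not the entropy.
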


Let us briefly justify the above result.
Since the boundary of $B$ is proportional to its volume,
the $(+)$-b.c. on $B$ is strong enough
to influence spins at arbitrary distance.
In particular, as we will prove, the effect of the $(+)$-boundary on a given
spin $\s_x$, weakens the influence on $\s_x$ coming from other spins
(placed in vertices arbitrary near to $x$) and gives rise to the decay correlation
stated in Proposition \ref{propdecadimento}. Notice that the
correlation decay increases with $\b$.

The proof of Proposition \ref{propdecadimento} is divided in two parts.
First, we  define a suitable event and show that the correlation
between two spins is controlled by the probability of this event.
Then, in the second part, we estimate this probability using a
Peierls type argument.
Throughout the discussion $c$ will denote a constant which is
independent of $|B|=n$, but may depend on the parameters $\D$ and $g$ of
the graph, and on $\b$. The particular value of $c$ may
change from line to line as the discussion progresses.

\subsection{Proof of Proposition
\ref{propdecadimento}}\label{sec:proofdecay}

 Let us consider two vertices $x\in S \subset L_i$ and $y\in L_i\setminus S$,
 such that $d(x,y)=\ell$, and a configuration $\t\in \Omega^+$.
 Let $\t^{y,+}$ be the configuration that agrees with $\t$ in all sites
but $y$ and has a $(+)$-spin on $y$; define analogously $\t^{y,-}$
and denote by $\m_{U}^{y,+}$ and $\m_{U}^{y,-}$ the measures
conditioned on having respectively  $\t^{y,+}$- and $\t^{y,-}$-b.c..
With this notation and from the obvious fact that the event
$\{\s:\,\s_x=+\}$ is increasing, we get that
\be\label{discrepanza}
|\m_{U}^\t(\s_x=+)-
\m_{U}^{\t^y}(\s_x=+)|\,=\,\m_{U}^{y,+}(\s_x=+)-\m_{U}^{y,-}(\s_x=+)\,.
\ee
In the rest of the proof we will focus on the correlation in the
r.h.s. of (\ref{discrepanza}).

In order to introduce and have a better understanding of the ideas
and techniques that we will use along the proof, we first consider
the case $\ell=1$, which is simpler but with a similar structure
 to the general case $\ell>1$.

\subsubsection{Correlation decay: the case $\ell=1$}
Assume that $\ell=1$, namely that $x$ and $y$ are
neighbors. Denoting by $\m_{U}^{-}$ the measure with
$(-)$-b.c. on  $ U^c=B_i\setminus\{S\} $ and $(+)$-b.c. on
$\partial_V B$, we get
\bea\label{roughbound}
 \m_{U}^{y,+}(\s_x=+)-\m_{U}^{y,-}(\s_x=+)&=&
 \m_{U}^{y,-}(\s_x=-)-\m_{U}^{y,+}(\s_x=-)\nonumber\\
 &\leq&
\m_{U}^{y,-}(\s_x=-)\nonumber\\
&\leq & \m_{U}^{-}(\s_x=-)\,,
\eea
where the last inequality follows by monotonicity. The problem is
thus reduced to estimate the probability of the event
$\{\s:\,\s_x=-\}$ w.r.t. $\m_{U}^{-}$.

Let $\cK$  be the set of connected subsets of $U$ containing $x$
and write
$$
\cK\,=\bigsqcup_{p\geq 1} \cK_p \quad\quad\mbox{ with
}\,\,\, \cK_p =\{C\in\cK\,\mbox{ s.t. }  |C|=p\} \,.
$$
 For any configuration $\s\in\Omega^+$, we denote by $K^{(\s)}$
 the maximal negative component in $\cK$ admitted by $\s$, i.e.
\be\label{def:negcomponent}
K^{(\s)}\in \mathcal K \,\,\, \mbox{
s.t. } \left\{\begin{array}{ll}
\s_z =- &  \forall\, z\in K^{(\s)}\\
\s_z =+ & \forall\, z\in \partial_VK^{(\s)}\cap U
\end{array}\right.\,
\ee
 With this notation the event $\{\s:\,\s_x=-\}$ can be
expressed by means of disjoint events as
\be\label{def:spinnegativo}
\{\s:\,\s_x=-\}\,=\,\bigsqcup_{p\geq 1} \bigsqcup_{C\in\,\mathcal
K_{p}}\{\s:\,K^{(\s)}\,=\,C\}\,,
\ee
and then
 \be\label{misuraspinnegativo}
  \m_{U}^{-}
(\s_x=-)\,=\,\sum_{p\geq 1}\,\,\sum_{C\in\, \mathcal
K_{p}}\m_{U}^{-}( K^{(\s)}=C)\,.
\ee

Let us introduce the symbol $\s\sim C$ for a configuration $\s$ such
that $\s_C=-$ and $\s_{\partial_V C\cap U}=+$.
 The main step in the proof is to show the following claim:
\begin{claim}\label{claim:misuracompon}
If $G$ is  a $(g,o)$-growing graph with maximal degree $\Delta$,
then, for any  subset
$C\subset U$,
 \be\label{misuraC}
 \m_{U}^{-}(\s\sim C)
 \leq e^{-2g\, \b |C| }\,.
 \ee
\end{claim}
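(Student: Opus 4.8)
The plan is to estimate $\m_{U}^{-}(\s\sim C)$ by a standard energy comparison: given a configuration $\s$ with $\s\sim C$, I construct a new configuration $\s'$ by flipping all spins in $C$ to $(+)$, leaving everything else unchanged, and I bound the ratio of statistical weights $\m_{U}^{-}(\s)/\m_{U}^{-}(\s')$. Since flipping $C$ to $(+)$ only changes the energy through edges in the edge boundary $\partial_E C$ (restricted to $\overline U$), the change in energy is governed by $|\partial_E C|$ and the relevant neighbour counts. The key geometric input is the growing property: since $C\subset U\subset B$ and every vertex $z\in L_r\cap C$ satisfies $|N_z\cap L_{r+1}| - |N_z\cap B_r|\geq g$, summing over $z\in C$ gives a linear lower bound $|\partial_E C|\geq$ (something like) $g|C|$ on the "outward" edge boundary of $C$ once one is careful about which edges leave $C$ versus which stay inside. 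Concretely, for each $z\in C$ the number of its neighbours that lie strictly outside $C$ and at a larger distance from $o$ exceeds the number inside $C$ by at least $g$; summing over the $|C|$ vertices of $C$ and noting that interior edges of $C$ are double-counted yields $|\partial_E(C)| \geq g|C|$ (this is essentially the nonamenability estimate specialized to balls).

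The main steps, in order: (1) Fix $\s\sim C$ and define $\s'$ by $\s'_z = +$ for $z\in C$ and $\s'_z=\s_z$ otherwise; note $\s'\in\Omega_{U}^{-}$ as well. (2) Compute $\m_{U}^{-}(\s)/\m_{U}^{-}(\s') = \exp\!\big(\b\sum_{(xy)}(\s_x\s_y - \s'_x\s'_y)\big)$, where the sum is over edges in $E(\overline U)$ with at least one endpoint in $C$; edges with both endpoints in $C$ are unchanged (both configurations have $-\cdot- = +\cdot+ = 1$ there — wait, careful: in $\s$ they are $(-)(-)=+1$, in $\s'$ they are $(+)(+)=+1$, so indeed unchanged), so only boundary edges $e=(z,w)\in\partial_E C$ with $z\in C$, $w\notin C$ contribute. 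For such an edge $\s_z\s_w = -\s_w$ while $\s'_z\s'_w = +\s_w$, so the contribution to the exponent is $\b(-\s_w - \s_w) = -2\b\s_w$. Since $\s\sim C$ forces $\s_w=+$ for $w\in\partial_V C\cap U$, and $w\in\partial_V B$ also has $\s_w = +$ (the $(+)$-b.c. on $\partial_V B$), every boundary edge contributes $-2\b$. Hence $\m_{U}^{-}(\s)/\m_{U}^{-}(\s') = e^{-2\b|\partial_E C|}$. (3) Apply the growing-graph bound $|\partial_E C|\geq g|C|$. (4) Sum over the (at most one, since the map $\s\mapsto\s'$ restricted to $\{\s\sim C\}$ is injective into $\Omega_U^-$) preimages: $\m_U^-(\s\sim C) = \sum_{\s\sim C}\m_U^-(\s) = e^{-2\b|\partial_E C|}\sum_{\s\sim C}\m_U^-(\s') \leq e^{-2g\b|C|}\sum_{\s'}\m_U^-(\s') \leq e^{-2g\b|C|}$.

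The step I expect to be the main obstacle — or at least the one needing the most care — is (3), namely extracting the clean bound $|\partial_E C|\geq g|C|$ from Definition \ref{def:growing}. The growing property is stated vertex-by-vertex as a statement about $N_x\cap L_{r+1}$ versus $N_x\cap B_r$, not directly about edge boundaries of arbitrary connected sets; one must check that for a connected $C\subset U$, assigning to each $z\in C$ its $g$ "surplus" outward edges (those going to strictly higher levels) and verifying these are genuinely in $\partial_E C$ and not over-counted gives the linear bound. I would argue: for $z\in C\cap L_r$, among its $\geq |N_z\cap B_r| + g$ neighbours in $L_{r+1}$, at most $|N_z\cap C\cap L_{r+1}|$ lie in $C$; combined with the fact that neighbours in lower levels $B_r$ may or may not lie in $C$, a short counting argument over all levels shows the number of edges from $C$ to $U\setminus C$ (plus edges from $C$ to $\partial_V B$) is at least $g|C|$. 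This is where the structure of a \emph{ball} in a growing graph (as opposed to an arbitrary finite set) is used, and it is the place the proof genuinely differs from the tree case.
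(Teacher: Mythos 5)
Your overall skeleton (flip the spins on $C$, use injectivity of $\s\mapsto\s'$, bound the partition function from below by the image configurations) is exactly the Peierls comparison used in the paper, but your step (2) contains a sign error that breaks the argument. The measure $\m_{U}^{-}$ carries $(-)$-boundary condition on $U^c=B_i\setminus S$ — only $\partial_V B$ carries $(+)$ — and a set $C\subset U$ containing vertices of $S\subset L_i$ necessarily has neighbours in $B_{i-1}$, and possibly in $L_i\setminus S$, i.e.\ in $U^c$. On an edge $(z,w)\in\partial_E C$ with $w\in U^c$ the outside spin is $\s_w=-$, so flipping $C$ from $(-)$ to $(+)$ \emph{raises} the energy along that edge: it contributes $+2\b$ to the exponent, not $-2\b$. (Already for $C=\{x\}$ with $x\in S$, the parents of $x$ in $L_{i-1}$ give such edges.) The correct ratio is therefore $e^{-2\b(|\partial_+ C|-|\partial_- C|)}$, where $\partial_+ C$ consists of the boundary edges whose outside endpoint carries a $(+)$ spin (those inside $U$, forced by the event $\s\sim C$, together with those into $\partial_V B$) and $\partial_- C$ of those whose outside endpoint lies in $U^c$.

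Consequently your geometric input $|\partial_E C|\ge g|C|$ — which is true and does follow from the growing property — is not the quantity you need: the estimate must be the signed bound $|\partial_+ C|-|\partial_- C|\ge g|C|$. This is precisely Lemma \ref{lemma:peierls} of the paper, proved by splitting $\partial_E C$ into downward edges (from $L_j\cap C$ to $L_{j+1}$, all of which land on $(+)$ vertices and hence lie in $\partial_+ C$) and the remaining edges (which contain all of $\partial_- C$), applying the growing property level by level to $C_j=C\cap L_j$, and telescoping: edges internal to $C$ that appear in some $\partial_E C_j$ cancel because they are counted once as downward and once as non-downward. So the missing ingredients are (a) the $\partial_\pm$ splitting forced by the mixed boundary condition of $\m_{U}^{-}$, and (b) the signed isoperimetric inequality of Lemma \ref{lemma:peierls}; with these inserted, your argument becomes the paper's proof.
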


The proof of Claim \ref{claim:misuracompon} is postponed to subsection
\ref{subsec:proofclaim}. Let us assume for the moment its validity
and complete the proof of the case $\ell=1$. By Claim
\ref{claim:misuracompon} and from the
definition of $K^{(\s)}$,  we get
 \be
 \m_{U}^{-}( K^{(\s)}=C)\leq e^{-2g\, \b |C| }\,.
 \ee
 We now recall the following Lemma due to Kesten (see
\cite{Kes}).
\begin{lemma}\label{lemma:Kesten} Let $G$ an infinite graph with maximum degree $\Delta$
and let $\mathcal C_p$ be the set of connected sets with $p$
vertices containing a fixed vertex $v$. Then $|\mathcal C_p|\leq
(e(\Delta+1))^p$.
\end{lemma}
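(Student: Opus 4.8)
\textbf{Proof plan for Kesten's Lemma (Lemma \ref{lemma:Kesten}).}
The plan is to bound the number of connected $p$-vertex subsets containing a fixed vertex $v$ by a counting argument based on spanning trees and depth-first (or breadth-first) traversal. First I would observe that every connected set $C$ with $|C|=p$ and $v\in C$ admits at least one spanning tree $T$ rooted at $v$. Since $T$ has $p$ vertices and $p-1$ edges, and each vertex of $G$ has degree at most $\Delta$, a rooted spanning tree on $p$ vertices inside $G$ can be encoded by a walk: perform a depth-first exploration of $T$ starting from $v$, traversing each of its $p-1$ edges exactly twice (once going down, once coming back up), for a total of $2(p-1)$ steps. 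At each step we move from the current vertex to one of its at most $\Delta$ neighbours in $G$, so the number of such walks — hence the number of rooted spanning trees realizable in $G$ — is at most $\Delta^{2(p-1)}\le \Delta^{2p}$.

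The next step is to pass from counting (set, spanning tree) pairs to counting sets. A single connected set $C$ may have many spanning trees, so the crude bound above already gives $|\mathcal C_p|\le \Delta^{2p}$; to obtain the sharper constant $e(\Delta+1)$ stated in the lemma one argues slightly more carefully. Here I would use the standard improvement: instead of counting all depth-first walks, one counts the sequence of ``choices'' made. At each of the $p-1$ forward steps we choose one of at most $\Delta$ neighbours, and we must also record, at each vertex, when the exploration backtracks; encoding the shape of the traversal tree on $p$ vertices costs a Catalan-type factor. More precisely, the number of plane rooted trees on $p$ vertices is $C_{p-1}=\frac{1}{p}\binom{2p-2}{p-1}\le 4^{p-1}$, and given the abstract tree shape, each of its $p-1$ edges is labelled by one of at most $\Delta$ choices of neighbour in $G$. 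This yields $|\mathcal C_p|\le 4^{p-1}\Delta^{p-1}$, which one can absorb into $(e(\Delta+1))^p$ using $4\le e(\Delta+1)/\Delta\cdot$(something); alternatively, the cleanest route giving exactly the claimed bound is the classical argument of Kesten, which I would simply cite and then reproduce in the form: order the vertices of $C$ by the order of first visit in a breadth-first search from $v$, and note that the $j$-th vertex is a neighbour of one of the first $j-1$ vertices, giving at most $(j-1)\Delta$ choices, so $|\mathcal C_p|\le \prod_{j=2}^{p}(j-1)\Delta = (p-1)!\,\Delta^{p-1}$; this is too large, so the correct bookkeeping instead fixes the BFS-tree structure first (cost $\le 4^p$ as above) and then the edge labels (cost $\le \Delta^{p-1}$).

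Putting the pieces together, the bound $|\mathcal C_p|\le 4^{p-1}\Delta^{p-1}\le (e(\Delta+1))^p$ follows since $4\Delta\le e(\Delta+1)$ fails for small $\Delta$ but the slack from the $(e(\Delta+1))^p$ versus $(4\Delta)^{p-1}$ (an extra factor $e(\Delta+1)$) comfortably covers the discrepancy for all $\Delta\ge 1$; one checks $4^{p-1}\Delta^{p-1}=\tfrac14(4\Delta)^p\cdot 4^{-1}\Delta^{-1}\le (e(\Delta+1))^p$ directly. The main obstacle is getting the constant exactly right rather than merely obtaining \emph{some} exponential bound $K^p$: a purely naive walk-counting argument gives $(e\Delta)^p$ or $(4\Delta)^p$ but obtaining the precise constant $e(\Delta+1)$ requires the refined ``plane-tree shape plus edge-labels'' decomposition (or invoking Kesten's original estimate verbatim, which is the route I would take in the write-up, since the precise constant is not essential for the applications in this paper — any bound of the form $C^{|C|}$ suffices).
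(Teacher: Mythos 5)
The paper never proves Lemma \ref{lemma:Kesten}: it is quoted verbatim from Kesten's book \cite{Kes} and used as a black box, so there is no internal proof to compare against, and your closing suggestion to simply cite Kesten is in fact exactly what the paper does. Judged as a self-contained proof, however, your write-up has a genuine gap in the last step. The encodings themselves are fine and do yield an exponential bound: the depth-first walk argument gives $|\mathcal C_p|\le \Delta^{2(p-1)}$, and the refined ``plane-tree shape plus neighbour labels'' encoding gives $|\mathcal C_p|\le C_{p-1}\Delta^{p-1}\le (4\Delta)^{p-1}$, since a spanning tree determines its vertex set. But the concluding claim that $4^{p-1}\Delta^{p-1}\le (e(\Delta+1))^{p}$ is false precisely in the regime relevant here: $4\Delta> e(\Delta+1)$ as soon as $\Delta\ge 3$ (your parenthetical has the direction reversed: $4\Delta\le e(\Delta+1)$ holds only for $\Delta\le 2$), so $\bigl(\tfrac{4\Delta}{e(\Delta+1)}\bigr)^{p-1}\to\infty$ as $p\to\infty$ and the single spare factor $e(\Delta+1)$ cannot absorb the discrepancy. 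What you have actually proved is a bound of the form $K(\Delta)^{p}$ with $K=4\Delta$, not the constant stated in the lemma.

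Two remarks. First, for the use made of the lemma in the paper the constant is immaterial: all that is needed is $|\mathcal C_p|\le e^{\delta p}$ for some $\delta=\delta(\Delta)$, and replacing $\delta=1+\log(\Delta+1)$ by $\log(4\Delta)$ only shifts the threshold $\beta>\delta/(2g)$ by a constant, so your argument would serve the application even though it does not prove the statement as written. Second, if you want the stated constant, a clean route is to fix an ordering of the neighbours of each vertex of $G$ and map every subtree of $G$ rooted at $v$ injectively into the infinite rooted tree in which each vertex has exactly $\Delta$ children (each child is sent to the slot given by its neighbour index at its parent). The number of $p$-vertex subtrees of that tree containing the root is the Fuss--Catalan number
\begin{equation*}
\frac{1}{(\Delta-1)p+1}\binom{\Delta p}{p}\;\le\;\binom{\Delta p}{p}\;\le\;(e\Delta)^{p}\;\le\;\bigl(e(\Delta+1)\bigr)^{p},
\end{equation*}
and since connected sets inject into rooted subtrees via a choice of spanning tree, this gives the lemma with the claimed constant.
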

\no Applying Lemma \ref{lemma:Kesten} to the set $\cK_p$, we obtain
the bound $|\cK_p|\leq e^{\d p}$, with $\d=1+\log(\Delta+1)$.
Continuing from (\ref{misuraspinnegativo}), we finally get that for
all $\b'=2g\b-\d>0$,
i.e. for all $\b> \frac{\d}{2g}$,
 \bea\label{caso1}
\m_{U}^{-} (\s_x=-)&\leq& \sum_{p\geq 1}
\,\,\sum_{C\in\, \mathcal K_{p}}
e^{-2g\, \b p}\nonumber\\
&\leq& \sum_{p\geq 1} e^{-2g\b p} e^{\d p}\nonumber\\
&\leq& c e^{-\b'}
\eea
 which concludes the proof of (\ref{propdecadimento}) in the case
 $\ell=1$.

Notice that the argument above only involves the spin at $x$, and
thus applies for all pairs of $x,y\in L_i$, independently of
their distance. Anyway,  when $d(x,y)>1$ this method does
not provide the decay with the distance stated in Proposition
\ref{propdecadimento}, and  a different approach is required.

\subsubsection{Correlation decay:  the case $\ell>1$}\label{subsect:corrdecaygeneral}
Let us now consider two vertices $x\in S \subset L_i$ and
$y\in L_i\setminus S$, such that
$d(x,y)=\ell>1$.  Before defining new objects, we want to clarify
the main idea beyond the proof. Since the measure $\m^\t_{U}$
fixes the configuration on all sites in $U^c\equiv
B_i\setminus S$, the vertex $y$ can communicate with $x$ only
through paths going from $x$ to $y$ and crossing vertices in $U$.
However, the effect of this communication can be very small compared
to the information arriving to $x$ from the $(+)$-boundary. In
particular, if every path starting from $y$ crosses a $(+)$-spin
before arriving to $x$, then the communication between them is
interrupted. Let us formalize this assertion.

We denote by  $\mathcal C$ the set of connected subsets $C\subseteq
U\cup \{y\}$ such that $y\in C$, and call an element $C\in\mathcal
C$ a \emph{component} of $y$.
For every configuration $\s\in\Omega^+$, we define $C^{(\s)}$ as the
maximal component of $y$ which is negative on $C^{(\s)}\cap U$,
i.e
 \be\label{def:negcomponent2}
 C^{(\s)}\in
\mathcal C \,\,\, \mbox{ s.t. } \left\{
\begin{array}{ll}
\s_z =- &  \forall\, z\in C^{(\s)}\cap U\\
\s_z =+ & \forall\, z\in \partial_VC^{(\s)}\cap U
\end{array}\right.\,.
\ee
Observe that the spin on $y$ is not fixed under the event
 $\{\s:\,C^{(\s)}=C\}$.
 Finally, let
 $\mathcal C^\emptyset :=\{C\in\mathcal C\,\mbox{ s.t. }\, x\not\in C\}$
and define the event
\be\label{def:eventA}
A\,:=\,\{\s:\,
C^{(\s)}\in\cC^\emptyset\}\,=\,\bigsqcup_{C\in\cC^\emptyset}\{\s:\,
C^{(\s)}=C\}\,.
\ee
Then we have
\begin{eqnarray}\label{condizioneA}
\m_{U}^{y,-}(\s_{x}=+\,|\,A)&=& \sum_{C\in\mathcal
C^\emptyset}\m_{U}^{y,-}(\s_{x}=+,C^{(\s)}
=C\,|\,A)\nonumber\\
&=& \frac{\sum_{C\in\mathcal
C^\emptyset}\m_{U}^{y,-}(\s_{x}=+,C^{(\s)} =C\,)}{\sum_{C\in
\mathcal
C^\emptyset}\m_{U}^{y,-}(\,C^{(\s)} =C\,)}\nonumber\\
&=& \frac{\sum_{C\in \mathcal
C^\emptyset}\m_{U}^{y,-}(\s_{x}=+\,|\,C^{(\s)}
=C\,)\m_{U}^{y,-}(\,C^{(\s)}=C\,)}{\sum_{C\in \mathcal
C^\emptyset}\m_{U}^{y,-}
(\,C^{(\s)} =C\,)}\nonumber\\
&\geq& \min_{C\in \mathcal
C^\emptyset}\m_{U}^{y,-}(\s_{x}=+\,|\,C^{(\s)}=C)\,.
\end{eqnarray}

 Notice that when the measure
 $\m_{U}^{y,-}$ is conditioned on the event
 $\{\s:\,C^{(\s)}=C\}$, the spin
 configuration on $\partial_V C$ is completely  determined by the boundary
 condition:
  on $\partial_VC\cap U$ it is given by all
 $(+)$-spins and on $\partial_VC\cap U^c$ it corresponds to $\t^{y,-}$.
 Hence,  spins on  $U\setminus (C\cup\partial_V C)$
 become independent of spins on $C$, and  we get
\bea\label{stocdom}
\m_{U}^{y,-}(\,\cdot\,|\,C^{(\s)}=C)&=& \m_{K_x}^{y,-}(\,\cdot\,|
\s_z=+\,,\,z\in \partial_VC \cap U)
\nonumber\\
&=& \m_{U}^{y,+}(\,\cdot\,| \s_z=+\,,\,z\in (C\cup\partial_VC)\cap U)
\nonumber\\
&\geq& \m_{U}^{y,+}(\,\cdot\,)\,,\eea
 where the last inequality follows by
stochastic domination. Being $\{\s:\,\s_x=+\}$ an increasing event,
and from (\ref{condizioneA}) and (\ref{stocdom}),
  we get
$$
\m_{U}^{y,-}(\s_{x}=+\,|\,A)\geq \m_{U}^{y,+}(\s_{x}=+)\,,
$$
 which with the obvious fact that
$\m_{U}^{y,-}(\s_{x}=+)\,\geq
\m_{U}^{y,-}(\s_{x}=+\,|\,A)\,\m_{U}^{y,-}(A)\, $, implies
\be\label{path}
\m_{U}^{y,+}(\s_{x}=+)-\m_{U}^{y,-}(\s_{x}=+)
\,\leq\, \m_{U}^{y,-}(A^c) \,.
\ee
By monotonicity and being  $A^c$ a decreasing event, we get
the inequality $\m_{U}^{y,-} (A^c)\leq \m_{U}^{-} (A^c)\,$, where, we recall,
$\m_{U}^{-}$ denotes the measure on $U$ conditioned on
having all $(-)$-spins on $U^c$. We now focus on $\m_{U}^{-}
(A^c)$.\vspace{3mm}

Let $\mathcal C^{\ne\emptyset}$  denote the set of components of $y$
containing $x$, and for every $p\in\N$, let
$\mathcal C_p$ be the set of components in $\mathcal C^{\ne\emptyset}$ with
$p$ vertices, i.e
$$
\mathcal C_{p} :=\{C\in\mathcal C^{\ne\emptyset} \,\mbox{ s.t. }\, |C|=p \}
\,\quad\quad \mathcal C^{\ne\emptyset} := \bigsqcup_{p> 0} \mathcal C_p.
$$
Notice that if $C\in\mathcal C^{\ne\emptyset}$, then
$|C|\geq \ell +1$, since $d(x,y)=\ell$.
Thus, $A^c$
can be expressed by means of disjoint events as
\be\label{def:eventAC}
A^c\,=\,\bigsqcup_{p\geq \ell+1} \bigsqcup_{C\in\,\mathcal
C_{p}}\{\s:\,C^{(\s)}\,=\,C\}\,,
\ee
 and we get
\be\label{misuraAc}
 \m_{U}^{-} (A^c)\,=\,\sum_{p\geq
\ell +1}\,\,\sum_{C\in\, \mathcal C_{p}}\m_{U}^{-}(
C^{(\s)}=C)\,. \ee
Since $\partial_V (C\setminus\{y\})\cap U \subseteq \partial_V C\cap U$,
we observe that the event $\{\s: C^{(\s)}=C\}\equiv\{\s:\,
\s_{C\setminus\{y\} }=-\,,\,
\s_{\partial_V C\cap U}=+ \}$ is a subset of $\{\s:\,
\s_{C\setminus\{y\}}=-\,,\,\s_{\partial_V (C\setminus\{y\})\cap U}=+ \}
\equiv\{\s:\s\sim C\setminus\{y\} \}$.
Applying  the
result stated in Claim \ref{claim:misuracompon} to the set
$C\setminus\{y\} $, we obtain the bound
 \be\label{misuraC2}
 \m_{U}^{-}(C^{(\s)}=C)\leq e^{-2g\, \b (|C|-1) }\,,
 \ee
 which holds under the same hypothesis of the claim.
Continuing from (\ref{misuraAc}), we
then have that for all $\b'=2g\b-\d>0$, i.e. for all $\b> \frac{\d}{2g}$,
\bea\label{caso2}
\m_{U}^{-} (A^c)&\leq& \sum_{p\geq
\ell+1}\,\,\sum_{C\in\, \mathcal
C_{p}}e^{-2g\, \b (p-1)}\nonumber\\
&\leq& e^{\d} \sum_{p\geq \ell} e^{-(2g \b-\d) p} \nonumber\\
&\leq& c e^{-\b'\ell},
\eea
where in the second line we used the
bound $|\mathcal C_{p}|\leq e^{\d p}$ due to Lemma \ref{lemma:Kesten}.
This concludes the proof of Proposition \ref{propdecadimento}.
In the next subsection we will go back and prove
Claim \ref{claim:misuracompon}.

\subsection{Proof of Claim
\ref{claim:misuracompon}}\label{subsec:proofclaim}
 To estimate the probability $\m_{U}^{-}(\s\sim C )$, we
now appeal to a kind of Peierls argument that runs as
 follows (see also \cite{JS}).
Given a subset $C\subseteq U$,  we  consider the edge
boundary $\partial_E C$  and define

\be\label{def:boundaries}
\begin{array}{l}
\partial_+
C\,:=\,\{e=(z,w)\in \partial_E C\,:\,z, \,w \in U \}\\
\partial_- C\,:=\,\{e=(z,w)\in \partial_E C\,:\,z \mbox{ or } w \in
U^c \}\end{array}.
\ee
The meaning of this
notation can be better understood if we consider a configuration
$\s\in\Omega_{U}^-$ such that $C^{(\s)}=C$ (see
(\ref{def:negcomponent2})). In this case $\s$ has $(-)$-spins on
both the end-vertices of every edge in $\partial_- C$ and a
$(+)$-spin in one end-vertex of every edge in $\partial_+ C$.
Similarly if we consider $\s$ such that $K^{(\s)}=C$ (see
(\ref{def:negcomponent})).

 For every $\s\in\Omega_{U}^{-}$ such that
$\s\sim C$, let $\s^*\in \Omega_{U}^{-}$ denote the configuration
obtained by a global spin flip of $\s$ on the subset $C$, and
observe that the map $\s\rightarrow\s^*$ is injective. This flipping
changes the Hamiltonian contribute of the interactions just along
the edges in $\partial_E C$. In particular $\s^*$ loses the positive
contribute of the edges in $\partial_+ C$ and gains the contribute
of the edges in $\partial_- C$, and then we get
\be\label{hamilt}
H_{U}^-(\s^*)\,=\,H_{U}^-(\s)-2(|\partial_+ C
|-|\partial_- C|)\,.
\ee
 From this, we have
 \bea\label{mispathlunghi}
 \m_{U}^-(\s\sim C)&=&\sum_{\{\s: \s\sim C\}}
\frac{e^{-\b H_{U}^-(\s)}}{Z_{U}^-}\nonumber\\
&\leq& \frac{\sum_{\{\s: \s\sim C\}}e^{-\b
H_{U}^-(\s)}}{\sum_{\{\s: \s\sim C\}}e^{-\b H_{U}^-(\s^*)}}\nonumber\\
&=& e^{-2\b(|\partial_+
C|-|\partial_- C|)}\,,
\eea
 where in the first inequality we reduced the partition function to a summation
 over
$\{\s: \s\sim C\}$ and then we applied (\ref{hamilt}).

The following Lemma concludes the proof of Claim \ref{claim:misuracompon}.
\begin{lemma}\label{lemma:peierls}
Let $G$ a $(g,o)$-growing graph with maximal degree $\Delta$.
Then, for every subset $C\subseteq U$,
 \be\label{differenza}
 |\partial_+ C|-|\partial_- C| \geq g |C|\,. \ee
\end{lemma}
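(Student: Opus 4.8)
The plan is to reduce the edge-count inequality $|\partial_+ C| - |\partial_- C| \geq g|C|$ to a vertex-by-vertex estimate coming directly from the growing property \eqref{growing}. First I would partition $C$ according to the levels of the graph: write $C_r := C \cap L_r(o)$ for each $r$, so that $C = \bigsqcup_r C_r$. The key observation is that for a vertex $x \in C_r$, its neighbours split into those in $L_{r+1}(o)$ (the ``outward'' neighbours) and those in $B_r(o) = B_{r-1}(o) \cup L_r(o)$ (the ``inward and sideways'' neighbours), and by Definition \ref{def:growing} the excess of outward over inward-and-sideways neighbours is at least $g$. I want to convert this into a statement about edges of $\partial_E C$ versus edges internal to $C$, with signs.

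The main step is a counting identity. For each $x \in C$, let $d^{out}(x) = |N_x \cap L_{r+1}(o)|$ and $d^{in}(x) = |N_x \cap B_r(o)|$ when $x \in L_r(o)$, so that \eqref{growing} gives $d^{out}(x) - d^{in}(x) \geq g$ for every $x$. Summing over $x \in C$,
\be\label{eq:sumgrowing}
\sum_{x \in C} \bigl( d^{out}(x) - d^{in}(x) \bigr) \geq g|C|.
\ee
Now I would interpret each edge's contribution to the left-hand side. An edge $e = (z,w)$ with $z \in L_r$, $w \in L_{r+1}$ (an edge between consecutive levels) contributes $+1$ to $d^{out}(z)$ and $-1$ to $d^{in}(w)$; an edge inside a single level $L_r$ contributes $-1$ to $d^{in}(z)$ and $-1$ to $d^{in}(w)$. (There are no edges skipping a level, since $d$ is the graph distance.) Hence, if for a subset $C$ I let $a$ be the number of consecutive-level edges of $\partial_E C$, $b$ the number of same-level edges of $\partial_E C$, $a'$ the number of consecutive-level edges internal to $C$, and $b'$ the number of same-level edges internal to $C$, a bookkeeping of how each such edge is counted (once if exactly one endpoint is in $C$, twice if both are) shows that the left-hand side of \eqref{eq:sumgrowing} equals $a - b - 2b'$ — the internal consecutive-level edges cancel because each is counted $+1$ from its lower endpoint and $-1$ from its upper endpoint. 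Since $b, b' \geq 0$, this gives $a \geq g|C|$. Finally, every consecutive-level boundary edge $e=(z,w)$ with $z \in C \subseteq U$ either has $w \in U$, hence $e \in \partial_+ C$, or has $w \in U^c$, hence $e \in \partial_- C$; in either case it is counted in $|\partial_+ C| + |\partial_- C|$, but I need the signed quantity. Re-examining: the consecutive-level boundary edges going \emph{outward} from $C$ (from a lower endpoint in $C$ to a higher endpoint outside $C$) land in $\partial_+ C$ as long as the outer endpoint is in $U$, while edges whose \emph{higher} endpoint is in $C$ and lower endpoint outside also occur; the precise matching of ``$a$'' with $|\partial_+ C| - |\partial_- C|$ is what makes the sign work, using that $U^c = B_i \setminus S$ sits at levels $\leq i$ so edges to $U^c$ are never the ``outward'' edges counted positively.

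The hard part will be exactly this last bookkeeping: making sure the signed edge count $a - b - 2b'$ is bounded above by $|\partial_+ C| - |\partial_- C|$ rather than merely by $|\partial_E C|$, i.e. tracking which boundary edges carry a $+$ and which carry a $-$ relative to the radial structure, and checking that same-level boundary edges and internal edges only help (contribute nonpositively to the left side, hence can be dropped). I expect the definitions in \eqref{def:boundaries} — $\partial_+ C$ are the boundary edges staying inside $U$, $\partial_- C$ those reaching $U^c$ — to align precisely with ``outward consecutive-level boundary edges'' and ``inward ones'', because $U^c = B_i \setminus S$ lies at small radii; once that alignment is verified, the inequality $|\partial_+ C| - |\partial_- C| \geq a \geq g|C|$ (possibly after absorbing the $-b-2b'$ terms, which only decrease the left side of \eqref{eq:sumgrowing}) follows, completing the proof.
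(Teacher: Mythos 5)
Your proof is essentially the paper's: the paper likewise classifies edges radially (downward boundary edges versus the rest), bounds the signed count level by level via the growing property and telescopes, while you sum the vertex-wise inequality $d^{out}(x)-d^{in}(x)\ge g$ over all of $C$ at once --- the same computation carried out in one step. The only slip to repair is that the left-hand side of your summed inequality equals the \emph{signed} count (downward boundary edges minus upward boundary edges) minus $b+2b'$, not $a-b-2b'$ with $a$ the total number of consecutive-level boundary edges; as you anticipate, this is harmless because every downward boundary edge has its outer endpoint at level $\ge i+1$, hence in $F_{i+1}\subseteq U$, so it lies in $\partial_+C$, whereas every negatively counted boundary edge is at worst in $\partial_-C$, giving $|\partial_+C|-|\partial_-C|\ge(\mbox{signed count})\ge g|C|$ exactly as in the paper.
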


\begin{proof}
For a subset $C\subseteq U $, we define the
downward boundary of $C$, $\partial_\downarrow C$, as the edges
of $\partial_E C$ such that the endpoint in $C$ is in a higher level
(strictly small index) than the endpoint not in $C$, i.e.
$$
\partial_\downarrow C =\{(u,v)\in \partial_E C:
\exists j \mbox{ s.t. } u\in L_j\cap C,\,v\in{L_{j+1}}\}\,.
$$
We then define the not-downward boundary of $C$, $\partial_\Rsh C$,  as the edges in
$\partial_E C$ which are not-downward edges, i.e.
$\partial_\Rsh C= \partial_E C\setminus \partial_\downarrow C$.\\
Notice that $\partial_\downarrow C\subseteq \partial_+ C$, while
$\partial_\Rsh C\supseteq \partial_- C$. In particular, inequality (\ref{differenza})
follows from the bound
\be\label{bordi}
|\partial_\downarrow C|-|\partial_\Rsh C|\geq g|C|\,.
\ee
For all $j\geq 0$, define  $C_j= C\cap L_j$  and notice that,
by the growing property of $G$,
\be\label{bordiparziali}
|\partial_\downarrow C_j|-|\partial_\Rsh C_j|\geq g|C_j|\,.
\ee
Moreover, one can easily realize that
\be\label{telescopica}
|\partial_\downarrow C|-|\partial_\Rsh C|= \sum_{j\geq 0}
|\partial_\downarrow C_j|-|\partial_\Rsh C_j|\,.
\ee
In fact, all edges belonging to  $\partial_E C_j$, for some $j$,
but not belonging to $\partial_E C$, are summed once as downward edges
and  subtracted once as non-downward edges.
In conclusion, the last two inequalities imply bound (\ref{bordi})
and conclude the proof of the lemma.
\end{proof}

\section{Fast mixing inside the plus phase}
In this section we will prove  that the spectral gap of the Glauber
dynamics, in the situation described by Theorem \ref{teo}, is
bounded from zero uniformly in the size of the system. From
Definition \ref{cgap} of spectral gap, this is equivalent to showing
that for all inverse temperature $\b\gg1$, the Poincar\'{e}
inequality
$$
\var(f)\leq c\, \mathcal{D}(f)\,,\quad \forall f\in L^2(\Omega^+,\cF,\m)
$$
holds with constant $c$  independent of the size of $B$.

First, we  give a brief sketch of the proof. The rest of the section
is divided into two parts. In the first part, from the mixing
property deduced in section \ref{sec:mixing}
 and by means of coupling techniques, we
derive a Poincar\'{e} inequality for some suitable marginal Gibbs
measures. Then, in the second part, we will run a recursive argument
that together with some estimates, also derived from Proposition
\ref{propdecadimento}, will yield the Poincar\'{e} inequality for
the global Gibbs measure $\m$.

\subsection{Plan of the Proof}\label{subsec:sketchproof}

Let us first recall the following decomposition property of the
variance which holds for all subsets $D\subseteq C \subseteq B$,
\be\label{pro}
\v_C ^{\n}(f)\,=\,\m_C^{\n}[\v_D(f)]+\v_C ^{\n}[\m_D(f)]\;.
\ee
Applying recursively (\ref{pro}) to subsets $B\equiv
F_0\supset F_1\supset\ldots\supset F_{m+1}=\emptyset$ and recalling
the relations $\m_i(\m_{i+1}(f))\,=\,\m_{i}(f)$ and
$\m_{m+1}(f)=f$, we obtain
\begin{eqnarray}\label{ric1}
\v(f)&=& \m[\v_m(f)]+\v[\m_m(f)] \nonumber\\ &=&
\m[\v_m(\m_{m+1}(f))]+
\m[\v_{m-1}(\m_m(f))]+ \v[\m_{m-1}(\m_m(f))] \nonumber\\
&=& \vdots \nonumber\\ &=&
\sum_{i=0}^{m}\mu[\v_{i}(\m_{i+1}(f))]\,.
\end{eqnarray}
Notice that (\ref{ric1}) can also be seen as a decomposition
of the Martingale given by the set of variables
$\{\m_{i}(f)\}_{i\geq 0}$ respect to the filtration
$\{\mathcal{F}_{i}\}_{i\geq 0}$.

To simplify the notation we define $g_i\,:=\,\m_i(f)$ for all
$i=0,\ldots, m+1$. Notice that $g_i \in \cF_i$. Inserting $g_i$ in
(\ref{ric1}), we then have that
\be\label{ric2}
\v(f)\,=\,\sum_{i=0}^{m}\mu[\v_i(g_{i+1})]\,.
\ee

The proof of the Poincar\'{e} inequality for $\m$, with constant
independent of the size of the system, is given in the following two
steps:
\begin{enumerate}
\item Proving that  $\forall\,\t\in\Omega^+$ and
$i\in\{0,\ldots,m\}$, there exist suitable vertex-subsets $\{K_x\}_{x\in L_i}$,
$K_x\ni x$, such that the like-Poincar\'{e} inequality
\be\label{uno}
\v_{i}^\t(g_{i+1})\leq c\,\sum_{x\in L_i}\m_i^\t(\v_{K_x}
(g_{i+1}))
\ee
holds with
constant $c$ uniformly bounded in the size of $L_i$;
 \item Relating the  variance of $g_i=\m_i(f)$ to
the  variance of $f$ in order to get an inequality of the kind
\be\label{due}
\sum_{i=0}^m\sum_{x\in L_i}\m(\v_{K_x} (g_{i+1}))\leq c
\cD(f)+\varepsilon \sum_{i=0}^m\sum_{x\in L_i}\m(\v_{K_x} (g_{i+1}))
\ee
 with $\varepsilon$ a small quantity for $\b\gg 1$.
\end{enumerate}

Notice that from (\ref{due}) the inequality
$$
\sum_{i=0}^m\sum_{x\in L_i}\m(\v_{K_x} (g_{i+1}))\leq c(1-\varepsilon)^{-1}\cD(f)\,
$$
follows with $c(1-\varepsilon)^{-1}=\Omega(1)$ for all $\b\gg 1$. Together
with Eqs. (\ref{ric2}) and (\ref{uno}), this will establish the
required Poincar\'{e} inequality for $\m$ and therefore will
conclude the proof of Theorem \ref{teo}.

\subsection{ Step 1: From correlation decay to Poincar\'{e} inequality}
 In this section we prove that under the same
hypothesis of Proposition \ref{propdecadimento},  the marginal
of the conditioned Gibbs measure on some suitable  subsets,  satisfies a
Poincar\'{e} inequality with constant independent of the size of these
subsets.

To state the result, let us fix a subset $S\subseteq L_i$
and a configuration  $\t\in\Omega^+$. We then define the measure
\be\label{def:margmeas}
\nu_{S}^\t(\s)\,:=\,\sum_{\n: \n_S=\s_S}
\m(\n\,|\,\t\in \mathcal F_{B_{i}\setminus S})\,,
\ee
which is  the
marginal of the Gibbs measure $\m^\t_{F_{i+1}\cup S}$  on $S$,
and denote by $\v_{\nu_S^\t}$ the variance w.r.t. $\nu_S^\t$.
We state the following:
\begin{theorem}\label{teoPoin}
For all $\b\gg 1$ and for
every subset $S\subseteq L_i$, $\t\in \Omega^+$ and $f\in
L^2(\Omega, \cF_S, \nu_{S}^\t)$, the measure $\nu_{S}^\t$ satisfies
the Poincar\'{e} inequality
\be\label{poin}
\v_{\,\nu_{S}^\t}(f)\leq
c_0\sum_{x\in S} \nu_{S}^\t(\v_x (f))\,.
\ee
with
$c_0=c_0(\b, \Delta, g)=1+O(e^{-c\b})$.
\end{theorem}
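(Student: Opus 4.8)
The plan is to deduce the Poincar\'e inequality for the marginal measure $\nu_S^\t$ from the correlation decay of Proposition \ref{propdecadimento} by a standard coupling / recursive argument in the style of Martinelli--Sinclair--Weitz, but we must be careful because $S\subseteq L_i$ need not carry any intrinsic tree structure. First I would observe that it suffices to prove the ``single-block'' inequality with a good constant and then iterate: writing $S=\{x_1,\dots,x_k\}$ in some order, repeated use of the variance decomposition (\ref{pro}) gives
\be
\v_{\nu_S^\t}(f)=\sum_{j=1}^{k}\nu_S^\t\big[\v_{x_j}\big(\nu_{\{x_j,\dots,x_k\}}^\t(f)\big)\big]\,,
\ee
so it is enough to control each conditional variance $\v_{x_j}(\nu_{\{x_j,\dots,x_k\}}^\t(f))$ by $\v_{x_j}(f)$ plus a small correction. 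This is where the spatial mixing enters: conditioning on spins at the other sites of $S$ changes the law of $\s_{x_j}$ only through the probabilities $\m_U^{\t'}(\s_{x_j}=+)$ for various boundary configurations $\t'$, and Proposition \ref{propdecadimento} says these differ by at most $c\,e^{-\b' d(x_j,\cdot)}$.

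The key quantitative step is to translate the pointwise correlation bound into a bound on how much the single-site conditional variance can be inflated by conditioning on the rest of $S$. Concretely, using the coupling characterization of $\m_U^\t(\s_x=+)$ and the bound of Proposition \ref{propdecadimento}, I would show that for any two boundary conditions $\t,\t'$ differing only on $L_i\setminus S$,
\be
\big|\nu_S^\t(g)(\o)-\nu_S^{\t'}(g)(\o')\big|\ \le\ \sum_{z}\big|\nu_S^\t(g)(\o)-\nu_S^{\t}(g)(\o^z)\big|\cdot\big(\text{disagreement prob.}\big)\,,
\ee
and sum the geometric series; since $G$ is $(g,o)$-growing with maximal degree $\Delta$, the number of sites at distance $r$ grows at most like $\Delta^r$ while the correlation decays like $e^{-\b' r}$ with $\b'=2g\b-\d$, so for $\b\gg1$ (precisely $\b'>\log\Delta$, which is implied by $\b\gg1$) the series $\sum_r \Delta^r e^{-\b' r}$ converges and is $O(e^{-c\b})$. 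Feeding this into the telescoping sum above yields
\be
\v_{x_j}\big(\nu_{\{x_j,\dots,x_k\}}^\t(f)\big)\ \le\ (1+O(e^{-c\b}))\,\nu_{\{x_j,\dots,x_k\}}^\t\big(\v_{x_j}(f)\big)\,,
\ee
and reassembling the decomposition gives (\ref{poin}) with $c_0=1+O(e^{-c\b})$. The constant is manifestly independent of $|S|$ because the geometric series is controlled purely by $\Delta$ and $\b$.

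The main obstacle I expect is the second displayed step --- making the passage ``from pointwise correlation decay to a contraction on conditional expectations'' fully rigorous in the presence of cycles. On a tree one can condition along the canonical path from $x_j$ to the rest of $S$ and the influences genuinely factorize; here one must instead run a disagreement-percolation / influence-matrix argument, bounding the influence of a flip at a far site $z\in S$ on $\s_{x_j}$ by a sum over self-avoiding paths or by the Dobrushin-type influence coefficients, and then check that the resulting influence matrix has norm $O(e^{-c\b})$ using the growing property to offset the branching. A secondary technical point is that $\nu_S^\t$ is only the \emph{marginal} on $S$ of $\m_{F_{i+1}\cup S}^\t$, so its single-site conditional distributions are themselves averages over the configuration on $F_{i+1}$; one must verify that Proposition \ref{propdecadimento}, which is stated for $\m_U^\t(\s_x=+)$ with $U=F_{i+1}\cup S$, already delivers exactly the bound needed for these marginal conditional probabilities --- which it does, since the event $\{\s_x=+\}$ depends only on the coordinate $x\in S$ and the proposition bounds precisely $|\m_U^\t(\s_x=+)-\m_U^{\t^y}(\s_x=+)|$ for $y\in L_i\setminus S$, the disagreement site.
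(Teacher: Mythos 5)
Your overall strategy is different from the paper's, and the step you yourself flag as ``the main obstacle'' is precisely where the argument is incomplete. You decompose $\v_{\nu_S^\t}(f)$ as a martingale sum over an arbitrary ordering of $S$ and then assert the contraction
$\v_{x_j}\bigl(\nu_{\{x_j,\dots,x_k\}}^\t(f)\bigr)\le (1+O(e^{-c\b}))\,\nu_{\{x_j,\dots,x_k\}}^\t\bigl(\v_{x_j}(f)\bigr)$.
This does not follow from summing the geometric series $\sum_r \D^r e^{-\b' r}$ of disagreement probabilities. The gradient $\nabla_{x_j}$ of a conditional expectation splits as
$\nabla_{x_j}\,\m^{\cdot}(f)=\m^{\t^+}(\nabla_{x_j}f)-\m^{\t^-}(h_{x_j},f)$,
i.e.\ flipping $\s_{x_j}$ changes both the integrand and the conditioning measure, and the second (covariance) term is not pointwise small: it must be bounded by a weighted sum of the local variances $\v_{x_l}(f)$ at the \emph{other} sites, with weights that one then has to show sum to $O(e^{-c\b})$ after reassembling the decomposition. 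That is a genuine second layer of argument (it is exactly what the paper's Section 4.3 does across levels, introducing the density $h_x$, bounding $\|h_x\|_\infty$ and $\v(h_x)$, and running a telescopic/recursive covariance estimate), and your sketch of a ``disagreement-percolation / influence-matrix'' bound does not supply it. As written, the proposal reduces the theorem to an unproved inequality of essentially the same difficulty.

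The paper avoids this entirely by a dynamical route: it introduces the heat-bath Glauber dynamics on $\Omega_S$ reversible w.r.t.\ $\nu_S^\t$, whose jump rates are $c_x(\n,a)=\m_{K_x}^\n(\s_x=a)$ with $K_x=F_{i+1}\cup\{x\}$ (so the interior $F_{i+1}$ is already marginalized out), and whose Dirichlet form is the right-hand side of (\ref{poin}). For two initial configurations differing at a single site $y$, the optimal one-site coupling produces a disagreement at $x$ with probability $|\m_{K_x}^\n(\s_x=+)-\m_{K_x}^{\n^y}(\s_x=+)|\le c e^{-\b' d(x,y)}$ by Proposition \ref{propdecadimento}, so the expected Hamming distance contracts at rate $\a=1-ce^{-\b'}>0$ once $\b'>\log\D$; path coupling extends this to arbitrary pairs, and the standard eigenfunction argument converts the contraction into $\cg(\nu_S^\t)\ge\a$, i.e.\ $c_0=\a^{-1}=1+O(e^{-c\b})$, uniformly in $|S|$. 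Here the correlation decay is used only through the one-step disagreement probability, so no covariance term ever appears. If you want to salvage your static route, you would need to carry out the covariance analysis explicitly (or invoke a Dobrushin--Shlosman type theorem giving Poincar\'e from smallness of the influence matrix), which is considerably heavier than the coupling argument.
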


\begin{remark}\label{remark}
Before proceeding  with the proof of Theorem \ref{teoPoin}, we point
out that this result includes, as a particular case, inequality
(\ref{uno}) for subsets $K_x= F_{i+1}\cup \{x\}$.
 To see that, choose $S=L_i$ so that
 $\m(\cdot\,|\mathcal F_{B_{i}\setminus S})\equiv\m_i$.
An easy computation shows that, for every function $f\in\mathcal F_{i+1}$,
 $\nu_{L_i}^\t(f)\equiv\m_i^\t(f)$ and
 $\nu_{L_i}^\t(\v_x(f))= \m_i^\t(\v_{K_x} (f))$.
Inequality (\ref{poin}) then corresponds
to the like-Poincar\'{e} inequality
$$
\v_{i}^\t(g_{i+1})\leq c_0\,\sum_{x\in
L_i}\m_i^\t(\v_{K_x} (g_{i+1}))\,,
$$
which concludes the first step of the proof of
Theorem \ref{teo}.
\end{remark}

\subsubsection{Proof of Theorem \ref{teoPoin}}
The proof of Theorem \ref{teoPoin} rests on the so called
\emph{coupling technique}. This is a useful method to bound from above
the mixing time of Markov processes,  introduced for the first time
in this setting by Aldous \cite{Al} and subsequently refined to the
\emph{path coupling} \cite{BD, LRS}. See also \cite{Lin} for a wider
discussion on the coupling method.

 A \emph{coupling} of two measure
$\m_1$ and $\m_2$ on $\Omega$ is any joint distribution $\rho$ on
$\Omega\times\Omega$ whose marginal are $\m_1$ and $\m_2$
respectively. Here, we want to construct a coupling of two Glauber
dynamics on $\Omega_S$ with same reversible measure $\nu_S^\t$ but
different initial configurations.  We denote by $\mathcal L_S$ the
generator of this dynamics.  We also recall that for all
$\n\in\Omega_S^\t$, $x\in S$ and  $a\in\{\pm 1\}$, the jump rates of
the heat bath version of the dynamics (see Def.
\ref{rates} ) are given by
 \bea\label{rate2}
  c_x(\n, a)&=&\,
  \nu^\t_{S}(\s_x=a\,|\,\n\in \mathcal{F}_{S\backslash x})\nonumber\\
  &=& \m(\s_x=a\,|\,\n\in \mathcal{F}_{S\backslash x},
\t\in\mathcal F_{B_i\setminus S} )\nonumber\\
&=& \m_{K_x}^\n (\s_x=a)\,,
\eea
 where in the second line we applied
the definition  of $\nu_S^\t$  and used that
$\{\s_x=a\}\in \mathcal{F}_S$, and in the last line we
adopt the notation $K_x:=F_{i+1}\cup \{x\}$.

We now consider the coupled process $(\n(t),\xi(t))_{t\geq 0}$ on
$\Omega_S\times\Omega_S$ defined as follows. Given the  initial
configurations $(\n,\xi)$, we let the two dynamics evolve at the same
time and update the configurations at the same vertex. We then chose
the coupling jump rates $\tilde{c}_x ((\n,a),(\xi,b))$  to go from
$(\n,\xi)$ to $(\n^{x,a}, \xi^{x,b})$, with $a,b\in\{\pm 1\}$, as the
optimal coupling (see \cite{Lin}) between the jump rates
$\m_{K_x}^\n(\s_x=a)$ and $\m_{K_x}^\xi(\s_x=b)$. More explicitly,
for $a\in\{\pm 1\}$, they are given by %
\be\label{couplingrates}
\left\{
\begin{array}{l}
\tilde{c}_x((\n,a),
(\xi,a))=\min\{\m_{K_x}^\n(\s_x=a)\,;\,\m_{K_x}^\xi(\s_x=a)\}\hspace{2.5cm}\\
\tilde{c}_x((\n,a), (\xi,-a))=\max \{0\,;\,
\m_{K_x}^\n(\s_x=a)-\m_{K_x}^\xi(\s_x=a)\}\hspace{2.5cm}
\end{array}
\right.
\ee
We denote by $\widetilde{\cL}$ the generator of the coupled
process, and by $\widetilde P_t $ the correspondent Markov
semigroup. Notice that from our choice of coupling jump rates,
we get that the probability of
disagreement in $x$, after one update in $x$ of $(\n,\xi)$, is given by%
\be\label{def:probdisagree}
P_{dis}^{^x}(\n,\xi):=
|\m_{K_x}^\n(\s_x=+)-\m_{K_x}^\xi(\s_x=+)|\,.
\ee

 Let us now consider  the subset
$H\subset\Omega_S\times\Omega_S$ given by all couples of
configurations which differ by a single spin flip in some vertex of
$S$. One can easily verify that the graph
$(\Omega_S,\,H)$ is connected and that the induced
graph distance between configurations
$(\n,\xi)\in\Omega_S\times\Omega_S$,  $D(\n,\xi)$,
just corresponds to their Hamming
distance. Let us also  denote by $\mathbb E_{\n,\xi}[D(\n(t),
\xi(t))]$ the average
distance at time $t$ between two coupled configurations of the
process starting at $(\n,\xi)$.
 We claim
the following:
\begin{claim}\label{claim:coupling}
For all $\b\gg 1$, there exists
a positive constant $\a\equiv\a(\b,g,\Delta)$ such that, for  every initial
configurations $(\n,\xi)\in H$, the process
$(\n(t),\xi(t))_{t\geq0}$ satisfies the inequality \be
\frac{d}{dt}\,\mathbb
E_{\n,\xi}[D(\n(t),\xi(t))]\left|_{t=0}\right.\leq -\,\a \,.\ee
\end{claim}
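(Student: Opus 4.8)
Since $(\n,\xi)\in H$ differ at exactly one vertex, say $z\in S$, I would compute the right-hand derivative at $t=0$ of the mean Hamming distance directly from the generator $\widetilde{\cL}$ of the coupled process. The Hamming distance starts at $D(\n,\xi)=1$, and the only moves of the coupled chain that change it are updates at $z$ itself or at a neighbor $w\sim z$ with $w\in S$. An update at $z$ produces agreement with rate $\tilde c_z((\n,a),(\xi,a))$ summed over $a$ — i.e.\ with rate $1-P_{dis}^{z}(\n,\xi)$ — decreasing the distance by $1$; since $\n,\xi$ agree off $z$, we have $P_{dis}^{z}(\n,\xi)=0$ because the two heat-bath rates $\m_{K_z}^{\n}(\s_z=+)$ and $\m_{K_z}^{\xi}(\s_z=+)$ coincide (they depend only on the configuration on $K_z\setminus\{z\}$, where $\n$ and $\xi$ agree). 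Hence the ``good'' term contributes exactly $-1$ to the derivative. An update at a neighbor $w\sim z$, $w\in S$, can create a new disagreement with probability $P_{dis}^{w}(\n,\xi)$; each such event increases the distance by $1$. So
\be
\frac{d}{dt}\,\mathbb E_{\n,\xi}[D(\n(t),\xi(t))]\Big|_{t=0}
\;=\;-1+\sum_{\substack{w\in S\\ w\sim z}}P_{dis}^{w}(\n,\xi)\,.
\ee

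\textbf{Bounding the bad term.} The key input is Proposition \ref{propdecadimento}: for $w\sim z$ with $w\in S\subseteq L_i$, the configurations $\n$ and $\xi$ differ only at $z$, so by the definition (\ref{def:probdisagree}) of $P_{dis}^{w}$ and the notation $K_w=F_{i+1}\cup\{w\}$, the quantity $P_{dis}^{w}(\n,\xi)=|\m_{K_w}^{\n}(\s_w=+)-\m_{K_w}^{\xi}(\s_w=+)|$ is exactly a disagreement probability of the type controlled by Proposition \ref{propdecadimento} — a single boundary spin flip at $z$ affecting the spin at $w$, at distance $d(w,z)=1$. (Here one must match the measures: $\m_{K_w}^{\n}$ with $\n$ the boundary condition on $\partial_V K_w$ plays the role of $\m_U^\t$ in the proposition with $U=K_w$ and the two configurations agreeing outside $U$ except at $z$; strictly speaking one applies the $\ell=1$ estimate, Eq.\ (\ref{caso1}), giving the bound $c\,e^{-\b'}$ with $\b'=2g\b-\d$.) Therefore each term in the sum is at most $c\,e^{-\b'}$.

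\textbf{Conclusion and the main obstacle.} There are at most $\Delta-1\le\Delta$ neighbors $w$ of $z$, so
\be
\frac{d}{dt}\,\mathbb E_{\n,\xi}[D(\n(t),\xi(t))]\Big|_{t=0}
\;\le\;-1+\Delta\,c\,e^{-\b'}\,,
\ee
and for $\b\gg1$ the right-hand side is bounded above by $-\tfrac12$, say, so the claim holds with $\a=\a(\b,g,\Delta)=1-\Delta c e^{-\b'}>0$. The routine part is the generator bookkeeping; the step I expect to require the most care is the \emph{matching} of $P_{dis}^{w}(\n,\xi)$ to the hypothesis of Proposition \ref{propdecadimento}. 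One needs to check that the single-site marginal $\m_{K_w}^{\cdot}(\s_w=+)$, viewed as a function of the boundary data on $\partial_V K_w$, is governed by the conditioned measure $\m_U^\t$ of the proposition — i.e.\ that conditioning on $\t\in\cF_{B_i\setminus S}$ and then looking at the marginal on $S$ and the local rate at $w$ really does reduce to the setup ``$U=F_{i+1}\cup\{w\}$, $(+)$-boundary far away, a flip at the single site $z\in L_i$''. Once this identification is in place (it is essentially (\ref{rate2}) together with the definition of $\nu_S^\t$), the exponential smallness is immediate from the $\ell=1$ case of the proposition, and monotonicity/FKG are not even needed for this one-step estimate beyond what is already used there.
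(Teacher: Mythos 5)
Your generator bookkeeping and the $-1$ contribution from the update at the disagreement site $z$ are correct and match the paper (indeed $P_{dis}^{z}(\n,\n^{z})=0$ since $\m_{K_z}^{\n}$ depends on $\n$ only through $\partial_V K_z\not\ni z$). However, the restriction of the ``bad'' term to graph-neighbors $w\sim z$ is a genuine error. The coupled chain is the heat-bath dynamics for the \emph{marginal} measure $\nu_S^\t$, whose single-site update rate at $x\in S$ is $\m_{K_x}^{\cdot}(\s_x=\cdot)$ with $K_x=F_{i+1}\cup\{x\}$; this rate depends on the boundary configuration on all of $\partial_V K_x$, which contains essentially every vertex of $L_i\setminus\{x\}$ (each such vertex has descendants in $F_{i+1}$ by the growing property). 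Consequently $\m_{K_x}^{\n}(\s_x=+)\ne\m_{K_x}^{\n^{z}}(\s_x=+)$ in general even when $d(x,z)>1$: the flip at $z$ influences $x$ through paths in $F_{i+1}$. This is exactly why Proposition \ref{propdecadimento} is proved for all $\ell=d(x,y)$ and not just $\ell=1$, and why your own remark that ``strictly speaking one applies the $\ell=1$ estimate'' signals the problem — the $\ell>1$ cases are needed here.

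The correct computation, as in the paper, is
\be
\frac{d}{dt}\,\mathbb E_{\n,\xi}[D(\n(t),\xi(t))]\Big|_{t=0}
\;=\;-1+\sum_{x\in S,\, x\ne z}P_{dis}^{x}(\n,\n^{z})
\;\le\;-1+c\sum_{\ell\ge 1}\Delta^{\ell}e^{-\b'\ell}\,,
\ee
where one groups the sites $x$ by their distance $\ell$ from $z$ (at most $\Delta^{\ell}$ of them) and uses the full exponential decay $P_{dis}^{x}(\n,\n^{z})\le ce^{-\b' d(x,z)}$ of Proposition \ref{propdecadimento}. The series converges and is $O(e^{-\b'})$ only because $\b'=2g\b-\d$ exceeds $\log\Delta$ for $\b\gg1$; this competition between volume growth $\Delta^{\ell}$ and decay $e^{-\b'\ell}$ is the substantive point of the claim, and it is absent from your argument. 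Your identification of $P_{dis}^{w}$ with the quantity controlled by the proposition (via (\ref{rate2}) and $U=F_{i+1}\cup S$) is the right matching; the missing ingredient is that it must be applied at every distance, not only at distance one.
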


\begin{proof}[Proof of Claim \ref{claim:coupling}]
The derivative in $t$ of the average distance, computed for $t=0$,
can be written as
\bea\label{distanzamedia}
&&\frac{d}{dt}\,\mathbb
E_{\n,\xi}[D(\n(t),\xi(t))]\left|_{t=0}\right.=
\frac{d}{dt}\,\left(\widetilde P_t D
\right)(\n,\xi)\left|_{t=0}\right.\,=\, (\widetilde \cL \,D) (\n,\xi)
\nonumber \\
&&\quad\quad\quad\quad= \sum_{x\in
S}\,\,\sum_{a,b\in\{\pm 1\}} \tilde
c_x((\n,a)(\xi,b))[D(\n^{x,a},\xi^{x,b})-D(\n,\xi)]\,.
\eea

Since $(\n,\xi)\in H$, there exists a vertex $y\in S$ such that
$\xi=\n^y$.  If $x=y$, then  $P_{dis}^{^x}(\n,\n^x)=0$ and the
distance between the updated configurations decreases of one. While
if $x\ne y$, with probability $P_{dis}^{^x}(\n,\n^y)$  the updated
configurations have different spin at $x$ and  their distance
increases by one.
Continuing from (\ref{distanzamedia}) we get, for all $\b\gg 1$,
\bea\label{averagedistance}
\frac{d}{dt}\,\mathbb E_{\n,\xi}[D(\n(t),\xi(t))]\left|_{t=0}\right.
&=&
-1+ \sum_{x\in S \atop x\ne y}{P_{dis}^{^x}(\n,\n^y)} \nonumber \\
&\leq&
-1+ c \sum_{\ell\geq1} e^{-\b'\ell}\Delta^\ell\nonumber\\
&\leq&
 -(1-c e^{-\b'})\,,
\eea
where in the second line we used the bound
\be\label{pdisagreestima}
P_{dis}^{^x}(\n,\n^y)=|\m_{K_x}^\n(\s_x=+)-\m_{K_x}^{\n^y}(\s_x=+)|
\leq c e^{-\b' d(x,y)}\,,
\ee
 which holds for all $\b'=\d\b-2g>0$ as stated in Proposition
 \ref{propdecadimento}.
 Claim \ref{claim:coupling} follows
taking $\a=(1-c e^{-\b'})$ and
$\b$ sufficiently large.
\end{proof}

Using the \emph{path coupling} technique (see \cite{BD})  we can
extend the result of Claim \ref{claim:coupling}  to arbitrary
initial configurations $(\n,\xi)\in\Omega_S\times\Omega_S$, and obtain
\be\label{pathcoupling}
\frac{d}{dt}\,\mathbb
E_{\n,\xi}[D(\n(t),\xi(t))]\left|_{t=0}\right.\leq -\,\a
D(\n,\xi)\,.
\ee
From (\ref{pathcoupling}) it  now follows
straightforwardly  that $\E_{\n,\xi}[D(\n(t),\xi(t))]\leq e^{-\a\,t}
D(\n,\xi)$, and then we get
\be\label{disagreement}
\P(\n(t)\ne
\xi(t))\leq \mathbb E_{\n,\xi}(D(\n(t),\xi(t)))\leq e^{-\a\,t}
D(\n,\xi)\,.
\ee
To bound the spectral gap $\cg (\nu_S^\t)$ of the
dynamics on $S$, we consider an eigenfunction $f$ of $\mathcal
L_S$ with eigenvalue $-\cg(\nu_S^\t)$, so that
$$
\mathbb E_\s f(\n(t))= e^{t \mathcal L_S } f(\n)\,=\,
 e^{-\cg(\nu_S^\t)\, t}f(\n)\,.
$$
Since the identity function has eigenvalue zero, and  therefore it is
orthogonal to $f$, it follows that $\nu_S^\t(f)=0$  and
$\nu_S^\t(\mathbb E_\xi f(\xi(t)))=0$, where $\nu_S^\t$ is the
invariant measure for $\mathcal L_S$. From these considerations and
inequality (\ref{disagreement}), we have
\bea
e^{t \mathcal L_S }f(\s)
&=&
\mathbb E_\n f(\n(t))- \nu_S^\t(\mathbb E_\xi f(\xi(t)))\nonumber\\
&=&\sum_{\xi} \nu_S^\t(\xi)[\,E_\n f(\n(t))- E_\xi
f(\xi(t))\,]\nonumber\\
&\leq&
2\|f\|_{\infty} \sup_{\n,\xi}\P(\n(t) \ne\xi(t))\nonumber\\
&\leq&
2\|f\|_{\infty}|S| e^{-\a\,t}\,.
\eea
From the last
computation, which holds for all $\n\in\Omega_S^\t$ and for all $t$,
we finally obtain that $\cg (\nu_S^\t)\geq \a$ independently of the
size of $S$, which implies the Poincar\'{e} inequality (\ref{poin})
with constant $c_0=\a^{-1}= 1+O(e^{-c\b})$. This concludes the proof
of Theorem \ref{teoPoin}. {\hfill$\square$\vskip.5cm}

\subsection{Step 2: Poincar\'{e} inequality for the global Gibbs measure}
With the previous analysis we obtained a like-Poincar\'{e} inequality for
the marginal of the measure $\m_i$ on  the level $L_i$ (see Remark \ref{remark}), which
inserted in  formula (\ref{ric2}) provides the bound%
 \be\label{pvar}
\v(f)\,\leq\, c_0\sum_{i=0}^m\sum_{x\in L_i}\m\left[\m_i(\v_{K_x}
(g_{i+1}))\right]\,.
\ee
 Using the same notation as in \cite{MSW},
let us denote the sum in the r.h.s. of (\ref{pvar}) by $\pv(f)$. The
aim of the following analysis is to study $\pv(f)$ in order to
find an inequality of the kind $\pv(f)\leq c\mathcal{D}(f)+ \varepsilon\pv(f)
$, with $\varepsilon=\varepsilon(\b, g, \Delta)<1$ independent of the size of
the system. This would imply that
$$
\v (f)\leq c_0\cdot \pv (f)\leq
c_0 \frac{c}{1-\varepsilon}\mathcal{D}(f)\,,
$$
and then would conclude the proof of Theorem \ref{teo}.

 In this last part of the section, we will first relate the local variance of
 $g_i=\m_i(f)$ with the local variance of $f$. This will produce a
 covariance term that will be analyzed using  a recursive
 argument.

\subsubsection{Reduction to covariance} In order to reconstruct
the Dirichlet form of $f$ from (\ref{pvar}),
 we want to extract the local variance of $f$ from the local
variance of $g_{i+1}$. Notice that w.r.t.
the measure $\m_{K_x}$, the function $g_{i+1}$ just depends on $x$.
Fixing $x\in L_i$ and $\t\in\Omega^+$, and defining
$p(\t):=\m_{K_x}^\t (\s_x=+)$ and $q(\t):=\m_{K_x}^\t (\s_x=-)$, we can write%
\be\label{relatingvariance}
\m_{i}\left(\v_{K_x}
(g_{i+1})\right)\,=\,\sum_{\t} \m_{i}(\t)p(\t)q(\t)\,(\nabla_x\,
g_{i+1}(\t))^2\,.
\ee
Using the martingale property $g_{i+1}= \m_{i+1}(g_{i+2})$, the
variance $\v_{K_x} (g_{i+1})$ can be split in two terms, stressing
the dependence on $x$ of  $g_{i+2}$ and of the conditioned measure
$\m_{i+1}$. Let us formalize this idea.

 For a given configuration $\t \in \Omega^+$ we introduce the
symbols
$$
\t^+\,:=\,\left\{\begin{array}{c} \t^+_y=\t_y\quad \mbox{ if } y\ne x\\
\t^+ _y=+\quad\mbox{ if } y=x \end{array}\right.\qquad
\t^-\,:=\,\left\{\begin{array}{c} \t^-_y=\t_y\quad \mbox{ if } y\ne x\\
\t^- _y=-\quad\mbox{ if } y=x \end{array}\right.\,
$$
and  define the density
\be\label{def:density}
h_x(\s):=\frac{\m_{i+1}^{\t^+}(\s)}{\m_{i+1}^{\t^-}(\s)}\,,\quad\mbox{
with }\quad \m_{i+1}^{\t^-}(h_x)=1\,.
\ee
Whit this notation and continuing from (\ref{relatingvariance}),
we get
\bea
 \m_{i}(\v_{K_x} (g_{i+1}))
 &=&
\sum_{\t}\m_{i}(\t)p(\t)q(\t)\left[\nabla_x\,\m_{i+1}(g_{i+2})(\t)
\right]^2\hspace{5cm}\nonumber \\
&=&
\sum_{\t}\m_{i}(\t)p(\t)q(\t)
\left[\m_{i+1}^{\t^-}(g_{i+2})-\m_{i+1}^{\t^+}(g_{i+2})\right]^2\nonumber\\
& = &
\sum_{\t}\m_{i}(\t)p(\t)q(\t)\left[ \m_{i+1}^{\t^+}(\nabla_x
g_{i+2})-\m_{i+1}^{\t^-}(h_x, g_{i+2})\right]^2\,\nonumber \\
\label{1termine}
&\leq&
2\sum_{\t}\m_{i}(\t)p(\t)q(\t)\left[
\left(\m_{i+1}^{\t^+}(\nabla_x
g_{i+2})\right)^2+\left(\m_{i+1}^{\t^-}(h_x,
g_{i+2})\right)^2\right]
\eea

Consider the first term of (\ref{1termine}) and notice that
$ \m_{i+1}^{\t^+}(\nabla_x g_{i+2}) =
\m_{i+1}^{\t^+}(\nabla_x f)$.
To understand this fact, it is enough
to observe that the dependence on $x$ of $g_{i+2}=\m_{i+2}(f)$ comes
only from $f$, since the b.c. on $B_{i+1}$ is fixed equal to
$\t^+$. Replacing $\nabla_x g_{i+2}$ by $\nabla_x f$ and applying  the
Jensen inequality, we get
\be\label{primo}
\sum_{\t}\m_{i}(\t)
p(\t)q(\t)\left(\m_{i+1}^{\t^+}(\nabla_x g_{i+2})\right)^2\leq
\sum_{\t}\m_{i}(\t)p(\t)q(\t)\left(\m_{i+1}^{\t^+}
\left(\nabla_x f\right)^2\right)\,.
\ee
Now, notice that
$p(\t)\m_{i+1}^{\t^+}(\s^+)=\m_{K_x}^\t(\s^+)=
\m_x^\s(\s_x=+)\left(\m_{K_x}^\t(\s^+)+\m_{K_x}^\t(\s^-)\right)$.
This, together with the fact that $\nabla_x f$ does not depend on $x$,
means that the expression on the r.h.s. of (\ref{primo}) equals to
\be\label{primo1}
\sum_{\t}\m_{i}(\t)q(\t)
\sum_\s \m_{K_x}^\t(\s)\m_{x}^\s(\s_x=+)\left(\nabla_x f(\s)\right)^2
\leq
\sum_{\t}\m_{i}(\t)\inf_{\s\in\Omega_{K_x}^\t}
\left\{\frac{q(\t)}{\m_x^\s(\s_x=-)}\right\}\,.
\ee
Since $\s$ agrees with $\t$ on $K_x^c$, then
\be\label{ratio}
\frac{q(\t)}{\m_x^\s(\s_x=-)}=\frac{\m_{K_x}^\t(\s_x=-)}
{\m_{K_x}^\t(\s_x=-|\s)}
= \frac{\m_{K_x}^\t(\s)}{\m_{K_x}^\t(\s|\s_x=-)}\leq h_x(\s)\,,
\ee
and hence,
$\inf_\s \left\{\sfrac{q(\t)}{\m_x^\s(\s_x=-)}\right\}\leq \|h_x\|_\infty$.

To bound $\|h_x\|_\infty$, we first write
\be
h_x(\s) =\frac{\m_{k_x}^{\t}(\s|\s_x=+)}{\m_{k_x}^{\t}(\s|\s_x=-)}
=\frac{\m_{k_x}^{\t}(\s_x=+|\s)}{\m_{k_x}^{\t}(\s_x=-|\s)}\cdot
\frac{\m_{k_x}^{\t}(\s_x=-)}{\m_{k_x}^{\t}(\s_x=+)}\,.
\ee
Taking the supremum  over $\s$ of $h_x$,
we then have
\be\label{normah_x}
\|h_x\|_\infty \leq \frac{1}{\m_{i+1}^{\t^-}
(\s_y=+; \forall y\in  N_x\cap L_{i+1})}
\leq \frac{1}{1-\sum_{y\in N_x\cap L_{i+1}}\m_{i+1}^{-}(\s_y=-)}\,,
\ee
where we denoted by $\m_{i+1}^{-}$ the measure conditioned on
having all minus spins in $B_{i}$ and plus spins in $\partial_V B$.
Inequality (\ref{caso1}), applied to $U= F_{i+1}$, implies that
\be\label{spin-}
\m_{i+1}^{-}(\s_y=-)\leq c e^{-\b'} \,,
\ee
 with
$\b'=2g\b-\d$ as in Proposition \ref{propdecadimento}.
 Combining (\ref{normah_x}) and (\ref{spin-}),
 we get that for all $\b\geq \frac{\d}{2g}$,
 \be\label{normasuphx}
  \|h_x\|_\infty\leq 1+ ce^{-\b'}\,.
 \ee

Altogether, inequalities (\ref{primo})-(\ref{normasuphx})  imply that
\be\label{primo2}
\sum_{\t}\m_{i}(\t) p(\t)q(\t)\left(\m_{i+1}^{\t^+}
(\nabla_x g_{i+2})\right)^2\leq c_1
\m_{i}\left(\v_x (f)\right)\,,
\ee
with $c_1=c_1(\b, \D, g)= 1+O(e^{-c\b})$.
Thus, summing both sides of (\ref{1termine})
over $x\in L_i$ and $i\in\{0,\ldots,m\}$,
and applying  inequality (\ref{primo2}),   we obtain
\be\label{pvareresto}
\! \pv (f)\,\leq\, 2c_1\,\mathcal{D}(f)+
2\,\sum_{i=0}^{m}\sum_{x\in L_i}\m\!\left[
\sum_{\t}\m_{i}(\t)p(\t)q(\t) \left(\m_{i+1}^{\t^-}(h_x,
g_{i+2})\right)^2\!\right]\,.
\ee

Notice that since $g_{m+2}\equiv f$ is constant w.r.t. $\m_{m+1}$,
then $\m_{m+1}^{\t^-}(h_x, g_{m+2})\equiv 0$ and
the value $m$ can be removed from the summation over $i$
in the r.h.s. of (\ref{pvareresto}).\\
It now remains to  analyze  the covariance
$\m_{i+1}^{\t^-}(h_x, g_{i+2})$.

\subsubsection{Recursive argument}
Before going on with the proof, we
need some more definitions and notation. For every $x\in L_i$,
let $D_x$ denote the set of nearest neighbors
of $x$ in the level $L_{i+1}$ (descendants of $x$).
Given $x\in L_i$ and
$\ell\in\mathbb{N}$, let us define the following objects:
\begin{enumerate}
\item[(i)] $D_{x,\ell}:=\{y\in L_{i+1}\,:\,d(y,
D_x)\leq \ell \}\,$ is  the $\ell$-neighborhood of $D_x$ in
$L_{i+1}$;
\item[(ii)] $\mathcal{F}_{x,\ell}\,:=\,
 \s \left(\s_y\,:\, y\in
 B_{i+1}\setminus D_{x,\ell} \right)\,$
 is the $\s$-algebra generated by the spins on $B_{i+1}\setminus
 D_{x,\ell}$;
\item[(iii)]  $\m_{x,\ell}(\,\cdot\,):=\m\,(\cdot\,|\mathcal{F}_{x,\ell})\,$
is the Gibbs measure conditioned on the $\s$-algebra $\cF_{x,\ell}$.
\end{enumerate}
We remark that $D_{x,0}= D_x$, and that there exists some
$\ell_0\leq 2(i+1)$ such that, for all integers $ \ell\geq
\ell_0$, $D_{x,\ell}= L_{i+1}$ and  $\m_{x,\ell}=\m_{i+1}$.\\
We also remark that for any function $f\in
L^1(\Omega,\mathcal{F}_{i+1},\m)$, the set of variables
$\{\m_{x,\ell}(f)\}_{\ell\in \N}$ is a Martingale with respect to
the filtration $\{\mathcal{F}_{x,\ell}\}_{\ell=0,1,\ldots,\ell_{0}}$.
\vspace{6mm}

 Let us now come back to our proof and  recall the following
property of the covariance. For all subsets $D\subseteq C \subseteq B$,
 \be\label{pro2}
 \m_C^{\n}(f,g)\,=\,\m_C^{\n}(\m_D(f,g))+\m_C ^{\n}(\m_D (f),\m_D(g))\,.
\ee
Since the support of $\m_{i+1}$ strictly contains the support of
$\m_{x,0}$, we can apply the property (\ref{pro2}) to the square
covariance $(\m_{i+1}^{\t^-}(h_x, g_{i+2}))^2$ appearing in
(\ref{pvareresto}), in order to get
\be\label{decompcovar}
(\m_{i+1}^{\t^-}(h_x,
g_{i+2}))^2\,\leq\,2(\m_{i+1}^{\t^-}(\m_{x,0}(h_x,g_{i+2})))^2 +2
(\m_{i+1}^{\t^-}(\m_{x,0}(h_x), \m_{x,0}(g_{i+2})))^2 \,.
\ee
 The first term in the r.h.s. of (\ref{decompcovar}) can be bounded,
 by the Schwartz inequality, as
 \be\label{1terminesecondo}
(\m_{i+1}^{\t^-}(\m_{x,0}(h_x, g_{i+2})))^2 \leq  \m_{i+1}^{\t^-}
(\v_{x,0}(h_x))\cdot \m_{i+1}^{\t^-} (\v_{x,0}(g_{i+2}))\,.
\ee
 The second term can be rearranged and bounded as follows:
 \bea\label{2bisterminesecondo}
[\m_{i+1}^{\t^-}(\m_{x,0}(h_x),\m_{x,0}( g_{i+2}))]^2
&=&
\left[\m_{i+1}^{\t^-}\left(\,\m_{x,0}(h_x)-\m_{i+1}(h_x),
g_{i+2}\,\right)\right]^2\hspace{1cm}\nonumber\\
&=&
\left[\m_{i+1} ^{\t^-}\left(\sum_{\ell=1}^{\ell_0}\left(\m_{x,\ell-1}(h_x)
-\m_{x,\ell}( h_x)\right) ,g_{i+2}\right)\right]^2 \nonumber\\
&\leq&
\sum_{\ell=1}^{\ell_0} \ell^2\,\left[\m_{i+1}
^{\t^-}(\m_{x,\ell-1}(h_x)
-\m_{x,\ell}(h_x) ,g_{i+2})\right]^2 \nonumber\\
&=&
\sum_{\ell=1}^{\ell_0}\ell^2\, \left[\m_{i+1}
^{\t^-}\left(\m_{x,\ell}(\m_{x,\ell-1}(h_x)
,g_{i+2})\right)\right]^2 \,,\hspace{1.3cm}
\eea
where in the second
line, due to the fact that $\m_{x,\ell_0}=\m_{i+1}$ for some
$\ell_0$, we substituted $\m_{x,0}(h_x)-\m_{i+1}(h_x)$ by the
telescopic sum $\sum_{\ell=1}^{\ell_0}(\m_{x,\ell-1}(h_x)
-\m_{x,\ell}(h_x))\,$.
Applying again the Cauchy-Schwartz inequality
to the last term in (\ref{2bisterminesecondo}), we  get
$$
[\m_{i+1}^{\t^-}(\m_{x,0}(h_x),\m_{x,0}( g_{i+2}))]^2\leq\hspace{7cm}
$$\vspace{-7mm}
\be\label{2terminesecondo}
\hspace{1.9cm}\leq\,
\sum_{\ell=1}^{\ell_0}\ell^2\, \m_{i+1}
^{\t^-}\left(\v_{x,\ell}(\m_{x,\ell-1}(h_x))\right)\cdot
\m_{i+1}^{\t^-}\left(\v_{x,\ell}(g_{i+2})\right)
\ee
 To conclude
the estimate on the covariance, it remains to analyze the  three
quantities appearing in (\ref{1terminesecondo}) and
(\ref{2terminesecondo}):
\begin{enumerate}
\item[(i)] $\m_{i+1}^{\t^-}\left(\v_{x,\ell}(g_{i+2})\right)$,
for all $\ell=0,1,\ldots,\ell_0$;
\item[(ii)] $\m_{i+1}^{\t^-} (\v_{x,0}(h_x))$;
\item[(iii)] $\m_{i+1}
^{\t^-}\left(\v_{x,\ell}(\m_{x,\ell-1}(h_x))\right)$, for all
$\ell=1,\ldots,\ell_0$\,,
\end{enumerate}
We proceed estimating separately these three terms.\vspace{5mm}

 \no {\bf First term: Poincar\'{e} inequality for the
marginal measure on $D_{x,\ell}$}.\\
\no Let us consider the variance $\v_{x,\ell}(g_{i+2})$ appearing in (i).
By definition, the function $g_{i+2}$  depends on the spin
configuration on $B_{i+1}$. Since the measure $\m_{x,\ell}^\n$
fixes the configuration  on $B_{i+1}\setminus D_{x,\ell}$, it follows that
$$
\m_{x,\ell}^\n(g_{i+2})=\m_{{x,\ell}|_{D_{x,\ell}}}^\n(g_{i+2})\,.
$$
Thus, for every configuration $\n\in \Omega^{+}$, we can apply the
Poincar\'e inequality stated in Theorem (\ref{teoPoin}) to
$\v_{x,\ell}^\n(g_{i+2})$, and  obtain the inequality
\be\label{primadisug}
\m_{i+1}^{\t^-}\left(\v_{x,\ell}(g_{i+2})\right)\,\leq\,c_0
\sum_{y\in D_{x,\ell}}\m_{i+1}^{\t^-}(\v_{K_y}(g_{i+2})),
\ee
with $c_0=1+O(e^{-c\b})$ independent of the size of system.\vspace{5mm}

\no {\bf Second term: computation of the variance of $h_x$}.\\
\no Notice that, from definition (\ref{def:density}),
it turns out that $h_x$  only depends on the spin configuration on $D_x$.
 In particular, for all $\n$ which agrees with $\t^-$ on $B_{i}$,
 $\m_{x,0}^\n(h_x)=0$ and
\be\label{varianzadensita}
\v_{x,0}^\n(h_x)\leq
\|h_x\|^2_\infty - 1\,.
 \ee
Together with inequality (\ref{normasuphx}), this yields
the bound:
 \be\label{varianza2h_x}
 \v_{x,0}^\n(h_x)\leq ce^{-\b'}=:k_{\b} \,.
 \ee
 \vspace{2mm}

\no {\bf Third term: the variance of $\m_{x,\ell-1}(h_x)$}.\\
\no We now consider the variance
$\v^\n_{x,\ell}(\m_{x,\ell-1}(h_x))$, with $\n\in \Omega^+$ and
$\ell\geq 1$. Applying the result of Theorem \ref{teoPoin}, we obtain
 \bea\label{ultima}
\v_{x,\ell}^\n(\m_{x,\ell-1}(h_x))\,
&\leq&
\, c_0\sum_{z\in D_{x,\ell}}
\m_{x,\ell}^\n( \v_{K_z} (\m_{x,\ell-1}(h_x)))\nonumber\\
&=&
\, c_0\sum_{z\in D_{x,\ell}\setminus D_{x,\ell-1}} \m_{x,\ell}^\n(
\v_{K_z} (\m_{x,\ell-1}(h_x)))\,,
\eea
where in the last line we used
that the function $\m_{x,\ell-1}(h_x)$ does not depend
on the spin configuration on $D_{x,\ell-1}$.

 Let $z\in
D_{x,\ell}\setminus D_{x,\ell-1}$, and for any configuration
$\zeta\in\Omega_{D_{x,\ell}}^\n$, let us denote by $\zeta^+$ and
$\zeta^-$ the configurations that agree with $\zeta$ in all sites
but $z$, and have respectively a $(+)$-spin and a $(-)$-spin on $z$.
The summand in (\ref{ultima}) can be trivially bounded as
\be
\m_{x,\ell}^\n( \v_{K_z} (\m_{x,\ell-1}(h_x)))\,\leq\,
\frac{1}{2}\sup_{\,\,\,\,\zeta\in\,\Omega_{x,\!\ell}^\n}
(\m_{x,\ell-1}^{\zeta^+}(h_x)-\m_{x,\ell-1}^{\zeta^-}(h_x))^2\,.
 \ee
Moreover, by stochastic domination, the inequality
$\m_{x,\ell-1}^{\zeta^+}(h_x)\geq\m_{x,\ell-1}^{\zeta^-}(h_x)$ holds.
Let $\nu(\s,\s')$ denote a monotone coupling  with marginal
measures $\m_{x,\ell-1}^{\zeta^+}$ and $\m_{x,\ell-1}^{\zeta^-}$. We
then have
\begin{eqnarray}\label{correl1}
\m_{x,\ell-1}^{\zeta^+}
(h_x)-\m_{x,\ell-1}^{\zeta^-}(h_x)&=&\sum_{\s,\s'}
\nu(\s,\s')\left(h_x(\s)- h_x(\s')\right)\nonumber\\
&\leq &  \|h_x\|_\infty \,\nu(\s_y\ne\s'_y\,,
y\in D_{x} )\nonumber\\
&\leq& \,\D\|h_x\|_{\infty} \,\max_{y\in D_x}\left(
\nu(\s_{y}=+)-\nu(\s'_{y}
=+)\right)\nonumber\\
&=& \,\D\|h_x\|_{\infty}\,\max_{y\in D_x}(\m_{x,\ell-1}^{\zeta^+}
(\s_{y}=+)-\m_{x,\ell-1}^{\zeta^-}(\s_{y}=+)),\quad\quad\quad
\end{eqnarray}%
where we used that  the function $h_x$ only depends on the
spins on $D_{x}$.

From Proposition \ref{propdecadimento}, with  $U= F_{i+2}\cup D_{x,\ell}$,
and since $d(z,y)\geq d(z,D_x)\geq\ell$,
the probability of disagreement appearing in (\ref{correl1})
can be bounded as
 \bea\label{iterato}
\m_{x,\ell-1}^{\zeta^+}(\s_{y}=+)-\m_{x,\ell-1}^{\zeta^-}(\s_{y}=+)
\leq
c\,e^{-\b'\ell}\,. \eea

Putting together formulas
(\ref{ultima})-(\ref{iterato}), and applying inequality
(\ref{normasuphx}),
  we  obtain that for all $\n\in\Omega_+$,
\be\label{3termine}
\v^\n_{x,\ell}(\m_{x,\ell-1}(h_x)) \,\leq\, k'_{\b}e^{-2\b'\,\ell}\,
\ee
with $k'_{\b}=c(1+O(e^{-c\b})) \,.$ \vspace{6mm}

\no {\bf Conclusion}.\\
Let us go back to inequalities (\ref{1terminesecondo}) and
(\ref{2terminesecondo}). Applying bounds
(\ref{primadisug}),(\ref{varianza2h_x}) and (\ref{3termine}), we get
respectively
\begin{itemize}
\item $\displaystyle(\m_{i+1}^{\t^-}(\m_{x,0}(h_x, g_{i+2})))^2 \leq k_\b
\sum_{y\in D_x} \m_{i+1}^{\t^-}(\v_{K_y} (g_{i+2}))$,\vspace{-4pt}
\item $\displaystyle[\m_{i+1}^{\t^-}(\m_{x,0}(h_x),\m_{x,0}( g_{i+2}))]^2
\leq k'_{\b}\sum_{\ell=1}^{\ell_0}\ell^2 e^{-2\b'\,\ell} \sum_{y\in
D_{x,\ell}} \m_{i+1}^{\t^-}(\v_{K_y} (g_{i+2}))$,
\end{itemize}
where we included in $k_{\b}$ and $k'_{\b}$ all constants non
depending on $\b$.

 For all $\b\gg 1$, there exists a constant $\varepsilon\equiv\varepsilon(\b,g, \Delta)=
 O(e^{-c\b})$ such that
$k_\b\leq\varepsilon$ and $ k'_\b \ell^2 e^{-\b' \ell}\leq k'_\b
e^{-\b'} \leq\varepsilon$. Substituting $\varepsilon$ in the
inequalities above and  summing the two terms  as in
(\ref{decompcovar}), we obtain
$$
\left(\m_{i+1}^{\t^-}(h_x, g_{i+2})\right)^2\leq \varepsilon
\sum_{\ell=0}^{\ell_0} e^{-\b'\ell} \sum_{y\in D_{x,\ell}}
\m_{i+1}^{\t^-}(\v_{K_y} (g_{i+2}))\,.$$ Inserting this result in the
second term of formula (\ref{pvareresto}) and  rearranging the
summation, we get
$$\sum_{i=0}^{m-1}\sum_{x \in
L_i}\m\left[\sum_{\t}\m_i(\t)p(\t)q(\t)\left(\m_{i+1}^{\t^-}(h_x,
g_{i+2})\right)^2\right]\hspace{4cm}$$\vspace{-5mm}
\bea\label{meta} &\leq& \varepsilon \sum_{i=0}^{m-1}\sum_{x \in
L_i}\sum_{\ell=0}^{\ell_0}\sum_{y\in D_{x,\ell}}e^{-\b'\ell} \m(\v_{K_y}
(g_{i+2}))\nonumber\\
&\leq& \varepsilon \sum_{i=0}^{m-1}\sum_{y\in L_{i+1}} \m(\v_{K_y}
(g_{i+2}))\sum_{\ell=0}^{\ell_0} e^{-\b'\ell}n(\ell) \,,
 \eea
where in the last line we denoted by $n(\ell)$ the factor which
bounds the number of vertices $x$ such that a fixed vertex $y$
belongs to $ D_{x,\ell}$. Since $n(\ell)$ grows at most like $\Delta^\ell$, the
product $e^{-\b' \ell} n(\ell)$ decays exponentially with $\ell$ for
all $\b\gg 1$. Thus the sum over $\ell\in \{0,\ldots,\ell_0\}$ can
be bounded by a finite constant $c$ which will be included  in the
factor $\varepsilon$ in front of the summations. Continuing from
(\ref{meta}),  we get \bea\sum_{i=0}^{m-1}\sum_{x \in
L_i}\m\left[\sum_{\t}\m_i(\t)p(\t)q(\t)\left(\m_{i+1}^{\t^-}(h_x,
g_{i+2})\right)^2\right]&\leq& \varepsilon
\sum_{i=1}^{m}\sum_{y\in L_{i}}\m(\v_{K_y} (g_{i+1}))\nonumber\\
& \leq& \varepsilon\, \pv(f)\,.\eea
 Inserting this result in (\ref{pvareresto}) and noticing that
$\varepsilon=O(e^{-c\b})<1$ for $\b$ large enough, we obtain
$$\pv(f)\leq 2c_1\cD(f) + \varepsilon \pv (f)\Longrightarrow
\pv(f)\leq \frac{2c_1}{1-\varepsilon}\,\cD(f)\,.$$
Together with inequality
(\ref{pvar}), this implies that
$$\v(f)\leq c_0\pv(f)\leq c_2\cD(f) \,,$$ that is the
desired Poincar\'{e} inequality with
$c_2=c_2(\b,g,\Delta) =2(1+O(e^{-c\b}))$
independent of the size of the system .
Notice also that the lower bound on the spectral gap, $1/c_2$,
increases with $\b$ and converges to $1/2$ when $\b\uparrow\infty$.
This concludes the proof of Theorem \ref{teo}.
{\hfill$\square$\vskip.5cm}

\section{Influence of boundary conditions on the mixing time}
In this section we discuss two examples of  influence
of the boundary condition on the mixing time
derived from Theorem \ref{teo}.
In particular, we first prove Lemma \ref{lemma:hyper},
which implies the applicability of Theorem \ref{teo}
to hyperbolic graphs with sufficiently high degree.
Then we prove Theorem \ref{th:esempio} providing an explicit
example of growing graph which exhibits the behavior stated in the
theorem.

\subsection{Hyperbolic graphs}
In this subsection we want to prove Lemma \ref{lemma:hyper}.
Before starting the proof, let us recall
the following definition:
\begin{definition}\label{def:numberend}
 The number of ends, $\mathcal E (G)$, of a graph
 $G=(V,E)$, is defined as
$$\mathcal{E}(G):=\sup_{ K\subset V\atop K
{\scriptsize\mbox{finite}}}\{ \mbox{ number of infinite connected
components of } G\setminus K\,\}\,,$$ where $G\setminus K$ denotes
the graph obtained from  $G$ by removing the vertices which belong
to $K$ and the edges incident to these vertices.
\end{definition}
It is well known that hyperbolic graphs have \emph{one-end},
as well as all the lattices $\Z^d$ with $d\geq 2$.
This property, together with the planarity and the cycle-periodic
structure of the hyperbolic graphs, will be the main ingredient
of the next proof.

\begin{proof}[Proof of Lemma \ref{lemma:hyper}]
Consider a planar embedding of $\H(v,s)$  and assume,
without loss of generality, that the vertices of any
ball $B_r$  around a fixed vertex $o\in V$,
are in the infinite face of the graph.

With the same notation introduced in the previous sections,
let us consider  $x\in L_{r}= \partial_V B_{r-1}$.
Let $D_x= N_x \cap L_{r+1}$ be the set of "descendants of $x$",
and define
$$S_x:=N_x\cap L_{r}\quad \mbox{ and }\quad P_x:= N_x\cap L_{r-1}\,,$$
so that $N_x = D_x \cup S_x \cup P_x$.
Thus,  when specialized to hyperbolic graphs $\H(v,s)$,
the definition \ref{def:growing} of growing graph
corresponds to the condition:
\be\label{growing2}
\min_{x\in L_r,r\in\N}\{D_x -S_x- P_x\}=
\min_{x\in L_r,r\in\N}\{ v -2(S_x+P_x)\}=g\,.
\ee

Let us start verifying the  following general properties:
\begin{itemize}
\item[(i)] Every vertex $x\in L_r$, has at least one neighbor in $L_{r+1}$, i.e. $|D_x|\geq 1$;
\item[(ii)] Every vertex $x\in L_r$, has at most two neighbor in $L_{r}$, i.e. $|S_x|\leq 2$;
\item[(iii)] Every vertex $x\in L_r$, has at most two neighbors in $L_{r-1}$, i.e. $|P_x|\leq 2$.
\end{itemize}

Property (i) is proved by contradiction.
Assume the existence of a vertex $x\in L_r$ such that
$D_x=\emptyset$. By the periodic structure of the graph
it follows that there are at least $v$ vertices in $L_r$
with no descendants, spaced out by vertices in $L_r$ connected
to $\H(v,s)\setminus B_r$  by at least one edge.
Then, it exists a finite path, from $x$ to another vertex in $L_r$
with no descendants, dividing  two infinite components.
This is in contradiction we the property of having one end,  thus
we conclude that $|D_x|\geq 1$.

Property (ii) is  also proved contradiction.
Assume the existence of a vertex $x\in L_r$ linked
to three vertices in $L_r$. Then there exist two faces, included in $B_r$,
both passing through $x$ and one of its neighbors on $L_r$, say $y$
(see Fig. \ref{fig:intornox}, left frame).
Consequently, or $D_y\ne\emptyset$, contradicting the planarity of the graph
or the property of having one-end, or $D_y=\emptyset$,
contradicting property (i). We conclude that $|S_x|\leq 2$.

Property (iii) is clearly satisfied if $v=3$, as a consequence of property (i).
Let $v\geq 4$ and proceed by induction. For all vertices $x$ in the first level,
we have obviously that  $|P_x|=1$. Assume that property (iii) holds for
all vertices of $L_{r-1}$, and consider a vertex $x\in L_r$.
If we assume by contradiction that $P_x=3$,
then there exist two faces, included in $B_r$, both passing through $x$
and one of its parents, say $y$ (see Fig. \ref{fig:intornox}, right frame).
Consequently, the vertex $y$ can not have neighbors in $L_r$ other than $x$,
because this would contradict the planarity of the graph
or the property of having one-end.
On the other hand, if $s>3$,  $y$ can not be linked to the other parents of $x$
because  this would create a triangular face.
Globally, and by the inductive hypothesis that $|P_y|\leq 2$,
it  follows that $|N_y|\leq3$, which is a contradiction.
Then $P_x\leq 2$, which completes the induction step for $s>3$.
If $s=3$,  $y$ is connected with the two other parents
of $x$ to create two triangular faces.
Thus, by the inductive  hypothesis that $|P_y|\leq 2$,
it  follows that $|N_y|=5$, which is a contradiction since
$v\geq 7$  for all triangular tilings (recall the condition $(v-2)(s-2)>4$).
Then $|P_x|\leq2$, and by induction this concludes the proof of property (iii) also for $s=3$.
\begin{figure}[htb]
\begin{center}
\psfrag{a}{$o$} \psfrag{b}{$x$} \psfrag{c}{$y$}
\psfrag{e}{$L_{r}$} \psfrag{f}{$L_{r-1}$}
\psfrag{g}{$o$}
\psfrag{h}{$y$} \psfrag{i}{$x$} \psfrag{l}{$L_{r}$}
\includegraphics[width=5.5cm]{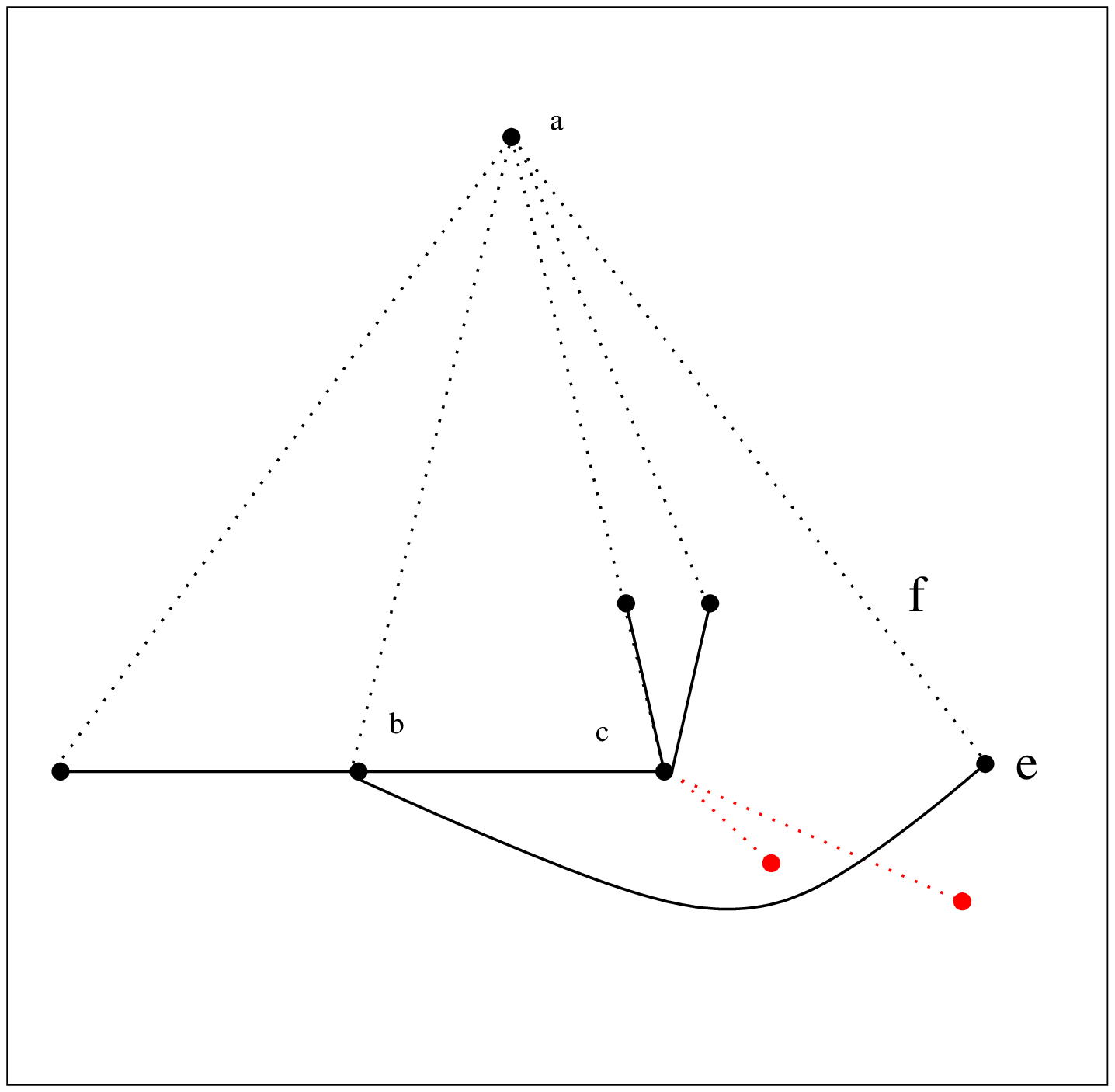}
\hspace{1cm}
\includegraphics[width=5.09cm]{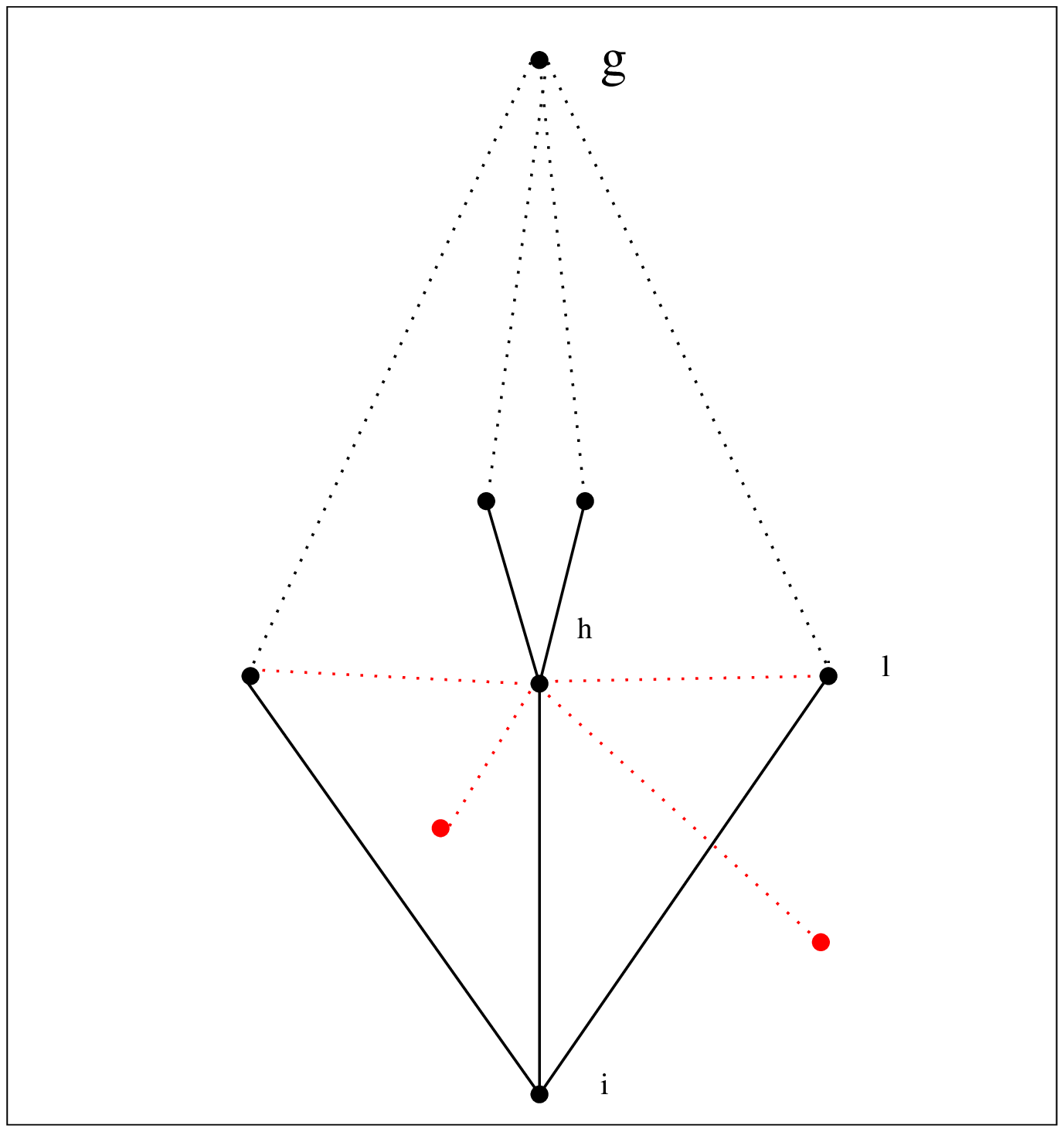}
\end{center}
\caption{{\small The presence of more than two neighbors of $x$
in the same level of $x$ (left frame)
or in the level above $x$ (right frame) is not
consistent with the structure of hyperbolic graphs.
The red lines correspond to  edges which are forbidden
by the geometric properties of the graph.}}\label{fig:intornox}
\end{figure}

Properties (ii) and (iii) imply that $S_x+P_x\leq4$.
From (\ref{growing2}), we then get that for all $v\geq 9$,
$\H(v,s)$ is a growing graph with parameter $g\geq v-8$.
Moreover, it can be easily verified that if $s=3$ (triangular tilings), then  $g= v-8$,
since in this case $S_x+P_x=4$ for some $x\in V$.

If $s>4$, the above condition can be improved by proving the following properties:
\begin{itemize}
\item[(iv)] If $s= 2k$, with $k\geq 2$, then for every $x\in L_r$, $S_x=0$;
\item[(v)] If $s= 2k+1$, with $k\geq 2$, then for every $x\in L_r$, $S_x+P_x\leq 2$.
\end{itemize}

Let us prove property (iv) by induction.
For all vertices $x$ in the first level, we have obviously that $|S_x|=0$,
otherwise we would have a triangular face. Assume that property (iv)
holds for all vertices of $L_{r-1}$, and consider a vertex $x\in L_r$.
If we assume that $x$ has a neighbor in $L_r$, say $y$,
then there exists a face $F$, included in $B_r$, passing through
$x$, $y$, and the respective parents (see Fig. \ref{fig:evenodd}, left frame).
The remaining sides of $F$ are all included in $B_{r-1}$,
and by the inductive hypothesis they can only connect vertices on
subsequent levels. In particular, this face can only have an odd number
of sides, which is in contradiction with the hypothesis that $s=2k$.
We conclude that $S_x=0$  for all $x\in V$.

Property (v) is clearly satisfied if $v=3$, as a consequence of property (i).
Thus let $v\geq 4$ and assume by contradiction that $|S_x|+|P_x|=3$, which means,
coherently to properties (ii) and (iii), that $S_x=2$ and $P_x=1$ or viceversa.
In both cases, there exist two faces, included in $B_r$,
passing through $x$ and one of its parents,
say $y$ (see Fig. \ref{fig:evenodd}, central and right frames).
Then the vertex $y$ can not have neighbors in $L_r$ other than $x$,
because this would create a triangular face, or would contradict
the planarity of the graph or the property of having one-end.
Analogously, $y$ can not have neighbors in $L_{r-1}$, because
this would create a triangular or a square face,
while $s=2k+1$ by hypothesis.
Then we get that $|N_y|=1+|P_y|\leq 3$, which is in contradiction with
the hypothesis that $v\geq 4$. We conclude that $|S_x|+|P_x|\leq 2$.

\begin{figure}[htb]
\begin{center}
\psfrag{a}{$x$} \psfrag{b}{$y$} \psfrag{c}{$L_r$}
\psfrag{d}{$L_{r-1}$} \psfrag{f}{$F$}
\psfrag{e}{$o$}
\includegraphics[width=4.5cm]{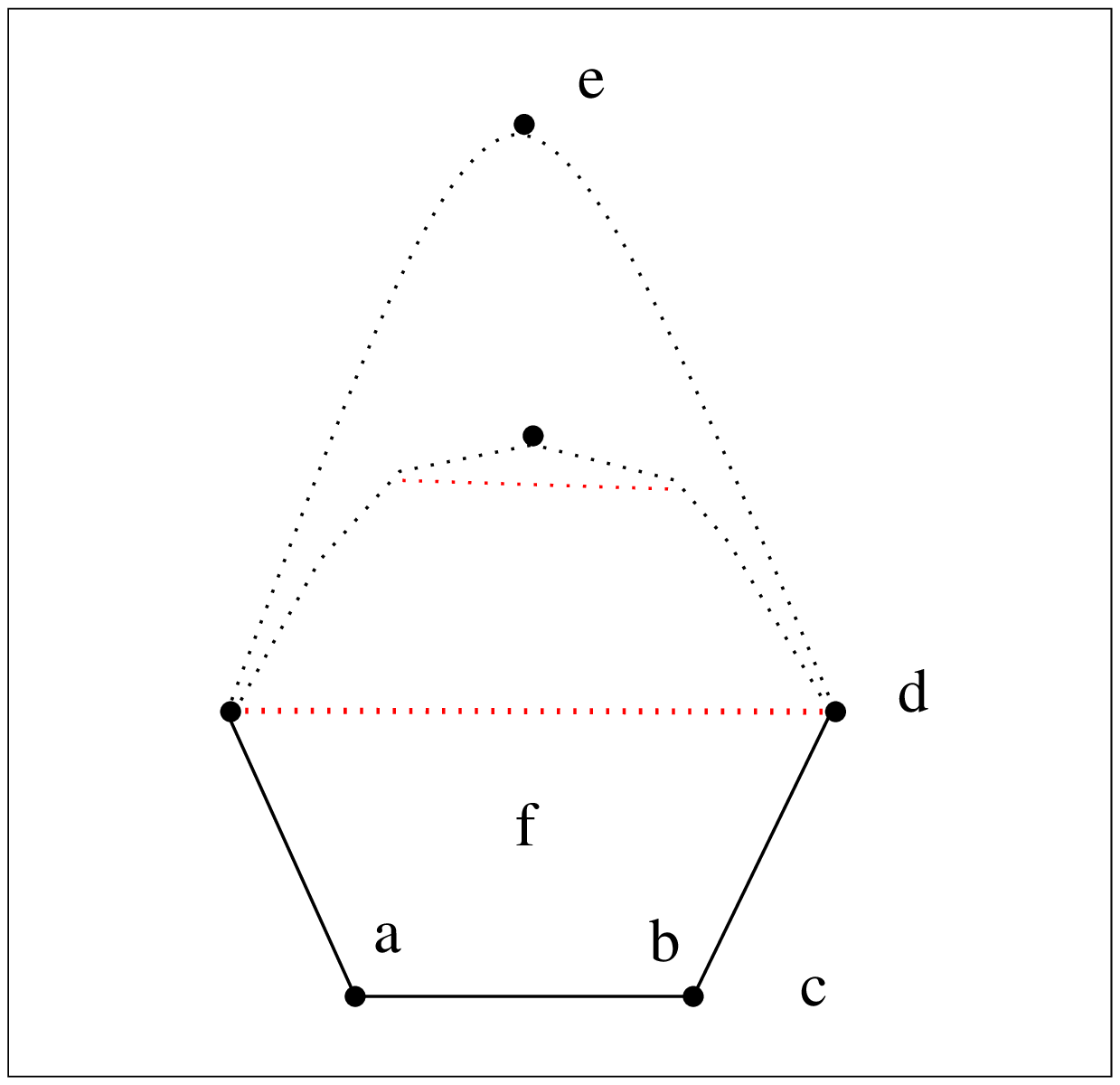}
\hspace{0.3cm}
\includegraphics[width=4.55cm]{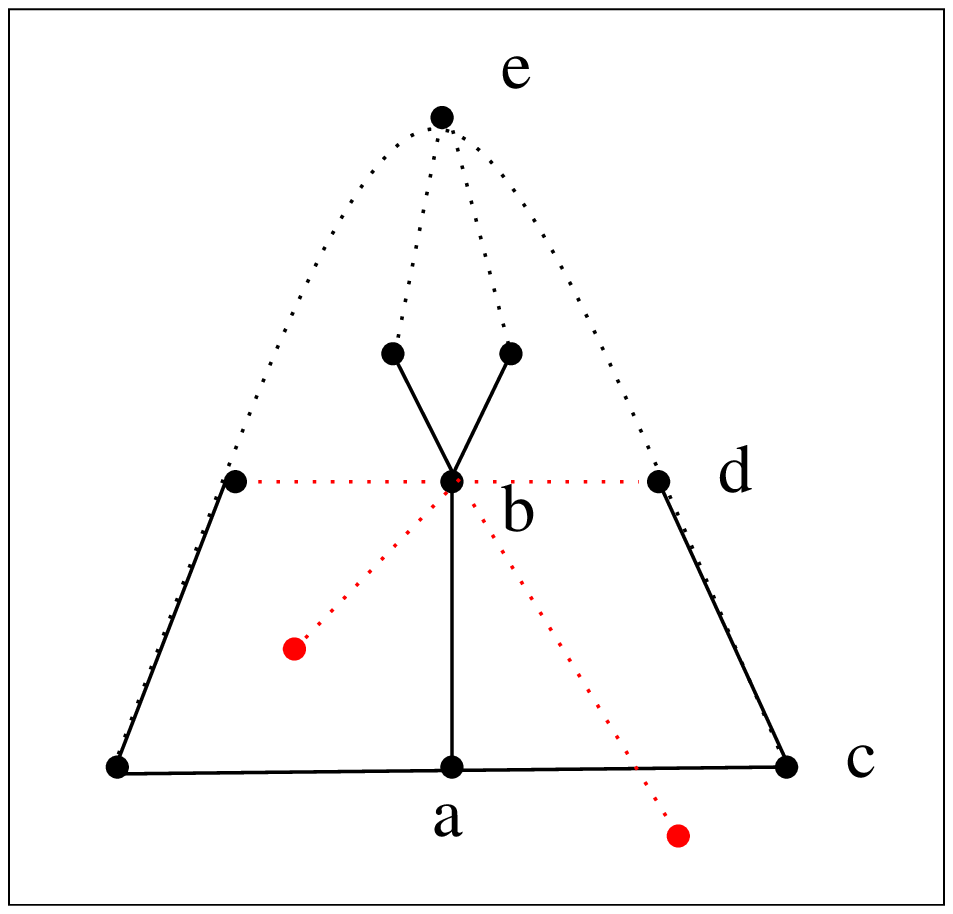}
\hspace{0.3cm}
\includegraphics[width=4.5cm]{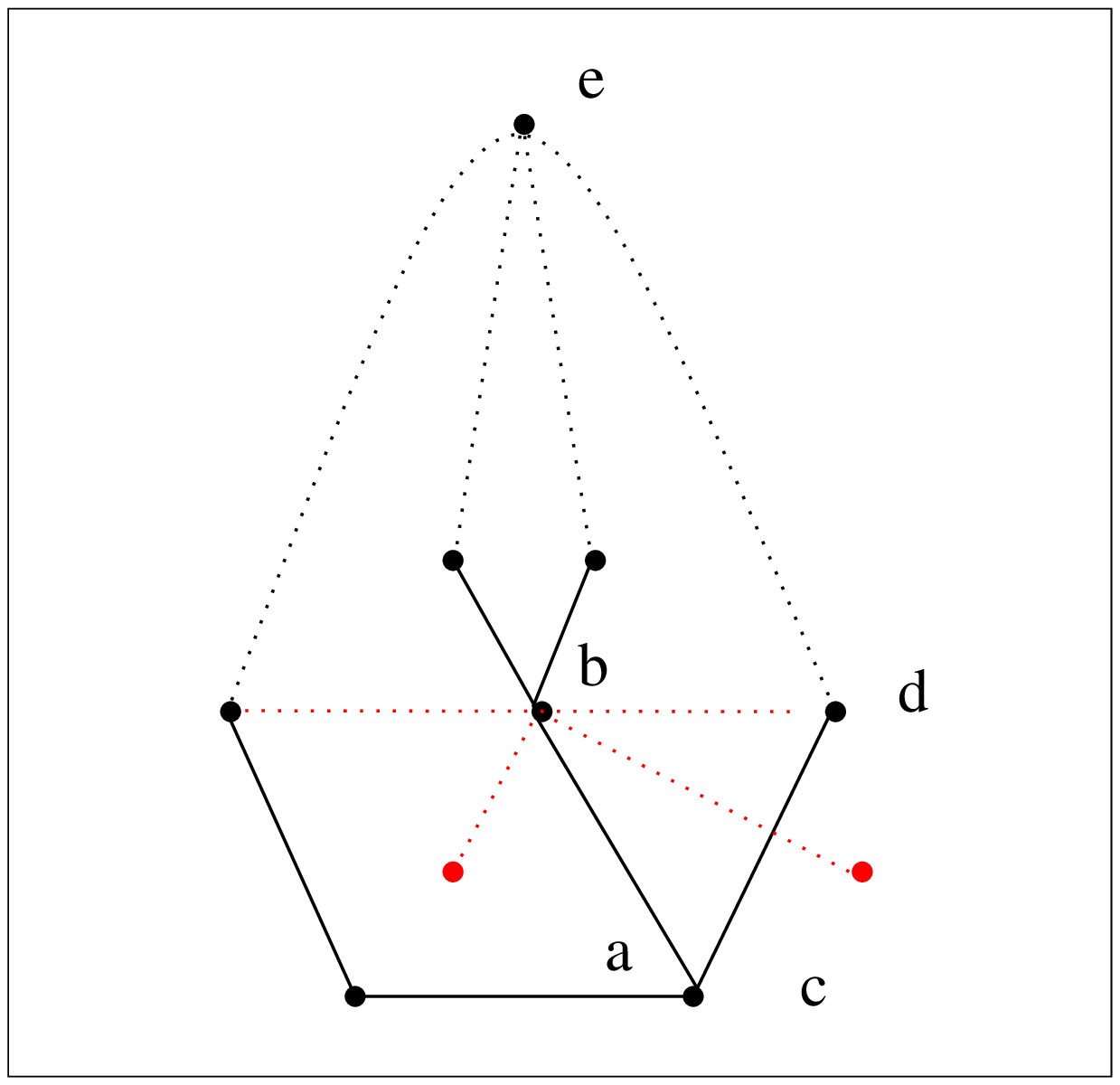}
\end{center}
\caption{{\small Forbidden patterns.
In the left frame it is shown that $S_x\ne\emptyset$
only if the number  of side in each face is odd.
The right and central frames show that the condition
$|S_x|+|P_x|\geq 3$ is possible only if $s=3$.
}}\label{fig:evenodd}
\end{figure}
All together,  properties (iii)-(v)  imply that if $s\geq 4$,
then $S_x +P_x\leq 2$. Thus, for all couples
$(v,s)$ such that $s\geq 4$ and $v\geq5$,
$\H(v,s)$ is a growing graph with parameter $g\leq v-4$.
It can be easily verified that if $s\geq 4$ then $g=v-4$, since $S_x+P_x=2$
for some $x\in V$.
\end{proof}

Together with Theorem \ref{teo}, the above lemma  implies that for all
$(v,s)$ such that $\H(v,s)$ is a growing graph and for all $\b\gg 1$,
 the dynamics on an $n$-vertex ball of $\H(v,s)$  with $(+)$-boundary condition,
has spectral gap uniformly positive in $n$ and mixing time at most linear in $n$.
Comparing this result with the behavior of the dynamics with free boundary condition
\cite{BKMP}, we get a convincible example of the influence
of boundary conditions on the mixing time.

\subsection{Expanders}
In this subsection we prove Theorem \ref{th:esempio}
by providing an explicit example of growing graph
which exhibits the behavior stated in the theorem.

\subsubsection{Construction}
Let us start with some definitions.

Let $G=(V,E)$ be a finite graph. The edge isoperimetric
constant of $G$, defined in (\ref{isoperimetrica2}) for infinite graphs,
is given by
\be\label{def:cheegerfiniteG}
i_e(G):= \min_{\emptyset\ne S\subset V\atop |S|\leq \frac{|V|}{2}}
\left\{\frac{|\partial_E (S)|}{|S|}\right \}\,.
\ee
For $c>0$ and $k,n\in\N$, we then have the following definition:
\begin{definition}
A finite graph $G=(V,E)$ is called an $(n,k,c)$-expander if it is regular with degree $k$,
$|V|= n$,  and $i_e(G)=c$.
\end{definition}
Notice that, since $G$ is finite,  $i_e(G)>0$ if and  only if $G$ is  connected.
In particular, every connected $k$-regular graph on $n$ vertices
is an expander for some $c>0$.
However, one is usually interested in a family of expanders, that is
a sequence of $(n,k,c)$-expander graphs such that $k$ is fixed,
$c\geq \e>0$, and $n\rightarrow\infty$.  That $c\geq\e>0$ ensures
that these graphs are highly connected, or, in other terms,  that
the sequence of expanders converges to an infinite regular nonamenable graph.
It is easy to see that  this  happens only if $k\geq 3$.
On the other hand, for all $k\geq 3$,  a sequence of $(n, k, c)$-expanders
exists, as it has been proved by showing that  $k$-regular random graphs on $n$ vertices
are expanders with probability tending to $1$
as $n\rightarrow\infty$ (probabilistic method).
See \cite{HLW} for a nice survey on expanders and their applications.

Let $T^\D$ denote an infinite rooted tree with constant degree $\D$
and root $o\in V$, and assume that $\D\geq 6$.
Fix an integer $d <\D-2$, and  connect the  vertices
on each level   $L_r=\{x\in V: d(x,o)=r\}$ of the tree
 in such a way that induced subgraph on $L_r$ is an
$(\D(\D-1)^{r-1}, k_r, c_r)$-expander,
with $3\leq k_r\leq d$ and isoperimetric constant $c_r$
uniformly positive in $r$ .
We denote the infinite graph  obtained with this procedure
 by $X^{\D,d}$ (see Fig. \ref{fig:expander}).

\begin{figure}[htb]
\begin{center}
\includegraphics[width=6cm]{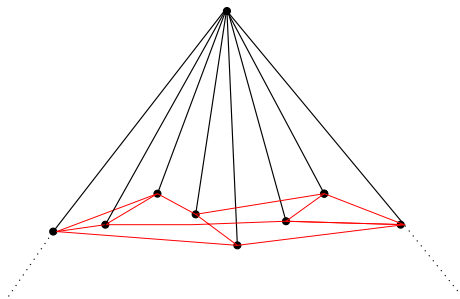}
\end{center}
\caption{{\small  Construction of $X^{8,3}$ to first level.
The red lines correspond to the edges of the expander in $L_1$.}}\label{fig:expander}
\end{figure}

 Notice that $X^{\D,d}$ has degree $D\geq\Delta+d$, and
 by construction it is $(g,o)$-growing graph with $g\geq\D-2-d$.
Moreover,  one can easily realize that the isoperimetric constant
$i_e(B_r)$ of the ball $B_r \subset X^{\D,d}$ centered in $o$,
is uniformly positive  in $r\geq 1$, namely, there is a constant
$\e=\e(\D,d)>0$ such that $i_e(B_r)>\e$  for all $r\in\N$.

\subsubsection{Free boundary dynamics finite balls of $X^{\D,d}$}
Let us consider the Glauber dynamics on the $n$-vertex
ball $B\equiv B_m$ of the graph $X^{\D,d}$ with free boundary condition.
We use the same notation of section \ref{sec:glauber}, but
here $\m$  denotes the Gibbs measure over $\Omega\equiv\Omega_B=\{\pm 1\}^B$
with free boundary condition, that is, for all $\s\in\Omega$,
\be\label{gibbsfreeboundary}
\m(\s)=\frac{1}{Z(\b)}\exp (\b \sum_{(x,y)\in E(B)}\s_x\s_y)\,.
\ee
With this notation, we can state the following result.
\begin{proposition}\label{prop:mixingexp}
Let  $B$ the  $n$-vertex ball of the graph $X^{\D,d}$.
Then, for all $\b\gg 1$, the Glauber dynamics on $B$ with free boundary condition
and zero external field, has spectral gap $O(e^{-\th n})$,
with $\th= \th(\D,d,\b)>0$.
\end{proposition}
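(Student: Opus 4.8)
The plan is to exhibit a bottleneck in configuration space and read off the bound from the variational formula (\ref{cgap}) for $\cg(\m)$. Since the ball $B=B_m\subset X^{\D,d}$ is an expander, with edge isoperimetric constant $i_e(B_m)>\e$ uniformly in $m$, any splitting of its $n$ vertices into two pieces of comparable size must cut $\Omega(n)$ edges; at low temperature this forces the free boundary Gibbs measure $\m$ to concentrate on the ``mostly $+$'' and ``mostly $-$'' regions of $\Omega_B$, which are separated by an exponentially thin neck. I would make this quantitative with the test function $f:=\mathbbm{1}_A$, where $A:=\{\s\in\Omega_B:\sum_{x\in B}\s_x>0\}$, together with the ``neck'' $M:=\{\s:\ |\sum_{x\in B}\s_x|\le 2\}$.

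First I would bound the variance of $f$ from below. Because $h=0$ and the boundary is free, the spin flip $\s\mapsto-\s$ preserves $\m$, so $\m(A)=\m(\{\textstyle\sum_x\s_x<0\})$ and hence $\m(A)\ge\tfrac12\bigl(1-\m(M)\bigr)$ while $\m(A)\le\tfrac12$; once $\m(M)$ is known to be exponentially small, this gives $\v(f)=\m(A)\bigl(1-\m(A)\bigr)\ge c$ for some $c>0$ and all $\b$ large. Next, the Dirichlet form: since $f$ is $\{0,1\}$-valued, $\v_x(f)(\s)=\m_x^\s(A)\bigl(1-\m_x^\s(A)\bigr)\le\tfrac14$, and this vanishes unless flipping $\s_x$ changes membership in $A$, which can happen only when $\s\in M$. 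Hence
$$\cD(f)=\sum_{x\in B}\m\bigl(\v_x(f)\bigr)\le\tfrac n4\,\m(M).$$

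The heart of the argument is the Peierls-type estimate $\m(M)\le e^{-\theta_0 n}$ for some $\theta_0=\theta_0(\e,\b)>0$. Writing $\m(\s)=e^{\b\sum_{(xy)\in E(B)}\s_x\s_y}/Z(\b)$ and bounding $Z(\b)\ge e^{\b|E(B)|}$ by the all-$+$ configuration, note that for $\s\in M$ the plus-set $P=\{x:\s_x=+\}$ satisfies $\min(|P|,n-|P|)\ge(n-2)/2$, so the number of frustrated edges is the size of the edge boundary of $P$ inside $B$, which by the isoperimetric inequality is $\ge i_e(B_m)\min(|P|,n-|P|)\ge \e n/3$ for $n$ large; therefore $\sum_{(xy)}\s_x\s_y\le|E(B)|-\tfrac{2\e n}{3}$ and $\m(\s)\le e^{-2\b\e n/3}$. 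Summing over the at most $2^n$ configurations in $M$ yields $\m(M)\le e^{\,n(\log 2-2\b\e/3)}$, so $\theta_0=\tfrac{2\b\e}{3}-\log 2>0$ whenever $\b>\tfrac{3\log 2}{2\e}$.

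Combining the three bounds, $\cg(\m)\le\cD(f)/\v(f)\le c^{-1}\tfrac n4 e^{-\theta_0 n}=O(e^{-\theta n})$ for any $0<\theta<\theta_0$, which is the claim with $\theta=\theta(\D,d,\b)>0$. The one step needing care is the energy/entropy balance in the estimate on $\m(M)$: the entropy factor $2^n$ is wasteful, but the energetic cost $2\b\e n/3$ forced by the expander property beats it once $\b$ is large, and this is precisely where uniform positivity of $i_e(B_m)$ — rather than mere nonamenability — enters. Finally, feeding Proposition \ref{prop:mixingexp} into Theorem \ref{teo:gapmixingtime} gives $\t_1\ge\cg(\m)^{-1}=\Omega(e^{\theta n})$, and together with the growing property $g\ge\D-2-d$ of $X^{\D,d}$ this establishes Theorem \ref{th:esempio}.
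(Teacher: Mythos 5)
Your proposal is correct and is essentially the paper's own argument: the same test function $\mathbbm{1}_{\{\sum_{x}\s_x>0\}}$, the same observation that the Dirichlet form is supported on configurations of near-zero magnetization, and the same Peierls/isoperimetric estimate in which the energetic cost $\b\e n$ forced by $i_e(B_m)\ge\e$ beats the $2^n$ entropy once $\b$ is large. The only difference is that you handle both parities of $n$ via the neck $\{|\sum_x\s_x|\le 2\}$, whereas the paper restricts to odd $n$ and works with the event $\{m_B=1\}$; this is a cosmetic variation (and arguably cleaner, since $n=|B_m|$ is determined by the graph rather than freely choosable).
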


\begin{proof}
For every $\s\in\Omega$,
let  $m_B(\s):=\sum_{x\in B}\s_x$ be the
magnetization of a configuration $\s$, and define the following
characteristic function:
\be\label{testfunction}
\1_{\{m_B >0\}}(\s)\,=\,\left\{\begin{array}{cc}1 & \mbox{ if } m_B(\s)>0\\
0 & \mbox{ if } m_B(\s)\leq 0 \end{array}\right.\,.
\ee
Without loss of generality we can restrict the analysis to odd  values of $n$.
Since in this case $m_B(\s)\ne 0$ for all $\s$, by symmetry it follows that
$\v (\1_{\{m_B >0\}}) = 1/4$ and then, from (\ref{cgap}),
\be\label{boundgap}
\cg(\m)\leq 4 \mathcal D(\1_{\{m_B>0\}})=
2\sum_{x\in B}\m\left(c_x \,[\nabla_x (\1_{\{m_B
>0\}})]^2 \right) \,.
\ee
Notice that $\nabla_x (\1_{\{m_B >0\}})(\s)\ne 0$ only if
$\s$ is such that $|m_B(\s)|= 1$, namely only if there are $(n+2)/2$ spins
in $\s$ with the same value, and
only if $x$ is one of the  $(n+2)/2$ vertices with spin of the same
sign of $m_B(\s)$.
Then we get
\be\label{bo2}
\mathcal{D}(\1_{\{m_B
>0\}})\,\leq\, \sfrac{n+2}{4}\,\,\m( |m_B|= 1)\,=\,
\sfrac{n+2}{2}\,\,\m( m_B= 1)\,.\ee
For every configuration $\s\in\Omega$, we define the subsets
$$
A^+ (\s)\,:=\,\{x\in B \,\mbox{ s.t.
} \s_x=+\}\quad\mbox{ and }\quad A^- (\s)\,=\,\{x\in B\,\mbox{ s.t.
} \s_x=-\}\,.
$$
Notice that the condition $m_B(\s)=1$ is satisfied if and only if
$|A^+(\s)|= (n+1)/2$ and  $|A^-(\s)|\,=\,(n-1)/2$.
Thus, any configuration $\s$ with  magnetization equal to $1$, is
univocally correspondent to a partition of the vertex set of $B$ into two
subsets $S$ and $T$ of size $(n+1)/2$ and $(n-1)/2$, respectively,
such that $A^+(\s)=S$ and $A^-(\s)=T$.
Let $\mathcal P$ denote the set of these partitions, and define $E(S,T)$
as the set of edges between $S$ and $T$.
Then we have
\bea\label{zeromagn}
\m( m_B =1)&=&
\sum _{\s\,: m_B(\s)=1} \frac{\exp(-\b \sum_{(x,y)\in E(B)}\s_x\s_y)}{Z(\b)}
\nonumber\\&=&
\sum_{(S,T)\in\mathcal P} \frac{\exp(\,\b(|E(S)|+|E(T)|-|E(S,T)|)\,)}{Z(\b)}
\nonumber\\&\leq &
\sum_{(S,T)\in\mathcal P}
\frac{\exp(\,\b(|E(S)|+|E(T)|-|E(S,T)|)\,
)}{\exp(\,\b(|E(S)|+|E(T)|+|E(S,T)|)\,)}\nonumber \\
&=& \sum_{(S,T)\in\mathcal P} \exp(-2\b |E(S,T)|)\,.
\eea
Notice that $|E(S,T)|=|\partial_E T|$.
In particular,  since $B$ has uniformly positive isoperimetric constant,
namely  $i_e(B)\geq \e>0$ for all $n\in\N$,
$$|E(S,T)|\geq \e|T|=\sfrac{\e}{2}(n-1)\,.$$
Continuing from (\ref{zeromagn}), we get
\bea
\m( m_B =1)&\leq&
\sum_{(S,T)\in\mathcal P} e^{-\b \e(n-1)}\nonumber\\
&=&\binom{n}{\frac{n-1}{2}}e^{-\b \e(n-1)}\nonumber\\
&\leq & 2^n e^{-\b \e(n-1)} \nonumber\\
&=& ce^{-(\b\e-\log 2)n}\,,
 \eea
where in the third line we approximated the binomial factor using
the Stirling formula.
Inserting this bound in (\ref{bo2}) we get that, for all
$\b>\frac{\log 2}{\e}$, there exists a
positive  constant $\th=\th(\D,d,\b) $ such that
\be
\mathcal{D}(\1_{\{m_B>0\}})\,\leq\,
\sfrac{n+2}{2}ce^{-(\b\e -\log 2)n}
\leq e^{-\th n},
\ee
which implies, by (\ref{boundgap}), that $ \cg(\m)= O(e^{- \th n} )$.
\end{proof}

Theorem \ref{th:esempio} follows from the above proposition,
since by construction the infinite  graph $X^{\D,d}$ is
growing with parameter $g\geq \D-d-2$.
Putting together Theorems \ref{teo} and Proposition \ref{prop:mixingexp},
we get the first rigorous example in which the mixing time
is at most linear in $n$ for the $(+)$-boundary condition,  while at
least exponential in $n$ for the free boundary condition.

\noindent {\bf Acknowledgements.}
I am grateful to an anonymous referee for very useful
comments and suggestions on earlier draft. I also would like to thank  the
Department of Mathematics of the University of Roma Tre which
offered financial support and a friendly environment during my PhD
studies. I am especially thankful to Fabio Martinelli who introduced
me to this subject and provided useful insights.


\begin{thebibliography}{90}

\bibitem{Al} D. Aldous. {\it Random walks on finite groups and rapidly
mixing Markov chains}.
Seminar on probability, XVII,  243-297,
 Lecture Notes in Math., {\bf 986}, Springer, Berlin, 1983.

\bibitem{Bod} T. Bodineau. {\it Translation invariant Gibbs
states for the Ising model}. Probab. Theory Related Fields {\bf 135} (2006),
no. 2, 153-168.


\bibitem{BD}R. Bubley, M. Dyer. {\it Path Coupling: a technique for proving
rapid mixing in Markov Chains}. Proc. of the 38th Annual IEEE Symposium
on Foundations of Computer Science, 1997, 223-231.

\bibitem{BKMP} N. Berger, C. Kenyon, E. Mossel, Y. Peres.
{\it Glauber dynamics on trees and hyperbolic graphs}. Probab.
Theory Related Fields {\bf 131} (2005), no. 3, 311-340.


\bibitem{BM} T. Bodineau, F. Martinelli. {\it Some new results on the kinetic Ising model
in a pure phase}. J. Statist. Phys. {\bf 109} (2002),  no. 1-2, 207-235.

\bibitem{BS} I. Benjamini, O. Schramm. {\it Percolation beyond $\mathbb{Z}^d$,
many questions and a few answers}. Electron Comm. Probab. {\bf 1}
(1996), no. 8, 71-82.

\bibitem{BS2} I. Benjamini, O. Schramm.
{\it Percolation in the hyperbolic plane}. J. Amer. Math. Soc. {\bf
14} (2001),  no. 2, 487-507.


\bibitem{CCST} J.T. Chayes, L. Chayes, J.P. Sethna, D.J. Thouless. {\it
A mean field spin glass with short-range interactions}.
Commun. Math. Phys. {\bf 106} (1986), no. 1, 41-89.

\bibitem{FH} D. Fisher, D. Huse. {\it Dynamics of droplet fluctuations in pure and random
Ising systems}. Phys. Rev. B  {\bf 35} (1987), no. 13, 6841-6846.

\bibitem{Ge}
H.-O. Georgii. {\it Gibbs measures and Phase Transitions}.
de Gruyter Studies in Mathematics, 9.
Walter de Gruyter \& Co., Berlin, 1988.

\bibitem{HJL}
O. H\"{a}ggstr\"{o}m, J. Jonasson, R. Lyons.
{\it Explicit isoperimetric constants and phase transitions
in the random-cluster model}.  Ann. Probab. {\bf 30} (2002), no. 1,  443-473.

\bibitem{HLW} S. Hoory, N. Linial, A. Wigderson.
{\it Expander graphs and their applications}.
Bull. Amer. Math. Soc. (N.S.) {\bf 43}
(2006), no. 4, 439-561.


\bibitem{Io2} D. Ioffe. {\it Extremality of the disordered state for
the Ising model on general trees}.
 Trees (Versailles, 1995),  3-14, Progr. Probab., {\bf 40},
Birkhäuser, Basel, 1996.

\bibitem{Jo} J. Jonasson. {\it The random cluster model on a general graph
and a phase transition characterization of nonamenability}.
 Stochastic Process. Appl.,  {\bf 79} (1999), no. 2,  335-354.


\bibitem{JS} J. Jonasson, J.E. Steif. {\it Amenability and phase transition
in the Ising model}. J. Theoret. Probab. {\bf 12} (1999), no. 2,  549-559.

\bibitem{Kes} H. Kesten. {\it Percolation theory for mathematicians}.
Progr. Probab. Statist. {\bf 2}. Birkhäuser, Boston, Mass., 1982.

\bibitem{KMP} C. Kenyon, E. Mossel, Y. Peres. {\it Glauber dynamics on trees and hyperbolic graphs}.
  In 42nd IEEE Symposium on Foundations of Computer Science (Las Vegas,
 NV, 2001), IEEE Computer Soc., Los Alamitos, CA, (2001), 568-578.


\bibitem{Lin} T. Lindvall. {\it Lectures on the coupling method}. Wiley Series
in Probability and Mathematical Statistics: Probability
 and Mathematical Statistics. A Wiley-Interscience Publication.
 John Wiley \& Sons, Inc., New York, 1992.

\bibitem{Ly1} R. Lyons. {\it The Ising model and percolation on trees and tree-like graphs}.
Comm. Math. Phys. {\bf 125}  (1989), no. 2, 337-353.

\bibitem{Ly2} R. Lyons. {\it Phase transitions on Nonamenable Graphs}.
J. Math. Phys. {\bf 41} (2000),   no. 3,  1099-1126.

\bibitem{LRS} M. Luby, D. Randall,  A. Sinclair.  {\it
Markov chain algorithms for planar lattice structures}.  SIAM J.
Comput. {\bf 31}  (2001),  no. 1,  167-192.

\bibitem{Mag} W. Magnus. {\it Noneuclidian tesselations and their
groups}. Pure and Applied Mathematics, Vol. 61. Academic Press,
New York-London, 1974.


\bibitem{Mar} F. Martinelli. {\it Lectures on Glauber dynamics
for discrete spin models}. Lectures on probability and statistics
(Saint-Flour, 1997), 93-191,  Lecture Notes in Math., {\bf
1717}, Springer, Berlin, 1999.


\bibitem{MO1} F. Martinelli, E. Olivieri.  {\it Approach to
equilibrium of Glauber dynamics in the one phase region I: The
attractive case}. Comm. Math. Phys. {\bf 161} (1994), no. 3, 447-486.

\bibitem{MO2} F. Martinelli, E. Olivieri.  {\it Approach to
equilibrium of Glauber dynamics in the one phase region II: The
general case}. Comm. Math. Phys. {\bf 161} (1994),  no. 3, 487-514.

\bibitem{MSW} F. Martinelli, A. Sinclair, D. Weitz. {\it Glauber dynamics
on trees: boundary conditions and mixing time}. Comm. Math. Phys.
{\bf 250} (2004), no. 2, 301-334.

\bibitem{RNO} R. Rietman, B. Nienhuis, J. Oitmaa. {\it Ising model on
hyperlattices}. J. Phys. A {\bf 25} (1992),  no. 24, 6577-6592.


\bibitem{Sa} L. Saloff-Coste. {\it Lectures on finite Markov
chains}. Lectures on probability theory and statistics (Saint-Flour, 1996),
301-413,  Lecture Notes in Math., {\bf 1665}, Springer, Berlin,  1997.

\bibitem{Sch} R. H. Schonmann. {\it Multiplicity of Phase Transitions
and mean-field criticality on highly non-amenable graphs}. Comm.
Math. Phys.  {\bf 219} (2001),  no. 2, 271-322.

\bibitem{SS}
C. M. Series, Ya. G. Sinai. {\it Ising models on the Lobachevsky
plane}. Comm. Math. Phys.  {\bf 128} (1990), no. 1, 63-76.

\bibitem{SZ} D.W. Stroock, B. Zegarlinski. {\it The
logarithmic Sobolev inequality for discrete spin systems on a
lattice}. Comm. Math. Phys. {\bf 149} (1992), no. 1, 175-194.

\bibitem{Wei} D. Weitz. {\it Combinatorial criteria for uniqueness
of the Gibbs measure}. Random Structures Algorithms {\bf 27 }
 (2005), no. 4, 445-475.


\bibitem{Wu} C. C. Wu.  {\it Ising models on Hyperbolic Graphs}.
J. Stat. Phys. {\bf 85} (1997),  no. 1-2, 251-259.


\bibitem{Wu2} C. C. Wu.  {\it Ising models on Hyperbolic Graphs II}.
J. Stat. Phys. {\bf 100}  (2000),  no. 5-6, 893-904.


\end{thebibliography}
\end{document}